\newcommand{\Z}{\ensuremath{\mathbb{Z}}} 
\newcommand{\R}{\ensuremath{\mathbb{R}}} 
\newcommand{\Core}{\ensuremath{\mathcal{C}}} 
\newcommand{\clop}[1]{\ensuremath{[\![#1]\!]}} 
\newcommand{\bd}{\ensuremath{\partial}} 
\newcommand{\iv}{\ensuremath{o}} 
\newcommand{\tv}{\ensuremath{t}} 
\newcommand{\uc}{\widetilde} 
\newcommand{\co}{\colon\thinspace} 
\newcommand{\CV}{\ensuremath{\mathcal{CV}_k}} 
\newcommand{\cv}{\ensuremath{cv_k}} 
\newcommand{\lambdap}{\mu}
\newcommand{\inv}{^{-1}}
\theoremstyle{plain}
\newtheorem{intro-theorem}{Theorem}
\newtheorem{theorem}{Theorem}[section]
\newtheorem{lemma}[theorem]{Lemma}
\newtheorem{proposition}[theorem]{Proposition}
\newtheorem*{prop-estimate}{Proposition \ref{prop:estimate}}
\newtheorem{corollary}[theorem]{Corollary}
\newtheorem{intro-corollary}[intro-theorem]{Corollary}
\theoremstyle{definition}
\newtheorem{definition}[theorem]{Definition}
\newtheorem{notation}[theorem]{Notation}
\newtheorem{example}[theorem]{Example}
\newtheorem{remark}[theorem]{Remark}
\newtheorem{intro-remark}[intro-theorem]{Remark}
\newtheorem{convention}[theorem]{Convention}
\DeclareMathOperator{\Aut}{Aut}
\DeclareMathOperator{\Out}{Out}
\DeclareMathOperator{\GL}{GL}
\DeclareMathOperator{\vol}{vol}
\begin{document}


\title{Growth of intersection numbers for free group automorphisms}
\author[J~.Behrstock]{Jason Behrstock}
\address{Dept.\ of Mathematics \\
   Lehman College\\
   Bronx, NY 10468}
\email{jason.behrstock@lehman.cuny.edu}

\author[M.~Bestvina]{Mladen Bestvina}
\address{Dept.\ of Mathematics \\
   University of Utah\\
   155 South 1400 East, JWB 233 \\
   Salt Lake City, UT 84102}
\email{bestvina@math.utah.edu}

\author[M.~Clay]{Matt Clay}
\address{Dept.\ of Mathematics\\
  University of Oklahoma\\
  Norman, OK 73019} 
\email{mclay@math.ou.edu}

\date{\today}
\thanks{Partially supported by NSF grants DMS-0812513 and DMS-0502441.}

\begin{abstract}
For a fully irreducible automorphism $\phi$ of the free group $F_k$ we
compute the asymptotics of the intersection number $n\mapsto
i(T,T'\phi^n)$ for trees $T,T'$ in Outer space. We also obtain
qualitative information about the geometry of the Guirardel core for
the trees $T$ and $T'\phi^n$ for $n$ large.
\end{abstract}

\maketitle


\section*{Introduction}\label{sc:intro}

Parallels between $\GL_n(\Z)$, the mapping class group,
$\mathcal{MCG}(\Sigma)$, and the outer automorphism group of a free
group, $\Out(F_k)$, drive much of the current research of these groups
and is particularly fruitful in the case of $\Out(F_k)$. The article
\cite{col:BV06} lists many similarities between these groups and uses
known results in one category to generate questions in another.  A
significant example of this pedagogy is the question of the existence
of a complex useful for studying the large scale geometry of
$\Out(F_k)$ analogous to the spherical Tits building for $\GL_n(\Z)$
or Harvey's curve complex \cite{harvey:boundary} for the mapping class group.

The curve complex is a simplicial complex whose vertices correspond to
homotopy classes of essential simple closed curves and whose simplices
encode when curves can be realized disjointly on the surface. The
curve complex has played a large role in the study of the mapping
class group; one of the first major results was the computation of its
homotopy type and its consequences for homological stability,
dimension and duality properties of the mapping class group
\cite{Harer,Harer2}. Another fundamental result is that the
automorphism group of the curve complex is the (full) mapping class
group (except for some small complexity cases)
\cite{ivanov:complexes2,korkmaz:complex,luo:complex}. The curve
complex has also played an important role in understanding the large
scale geometry of the mapping class group \cite{BKMM}, a key property
there is that the curve complex is Gromov hyperbolic
\cite{masurminsky:complex1}.

The situation with $\Out(F_k)$ seems to be much more complicated and
an emphasis on a particular feature of the curve complex leads to a
different analog.  A discussion of some of these analogs and their
basic properties is provided in \cite{KapovichLustig}. Without much
doubt, a construction of such a complex and a proof of its
hyperbolicity is the central question in the study of $\Out(F_k)$
today. In this introduction we will feature three candidate complexes.

Recall that an (outer) automorphism of $F_k$ is {\it fully
  irreducible} (sometimes called {\it irreducible with irreducible
  powers (iwip)}) if no conjugacy class of a proper free factor is
periodic. These automorphisms are analogous to pseudo-Anosov
homeomorphisms in mapping class groups.  Further recall that Culler
and Vogtmann's Outer space, $\CV$, is the space of minimal free
simplicial (metric) $F_k$--trees normalized such that the volume of
the quotient graph is 1 \cite{ar:CV86}.  We consider the
unprojectivized version as well, $\cv$.

\medskip
\noindent {\bf The complex of free factors of a free group.} An
$n$-simplex in this complex is a chain $FF_0<FF_1<\cdots<FF_n$ of
nontrivial proper free factors in $F_k$ modulo simultaneous
conjugacy. Hatcher and Vogtmann showed that this complex has the
homotopy type of a bouquet of spheres, a result which is analogous to
that of the spherical Tits building \cite{ar:HV98} and of the curve
complex \cite{Harer:MCGhomologystability}.  By analogy with the curve
complex situation where pseudo-Anosov homeomorphisms have unbounded
orbits and other homeomorphisms have bounded orbits, Kapovich and
Lustig have shown that fully irreducible automorphisms act with
unbounded orbits and other automorphisms with bounded orbits
\cite{KapovichLustig}. Kapovich and Lustig proved their result via a
notion of intersection number using geodesic currents analogous to a
construction of Bonahon's in the surface case \cite{Bonahon:ends}.

\medskip
\noindent {\bf The complex of (connected) subgraphs.} A vertex is a
nontrivial proper free factor of $F_k$ modulo conjugacy. A collection
$FF_i$ of such free factors spans a simplex if they are compatible:
there is a filtered graph $G_0\subset G_1\subset\cdots\subset G_m=G$
representing $F_k$ so that each $FF_i$ is represented by a connected
component of some $G_i$. Just like the collection of very short curves
in a hyperbolic surface (if nonempty) determines a simplex in the
curve complex, leading to the homotopy equivalence between the thin
part of Teichm\"uller space and the curve complex, so does the core of
the union of very short edges of a marked metric graph (if nonempty)
determine a simplex in the complex of subgraphs, leading to the
homotopy equivalence between this complex and the thin part of Outer
space. There is a natural embedding of the free factor complex into
the subgraph complex (the vertex sets are equal) and this embedding is
a quasi-isometry.

\medskip \noindent {\bf The splitting complex.} This complex is a
refinement of the complex of free factors, where one also provides a
complementary free factor. More precisely, this is the simplicial
complex whose vertex set is the set of free product decompositions
$F_k = A*B$ (modulo simultaneous conjugation and switching of the
factors), where $n+1$ free product decompositions span an $n$--simplex
if they are pairwise \emph{compatible}. Two free product
decompositions $A*B$ and $A'*B'$ are compatible if there is a two-edge
graph of groups decomposition $F_k = X*Y*Z$ such that collapsing one
edge yields the decomposition $A*B$ and collapsing the other edge
yields the decomposition $A'*B'$. The motivation for studying this
complex comes from the observation that an essential simple closed
curve on a surface determines a splitting of the fundamental group
over $\Z$ (and not just one of the factors). Moreover, as we explain
below, there is a hope that a proof of hyperbolicity of the curve
complex generalizes to the splitting complex.

Scott and Swarup have shown that compatibility of $A*B$ and $A'*B'$
can be interpreted as the vanishing of an intersection number
$i(A*B,A'*B')$ \cite{ar:SS00}.  This number $i(-,-)$ is defined for
any two splittings, either as an amalgamated free product or as an
HNN-extension, of a finitely generated group.  When the group is the
fundamental group of a surface and the splittings arise from simple
closed curves on the surface, this intersection number agrees with the
geometric intersection between the two curves.

Guirardel, incorporating work of \cite{un:CLS} and \cite{ar:FP06},
generalized Scott's intersection number to the setting of $G$--trees
\cite{ar:G05}.  More importantly, given two $G$--trees $T,T'$,
Guirardel constructed a \emph{core} $\Core(T \times T') \subset T
\times T'$.  This core specifies a geometry for the pair of splittings
in the sense that it is CAT(0) and it is equipped with two patterns
respectively representing the splittings such that the geometric
intersection number between the patterns in the quotient $\Core(T
\times T')/G$ is the intersection number $i(T,T')$.  When the group is
the fundamental group of a closed surface and the splittings arise
from filling simple closed curves, the quotient $\Core(T \times T')/G$
is the same surface endowed with the singular euclidean structure used
by Bowditch in his proof of hyperbolicity of the curve complex
\cite{ar:B06}. It seems promising that following a careful
understanding of the geometry of the core for free product
decompositions of $F_k$, Bowditch's proof of the hyperbolicity of the
curve complex may also show hyperbolicity of the splitting complex. In
fact, the goal of this paper is much more modest. Instead of
attempting to understand the geometry of the core for a pair of free
splittings we restrict ourselves to two points in Outer space, and the
two points differ by a high power of a fully irreducible automorphism.

One of the main differences between the mapping class group and
$\Out(F_k)$ is the inherent asymmetry present in $\Out(F_k)$. This
difference arises as a mapping class is represented by a homeomorphism
of a surface, a symmetric object, whereas in general an outer
automorphism of $F_k$ is not represented by a homeomorphism of a graph
but merely a homotopy equivalence, which has no symmetry imposed. The
asymmetry is most easily seen in \emph{expansion factors}.  A
pseudo-Anosov homeomorphism of a surface has two measured foliations,
a stable and unstable foliation, and an expansion factor $\lambda$
such that the homeomorphism multiplies the measure on the unstable
foliation by the factor $\lambda$ and the measure on the stable
foliation by the factor $\lambda\inv$ \cite{ar:T88}.  For the inverse
of the homeomorphism, the role of the foliations change place and the
expansion factor $\lambda$ remains the same.  For a fully irreducible
automorphism there is a homotopy equivalence of metric graphs
$\sigma\co \Gamma \to \Gamma$ representing the automorphism that maps
vertices to vertices and linearly expands each edge of $\Gamma$ by the
same number $\lambda$, known as the \emph{expansion factor}, and all
positive powers of $\sigma$ are locally injective on the interior of
every edge (such maps are called {\it train-track maps}
\cite{ar:BH92}). However, unlike the surface case, there is no reason
why the expansion factor for an automorphism to equal the expansion
factor of its inverse.  Indeed the following automorphism and it
inverse provide an example where the two expansion factors are not
equal:
\begin{align*}
  & a \mapsto b & & a \mapsto cA \\
  \phi\co & b \mapsto c &  \phi\inv\co & b \mapsto a \\
  & c \mapsto ab & & c \mapsto b
\end{align*}
The expansion factor for $\phi$ is approximately 1.32 and the
expansion factor for $\phi\inv$ is approximately 1.46.  We remark that
Handel and Mosher have shown that the ratio between the logarithms of
the expansion factors is bounded by a constant depending on $k$
\cite{ar:HM07}.

Intersection numbers for $G$--trees, as intersection numbers for
curves on a surface, are symmetric (first proved by Scott in
\cite{ar:S98}, obvious for Guirardel's construction).  They are also
invariant under automorphisms: $i(T,T') = i(T\phi,T'\phi)$ for any
$G$--trees $T$ and $T'$ and an automorphism $\phi$ of $G$ (compare to
$i(\alpha,\beta) = i(\psi(\alpha),\psi(\beta))$ for any curves
$\alpha,\beta$ on a surface and a mapping class $\psi$.).  This
imposes a symmetry on free group automorphisms.  In particular, for
any $F_k$--tree $T$ and $\phi \in \Out(F_k)$ one has $i(T,T\phi^n) =
i(T\phi^{-n},T) = i(T,T\phi^{-n})$ for all $n$.  This naturally leads
one to inquire about the asymptotic behavior of the function $n
\mapsto i(T,T'\phi^n)$.

In the surface setting, for a pseudo-Anosov homeomorphism $\psi$ and
any curves $\alpha$ and $\beta$ the function $n \mapsto
i(\alpha,\psi^n(\beta))$ behaves like $n \mapsto \lambda^n$ where
$\lambda$ is the expansion factor of $\psi$ \cite{ar:T88}.  This leads
one to first guess that for a fully irreducible automorphism $\phi$
and any two $F_k$--trees $T,T'$ the function $i(T,T'\phi^n)$ also
behaves like $\lambda^n$ where $\lambda$ is the expansion factor of
$\phi$.  But as stated above, this cannot possibly be true in general
since the asymptotics of $i(T,T'\phi^n)$ and $i(T,T'\phi^{-n})$ are
the same but the expansion factors of $\phi$ and $\phi^{-1}$ need not
be the same.

To state the correct result we remind the reader about the
\emph{stable tree} associated to a fully irreducible automorphism. For
a fully irreducible automorphism $\phi \in \Out(F_k)$ and any tree $T
\in \CV$ the sequence $T\phi^n$ has a well-defined limit in the
compactification of Outer space called the stable tree
\cite{ar:BFH97}.  The stable tree is a nonsimplicial (projectivized)
$\R$--tree.  It is \emph{geometric} if it is dual to a measured
foliation on a 2--complex \cite{un:BF,ar:LP97}.  Geometricity of the
stable tree is characterized by a train-track representative for
$\phi$ \cite{un:BF}; this characterization appears as Theorem
\ref{th:geo-nongeo}.  For two functions $f,g\co \R \to \R$ we write $f
\sim g$ to mean that there are constants $K,C$ such that
$\frac{1}{K}f(x) - C \leq g(x) \leq Kf(x) + C$.

\begin{intro-theorem}\label{th:growth}
  Suppose $\phi \in \Out(F_{k})$ is fully irreducible with expansion
  factor $\lambda$ and $T,T' \in \cv$.  Let $T^+$ be the stable tree
  for $\phi$ and let $\lambdap$ be the expansion factor of
  $\phi^{-1}$.
 
\begin{itemize}
	
\item[1.]  If $T^+$ is geometric, then $i(T,T'\phi^n)
  \sim \lambda^n$; else
  
\item[2.]  if $T^+$ is nongeometric, then $i(T,T'\phi^n) \sim
  \lambda^n + \lambda^{n-1}\lambdap + \cdots + \lambda\lambdap^{n-1} +
  \lambdap^n$.
\end{itemize}
\end{intro-theorem}

\begin{intro-remark}\label{rm:growth}
  Case 2 in Theorem \ref{th:growth} can be simplified in two ways
  depending on whether or not $\lambda = \lambdap$.  If $\lambda \neq
  \lambdap$ then $i(T,T'\phi^n) \sim \max\{ \lambda,\lambdap\}^n$.  If
  $\lambda = \lambdap$ then $i(T,T'\phi^n) \sim n\lambda^n$.
\end{intro-remark}

As a corollary we get a statement about the expansion factors for
certain fully irreducible automorphisms.  This corollary was first
proved by Handel and Mosher.

\begin{intro-corollary}[\cite{un:HM}]\label{co:HM}
  Suppose $\phi \in \Out(F_{k})$ is fully irreducible with expansion
  factor $\lambda$.  Let $T^+,T^-$ be the stable trees for
  $\phi,\phi\inv$ respectively and let $\lambdap$ be the expansion
  factor of $\phi^{-1}$.  If $T^+$ is geometric and $T^-$ is
  nongeometric then $\lambda > \lambdap$.
\end{intro-corollary}

\begin{proof}
  If $\phi$ is as in the hypotheses of the corollary then applying
  Theorem \ref{th:growth} we get: $\lambda^n \sim \lambda^n +
  \lambda^{n-1}\lambdap + \cdots + \lambda\lambdap^{n-1} +
  \lambdap^n$. Therefore $\lambda > \lambdap$.
\end{proof}

By \cite[Corollary 9.3]{ar:G05} or \cite[Corollary 3]{un:HM}, if both
$T^+$ and $T^-$ are geometric then $\phi$ is realized by a
homeomorphism of a punctured surface.  Therefore, the automorphism
$\phi$ appearing in Corollary \ref{co:HM} is a \emph{parageometric}
automorphism, i.e.~an automorphism with a geometric stable tree that
is not realized by a homeomorphism of a punctured surface.

\begin{intro-remark}
In view of this discussion, Theorem \ref{th:growth} can be more simply
stated as saying that:
\begin{itemize}
\item[1.] $i(T,T'\phi^n)\sim \max\{\lambda,\lambdap\}^n$ if
  $\lambda\neq\lambdap$ or if $\phi$ is realized on a surface,
\item[2.] $i(T,T'\phi^n)\sim n\lambda^n$ if $\lambda=\lambdap$ and
  $\phi$ is not realized on a surface.
\end{itemize}
\end{intro-remark}

We briefly outline the rest of the paper.  In Section \ref{sc:core} we
collect all of the necessary properties of Guirardel's core that we
need.  Our first step toward Theorem \ref{th:growth} is showing that
the asymptotics of $i(T,T'\phi^n)$ do not depend on the trees $T,T'
\in \cv$ in Section \ref{sc:build}.  Following this, in Section
\ref{sc:slice} we derive an algorithm for computing Guirardel's core
in terms of the \emph{map on ends} associated to an automorphism and
present an example.  In Section \ref{sc:consolidation}, we further
refine our algorithm to show that $i(T,T'\phi^n)$ is comparable to the
subtree spanned by $(f^n)\inv(p)$ where $f\co T \to T$ is a lift of a
special representative for $\phi$ called a \emph{train-track
  representative} (Proposition \ref{prop:estimate}).  The appearance
of the inverse of the train-track map explains the appearance of both
expansion factors in the second case of Theorem \ref{th:growth}.
Further in Section \ref{sc:consolidation} we give a quick proof
showing that $i(T,T'\phi^n)$ is asymptotically bounded below by
$\max\{\lambda,\lambdap\}^n$.

The rest of the proof of Theorem \ref{th:growth} follows from the
analysis in Section \ref{sc:growth} of the subtree $T^n_e$ in
Proposition \ref{prop:estimate}.  There are two cases depending on
whether the stable tree $T^+$ is geometric (Propositions
\ref{prop:ngvolslice} and \ref{prop:geo-vp}).  Putting together the
three aforementioned propositions, we get Theorem \ref{th:growth}.
Finally we present examples of the core for both an automorphism with
a geometric stable tree and an automorphism with a nongeometric stable
tree highlighting the difference.

We conclude our introduction by recalling some standard definitions.

\medskip
\noindent {\bf $\mathbb{R}$--trees:} An \textit{$\R$--tree} $T$ (or
simply \textit{tree}) is a metric space such that any two points $x,y
\in T$ are the endpoints of a unique arc and this arc is isometric to
an interval of $\R$ of length $d_T(x,y)$.  In particular every
$\R$--tree has a Lebesgue measure.  A point $p$ is called a
\textit{branch point} if the number of connected components of $T
\setminus \{p\}$ is greater than 2.  An \R--tree is a simplicial tree
if the set of branch points is discrete.  In this case, we denote the
originating and terminating vertices of an oriented edge $e \subset T$
by $\iv(e)$ and $\tv(e)$ respectively and the edge with reverse
orientation by $\bar{e}$.  A $G$--tree is a tree with an isometric
action of $G$.  We identify two $G$--trees if there is a
$G$--equivariant isometry between them.  A $G$--tree is
\textit{trivial} if there exists a global fixed point.  We will always
assume that $G$ is finitely generated and that $G$--trees are minimal
(no proper invariant subtree).

We will briefly recall the definition of the boundary of a tree $T$.
A \textit{ray} is an isometry $R\co [0,\infty) \to T$.  Two rays are
equivalent if their images lie in a bounded neighborhood of one
another; an equivalence class of rays is called an \textit{end}.  The
equivalence class of the ray $R$ is denoted $R_\infty$.  A
\textit{geodesic} is an isometry $\rho\co(-\infty,\infty) \to T$ to
which are associated two ends denoted $\rho_\infty$ and
$\rho_{-\infty}$.  We will often identify a ray or geodesic with its
image in $T$.  The \textit{boundary} of $T$, denoted $\bd T$, is the
set of ends.  If a basepoint $p \in T$ is fixed, the set of ends can
be identified with the set of rays that originate at $p$.

For a $G$--tree $T$ it is well known that every element $g \in G$
either fixes a point in $T$ (elliptic) or else it has an axis $A_{g}$
(a set isometric to \R) on which it acts by translation (hyperbolic).
The set of rays $R \subset A_{g}$ such that $gR \subset R$
defines a unique point $\omega_{T}(g) \in \bd T$.

\medskip \noindent {\bf Morphism in $\cv$:} For trees $T,T' \in \cv$,
by \emph{morphism} $f\co T \to T'$ we mean an equivariant cellular map
that linearly expands every edge of $T$ over a tight edge path in
$T'$.  This definition of morphism differs from the usual definition
for $\R$--trees.  The notion of \emph{bounded cancellation} arises
frequently when dealing with automorphisms of free groups.  There are
many statements of bounded cancellation, the one of importance to us
in this section is that any morphism $f\co T \to T'$ is a
\emph{quasi-isometry}.  In particular, there exist positive constants
$K,C$ such that $\frac{1}{K}d_T(x,y) - C \leq d_{T'}(f(x),f(y)) \leq
Kd_T(x,y) + C$.  The original statement of bounded cancellation, along
with a proof can be found in \cite{ar:C87}.  A morphism $f\co T \to
T'$ descends to a (linear) Lipschitz homotopy equivalence $\sigma\co
\Gamma \to \Gamma'$ where $\Gamma = T/F_k$ and $\Gamma' = T'/F_k$.
Also, a morphism $f\co T \to T'$ induces an equivariant homeomorphism
$f_\infty\co \bd T \to \bd T'$ called the \emph{map on ends}.

\medskip \noindent {\bf Notation:} The edge path obtained by
tightening an edge path $\alpha$ relative to its endpoints is denoted
by $[\alpha]$ and the concatenation of two edge paths $\alpha$ and
$\beta$ is denoted $\alpha \cdot \beta$.

\medskip \noindent {\bf Train-track representatives:} We recall the
basics of train-tracks, see \cite{ar:BH92} for proofs.  For a metric
graph $\Gamma$, a cellular homotopy equivalence $\sigma\co\Gamma \to
\Gamma$ that is linear on edges is a \emph{train-track map} if there
is a collection $\mathcal{L}$ of unordered pairs of distinct germs of
adjacent edges such that
\begin{itemize}
\item[1.] $\mathcal{L}$ is closed under the action of $\sigma$ and
\item[2.] for an edge $e \subset \Gamma$, any pair of germs crossed by
  $\sigma(e)$ is in $\mathcal{L}$.
\end{itemize}
The unordered pairs in $\mathcal{L}$ are called \emph{legal turns}, an
unordered pair of distinct germs of adjacent edges not in
$\mathcal{L}$ is called an \emph{illegal turn}.  An edge path is
\emph{legal} if it only crosses legal turns.  There is a metric on
$\Gamma$ such that $\sigma$ linearly expands each edge of $\Gamma$ by
the same factor $\lambda$, called the \emph{expansion factor}.  This
factor is the Perron--Frobenious eigenvalue of the transition matrix
for $\sigma$, a positive eigenvector for this eigenvalue specifies the
metric on $\Gamma$.  Bounded cancellation implies that there is a
bound on the amount of cancellation when tightening $\sigma(\alpha
\cdot \beta)$ where $\alpha$ and $\beta$ are legal paths.  We denote
the optimal constant by $BCC(\sigma)$.  As such, when $\alpha,\beta$
and $\gamma$ are legal paths, if $length(\beta) >
\frac{2BCC(\sigma)}{\lambda - 1}$ then the length of $[\sigma^n(\alpha
\cdot \beta \cdot \gamma)]$ goes to infinity as $n \to \infty$.  The
number $\frac{2BCC(\sigma)}{\lambda -1}$ is called the \emph{critical
  constant} for the map $\sigma$.

\medskip \noindent {\bf Nielsen paths:} A {\it Nielsen path} $\gamma$
is a tight path with $[\sigma(\gamma)]=\gamma$. A tight path $\gamma$
is a {\it periodic Nielsen path} if $[\sigma^n(\gamma)]=\gamma$ for
some $n>0$ and $[\sigma^i(\gamma)]$, $i=0,1,\cdots,n-1$ is an {\it
  orbit} of (periodic) Nielsen paths. In this paper we always consider
periodic Nielsen paths and for convenience usually omit the adjective
``periodic''. A Nielsen path is {\it indivisible} if it is not a
concatenation of nontrivial Nielsen subpaths, and similarly for orbits
of Nielsen paths. An indivisible Nielsen path (or iNp) always has a
unique illegal turn and the two legal pieces have equal lengths.

\bigskip \noindent {\bf Acknowledgements.}  The authors would like to
thank the referee for suggestions which improved this work.


\section{The Guirardel core}\label{sc:core}

In this section we describe Guirardel's construction of a core
$\Core(T \times T') \subset T \times T'$ for two $G$--trees $T$ and
$T'$.  In addition, we introduce some notation and state some of the
basic properties of cores needed for the following.  Roughly speaking,
the core is the essential part of $T \times T'$ in terms of the
diagonal action of $G$ on the product $T \times T'$.  The following
definitions and remarks appear in \cite{ar:G05}.

\begin{definition}\label{def:direction}
  Let $T$ be a tree and $p$ a point in $T$.  A \textit{direction at
    $p$} is a connected component of $T - \{p\}$.  If $T$ is
  simplicial and $e$ is an oriented edge of $T$, we will use the
  notation $\delta_{e}$ to denote the direction at $\iv(e)$ that
  contains $e$.  Given two trees $T$ and $T'$, a \textit{quadrant} is
  a product $\delta \times \delta'$ where $\delta \subset T$ and
  $\delta' \subset T'$ are directions.
\end{definition}

For a direction $\delta \subset T$, let $\delta_{\infty} \subset \bd
T$ denote the set of ends determined by rays contained in $\delta$.

\begin{definition}\label{def:heavy}
  Let $T$ and $T'$ be $G$--trees and fix a basepoint $(p,p') \in T
  \times T'$.  A quadrant $\delta \times \delta' \subset T \times T'$
  is \textit{heavy} if there exists a sequence of elements $g_i \in G$
  such that:
  \begin{itemize}

  \item[1.]  $g_i (p,p') \in \delta \times \delta'$; and
 
  \item[2.] $d_T(p,g_i p) \to \infty$ and $d_{T'}(p',g_i p') \to
    \infty$ as $i \to \infty$.

  \end{itemize}
  If the quadrant is not heavy, it is \textit{light}.
\end{definition}

\begin{remark}\label{rm:basepoint} 
  The choice of basepoint is irrelevant.  In particular, if the
  intersection of a quadrant $\delta \times \delta'$ with the orbit of
  any point is a bounded set (or more generally has a bounded
  projection), then $\delta \times \delta'$ is light.
\end{remark}

\begin{remark}\label{rm:inclusion}
  Suppose we have an inclusion of quadrants $\delta \times \delta'
  \subseteq \eta \times \eta' \subset T \times T'$.  If $\delta \times
  \delta'$ is heavy, then $g_{i}(p,p') \in \delta \times \delta'
  \subseteq \eta \times \eta'$ for some sequence of elements $g_{i}
  \in G$.  As the second condition of Definition \ref{def:heavy} does
  not depend on the quadrant, we see that the quadrant $\eta \times
  \eta'$ is heavy.
\end{remark}

We can now define Guirardel's core.  It is the part of $T \times T'$
that is not in any light quadrant.

\begin{definition}[\textit{The Guirardel Core}]\label{def:core}
  Let $T$ and $T'$ be minimal $G$--trees.  Let $\mathcal{L}(T \times
  T')$ be the collection of light quadrants in the product $T \times
  T'$.  The \textit{core} $\Core(T \times T')$ is defined as:
  \begin{equation*}\label{eq:core}
    \Core(T \times T') = T \times T' - \bigcup_{\delta \times
      \delta' \in \mathcal{L}(T \times T')} \delta \times \delta'.
  \end{equation*}
\end{definition}

Since the collection $\mathcal{L}(T \times T')$ of light quadrants is
invariant with respect to the diagonal action of $G$ on $T \times T'$,
the group $G$ acts on the core $\Core(T \times T')$.  Guirardel
defines the \textit{intersection number} between two $G$--trees $T$
and $T'$, denoted $i(T,T')$, as the volume of $\Core(T \times T')/G$.
The measure on $\Core(T \times T')$ is induced from the product
Lebesgue measure on $T \times T'$.  If $T$ and $T'$ are simplicial
then the intersection number $i(T,T')$ is the sum of the areas of the
2--cells in $\Core(T \times T')/G$.

Guirardel shows that this intersection number agrees with the usual
notion of intersection number for simple closed curves on a surface
when $T$ and $T'$ are the Bass--Serre trees for the splitting of the
surface group associated to the curves.  Also, Guirardel shows that
this intersection number agrees with Scott's definition of
intersection number for splittings \cite{ar:S98} and relates the core
$\Core(T \times T')$ to various other constructions, see \cite{ar:G05}
for references and further motivation for this definition.

We now state several properties of the core proved by Guirardel that
we need for the following.  

\begin{proposition}[Proposition 2.6 \cite{ar:G05}]\label{prop:convex}
  Let $T$ and $T'$ be $G$--trees.  The core $\Core(T \times T')$ has
  convex fibers, i.e. the sets $\Core(T \times T') \cap \{x\} \times
  T'$ and $\Core(T \times T') \cap T \times \{x'\}$ are connected
  (possibly empty) for any $x \in T$ and $x' \in T'$.
\end{proposition}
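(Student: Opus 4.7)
My plan is to prove convexity of the fiber $\Core(T\times T')\cap(\{x\}\times T')$ by direct contradiction; the other fiber type is handled symmetrically. Fix $x\in T$ and suppose $(x,y_1)$ and $(x,y_2)$ both lie in $\Core(T\times T')$. Take an arbitrary point $y$ on the arc $[y_1,y_2]\subset T'$ and assume, for contradiction, that $(x,y)\notin\Core(T\times T')$. Then, by Definition \ref{def:core}, $(x,y)$ belongs to some light quadrant $\delta\times\delta'$, where $\delta$ is a direction at a point $p\in T$ and $\delta'$ is a direction at a point $q\in T'$. In particular $x\in\delta$ and $y\in\delta'$, and the task becomes to derive a contradiction from the positions of $y_1,y_2$ relative to $\delta'$.

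The key geometric input is a small point-set fact about $\R$--trees: although the direction $\delta'$ itself is an open convex subset of $T'$, its complement $T'\setminus\delta'$ is likewise convex. Indeed, $T'\setminus\delta'$ is the union of $\{q\}$ with all other components of $T'\setminus\{q\}$, and the unique arc in $T'$ between any two such points passes through $q$ without ever entering $\delta'$. Thus $(\delta',\,T'\setminus\delta')$ partitions $T'$ into an open direction and a closed subtree, both convex.

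With this dichotomy the argument splits into three cases. If $y_1,y_2\in T'\setminus\delta'$, convexity of $T'\setminus\delta'$ forces $[y_1,y_2]\subset T'\setminus\delta'$, contradicting $y\in\delta'\cap[y_1,y_2]$. If instead some $y_i\in\delta'$, then since $x\in\delta$ as well we have $(x,y_i)\in\delta\times\delta'$; but $\delta\times\delta'$ is light, so this contradicts $(x,y_i)\in\Core(T\times T')$ (using Remark \ref{rm:inclusion} to pass to any larger quadrant if necessary). In every case we reach a contradiction, so $(x,y)\in\Core(T\times T')$ and the fiber is connected. Swapping the roles of $T$ and $T'$ gives the statement for fibers $\Core(T\times T')\cap(T\times\{x'\})$.

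There is no serious obstacle: the proof is essentially a tautology once one recognizes that the core is defined as the complement of a union of ``rectangles'' $\delta\times\delta'$ in $T\times T'$, and that the factors of each rectangle have convex complements in the factor trees. The only spot requiring a moment of care is the verification that $T'\setminus\delta'$ really is a subtree, which is where the tree structure of $T'$ (as opposed to mere connectivity of directions) is used.
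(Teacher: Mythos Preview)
Your argument is correct. The paper does not actually prove this proposition; it merely quotes it from Guirardel \cite{ar:G05} and uses it as a black box. So there is no in-paper proof to compare against.

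Two small cosmetic points. First, you announce ``three cases'' but only spell out two; the dichotomy is really just whether or not at least one $y_i$ lies in $\delta'$, and your treatment covers both possibilities. Second, the appeal to Remark \ref{rm:inclusion} is unnecessary: once $(x,y_i)\in\delta\times\delta'$ with $\delta\times\delta'$ light, the point $(x,y_i)$ already lies in the union defining the complement of $\Core(T\times T')$, contradicting $(x,y_i)\in\Core(T\times T')$ directly. Neither of these affects the validity of the proof.
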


Recall that a $G$--tree is \textit{irreducible} if there exist two
hyperbolic elements whose axes intersect in a compact set
\cite{ar:CM87}.  In particular any tree in $\cv$ is irreducible.

\begin{proposition}[Proposition 3.1 \cite{ar:G05}]\label{prop:empty}
  Let $T$ and $T'$ be $G$--trees.  If either $T$ or $T'$ is
  irreducible, then $\Core(T \times T') \neq \emptyset$.  In
  particular if $T,T' \in \cv$ then $\Core(T \times T') \neq
  \emptyset$.
\end{proposition}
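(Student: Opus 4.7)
The plan is to produce an explicit point of $T \times T'$ lying in $\Core(T\times T')$ by exhibiting $(x,x') \in T\times T'$ for which every quadrant containing it is heavy. I may assume without loss of generality that $T$ is irreducible; the ``in particular'' clause will then follow because every tree in $\cv$ is a free, minimal, simplicial $F_k$-tree with $k\geq 2$, which is automatically irreducible.

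The first step is to locate an element $g\in G$ that acts hyperbolically on both $T$ and $T'$ simultaneously. Irreducibility of $T$ supplies two hyperbolic elements $g_1,g_2\in G$ whose axes in $T$ meet in a compact set, so after passing to sufficiently high powers one obtains a rank-two free Schottky subgroup $H\leq G$ whose nontrivial elements are all hyperbolic on $T$. If every nontrivial element of $H$ were elliptic on $T'$, then by the standard dichotomy for group actions on $\R$-trees (elements with disjoint fixed-sets have hyperbolic product), the fixed subtrees $\mathrm{Fix}_{T'}(g_1)$ and $\mathrm{Fix}_{T'}(g_2)$ would have nonempty intersection, yielding a subtree $F\subset T'$ fixed pointwise by all of $H$. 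Spreading $F$ around by $G$ and invoking minimality of $T'$ produces a contradiction, either directly or by replacing $H$ with a conjugate $gHg\inv$ whose fixed subtree is disjoint from $F$, forcing some element of $\langle H, gHg\inv\rangle$ to be hyperbolic on $T'$.

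Once $g\in G$ is hyperbolic on both trees, let $A\subset T$ and $A'\subset T'$ be its axes with attracting/repelling endpoints $\xi_\pm\in\bd T$, $\xi'_\pm\in\bd T'$. Choose $(x,x')\in A\times A'$. Any quadrant $\delta\times\delta'$ containing $(x,x')$ is based at some $(q,q')$ with $q\neq x, q'\neq x'$; the direction $\delta$ then either contains all of $A$ (when $q\notin A$) or the half-axis from $q$ through $x$ (when $q\in A$), and in particular contains at least one of $\xi_\pm$, with a similar statement for $\delta'$. When $\delta$ and $\delta'$ select matching ends (both positive or both negative), a power $g^{\pm n}$ applied to a basepoint $(p,p')$ enters $\delta\times\delta'$ with both coordinates tending to infinity, verifying heaviness. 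The mismatched case is handled by producing, via the same argument applied in a conjugate of $H$, a second element $h\in G$ hyperbolic on both trees with axes transverse to those of $g$; then products $g^{n_i}h^{m_i}$ furnish the required escaping orbit into the quadrant.

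The main obstacle is the simultaneous-hyperbolicity step: passing from the Schottky subgroup on $T$ to an element hyperbolic on both $T$ and $T'$, and subsequently to a second such element with transverse axes. This step relies essentially on the minimality of $T'$ and a careful analysis of elliptic subgroups acting on $\R$-trees, and is the technical heart of the argument.
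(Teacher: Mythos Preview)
The paper does not give its own proof of this proposition: it is quoted from Guirardel's paper, with the remark immediately following that the version there is more general. There is thus nothing in the present paper to compare your argument against.

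Regarding correctness, your step 3 has a genuine gap in the mismatched case. You fix $(x,x')$ on the axes of a single element $g$; when a quadrant $\delta\times\delta'$ contains $(x,x')$ but sees only $\xi_+$ in $\delta_\infty$ and only $\xi'_-$ in $\delta'_\infty$, no power of $g$ enters it. You then invoke a second element $h$ hyperbolic on both trees with ``transverse'' axes and assert that products $g^{n_i}h^{m_i}$ furnish the required escaping orbit. This is not justified: the point $(x,x')$ was chosen on the axis of $g$, not of $h$; there is no reason either attracting or repelling end of $h$ (in either tree) lies in the given quadrant; and you give no mechanism for controlling where such products send a basepoint \emph{simultaneously} in $T$ and in $T'$. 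Irreducibility must be exploited more substantially here, and this is in fact where the content of Guirardel's argument lies; he does not proceed by exhibiting a single explicit core point in this way.

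Step 1 also has a smaller gap. The element you produce in $\langle H, gHg\inv\rangle$ that is hyperbolic on $T'$ is a product $h_1 h_2$ with $h_1\in H$ and $h_2\in gHg\inv$, and you need this product to be hyperbolic on $T$ as well; that does not follow merely from $h_1$ and $h_2$ being individually hyperbolic there. (This is repairable, e.g.\ by replacing $h_1$ with a high power, but it needs to be said.) You also do not justify the existence of $g\in G$ with $g\cdot\mathrm{Fix}_{T'}(H)$ disjoint from $\mathrm{Fix}_{T'}(H)$; minimality alone does not immediately give this when the fixed set is a nondegenerate subtree.
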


In \cite{ar:G05}, the above proposition is more general, but for our
purposes, the above version is sufficient.  Whenever the core is
non-empty, there is a stronger condition for heavy quadrants.

\begin{proposition}[Corollary 3.8 \cite{ar:G05}]\label{prop:heavy2}
  Let $T$ and $T'$ be $G$--trees and suppose $\Core(T \times T') \neq
  \emptyset$.  Then a quadrant $\delta \times \delta'$ is heavy if and
  only if there is an element $g \in G$ which is hyperbolic for both
  $T$ and $T'$ such that $\omega_{T}(g) \in \bd \delta$ and
  $\omega_{T'}(g) \in \bd \delta'$.
\end{proposition}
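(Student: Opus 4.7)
The plan is to prove the two implications of the biconditional separately. The easy direction is $(\Leftarrow)$: assume $g\in G$ is hyperbolic on both trees with $\omega_T(g)\in\bd\delta$ and $\omega_{T'}(g)\in\bd\delta'$, and set $g_i:=g^i$. Hyperbolic dynamics on each tree give $g^ip\to\omega_T(g)$ and $g^ip'\to\omega_{T'}(g)$ in the respective boundaries, so eventually $g^ip\in\delta$ and $g^ip'\in\delta'$. The displacements $d_T(p,g^ip)$ and $d_{T'}(p',g^ip')$ grow linearly in $i$ at rates equal to the (nonzero) translation lengths of $g$ on $T$ and $T'$, verifying Definition~\ref{def:heavy}.

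For the hard direction $(\Rightarrow)$, my plan is extract-then-ping-pong. Given a sequence $g_i$ witnessing heaviness, first pass to a subsequence so that $g_ip$ converges to some $\xi\in T\cup\bd T$ and $g_ip'$ to some $\xi'\in T'\cup\bd T'$. Since $d_T(p,g_ip)\to\infty$, $d_{T'}(p',g_ip')\to\infty$, and $g_ip\in\delta$, $g_ip'\in\delta'$ for all $i$, we must have $\xi\in\bd\delta$ and $\xi'\in\bd\delta'$. Next, build a single $g\in G$ hyperbolic on both trees with $\omega_T(g)=\xi$ and $\omega_{T'}(g)=\xi'$ by examining products $h_{ij}:=g_jg_i^{-1}$ with $j\gg i\gg 1$. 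Each $h_{ij}$ sends $g_ip$ (deep in $\delta$, near $\xi$) to $g_jp$ (deeper in $\delta$, still near $\xi$), with $d_T(g_ip,g_jp)$ large, and analogously on $T'$. Since in a tree an elliptic isometry $a$ satisfies $d(x,ax)=2\,d(x,\mathrm{Fix}(a))$ with $\mathrm{Fix}(a)$ a subtree, for well-chosen indices $h_{ij}$ is forced to be hyperbolic on both trees, with axes threading between $g_ip,g_jp$ on $T$ and $g_ip',g_jp'$ on $T'$; the attracting endpoints then lie in $\bd\delta$ and $\bd\delta'$. A standard ping-pong between two such carefully chosen $h_{ij}$'s yields the desired $g$.

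The main obstacle is simultaneity: ensuring a single element is hyperbolic on \emph{both} $T$ and $T'$ with both attracting endpoints in the correct quadrant. A priori $h_{ij}$ could be elliptic on one tree and hyperbolic on the other, or its axis could exit the chosen quadrant even while the endpoints $g_ip$, $g_jp$ sit deep inside $\delta$. The hypothesis $\Core(T\times T')\neq\emptyset$ (Proposition~\ref{prop:empty}) excludes the degenerate configurations—such as $T$ and $T'$ sharing common fixed ends along which $G$ acts—under which this simultaneity would fail. Turning the heuristic into a rigorous index selection, uniform enough to control both factors at once and robust against the attracting endpoint escaping either $\bd\delta$ or $\bd\delta'$, is the technical heart of the argument.
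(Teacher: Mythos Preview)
The paper does not prove this statement; it is quoted as Corollary~3.8 of Guirardel's paper \cite{ar:G05} and used as a black box. So there is no ``paper's own proof'' to compare against beyond the citation.

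That said, your sketch deserves comment. The direction $(\Leftarrow)$ is fine as written. The direction $(\Rightarrow)$, however, has a genuine gap that you yourself flag but do not close. Knowing that $h_{ij}=g_jg_i^{-1}$ sends $g_ip$ to $g_jp$ tells you almost nothing about the location of the axis of $h_{ij}$ in $T$: in a tree, an isometry can move a point a long distance while having its axis (or fixed set) far from both the point and its image. You would need, for instance, that $h_{ij}^{-1}(g_ip)=g_ig_j^{-1}g_ip$ lies on the opposite side of $g_ip$ from $g_jp$ along the same geodesic, and there is no reason the heaviness hypothesis alone provides this---you have no control over $g_j^{-1}g_ip$. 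The same problem arises independently in $T'$, and you need both to work for the \emph{same} pair $(i,j)$. Your appeal to $\Core(T\times T')\neq\emptyset$ to ``exclude degenerate configurations'' is exactly where the real work lies, and saying it rules out ``$T$ and $T'$ sharing common fixed ends'' is not a mechanism that feeds into your $h_{ij}$ construction. Guirardel's actual argument uses the structure of the core (and in particular the existence of elements simultaneously hyperbolic on both trees, which is tied to irreducibility/nonemptiness) in a more direct way than an ad hoc ping-pong on the $g_i$. As written, your $(\Rightarrow)$ is a plausible heuristic but not a proof.
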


It may happen that the core is not connected.  In this case there is a
procedure of adding ``diagonal'' edges to the core, resulting in the
\emph{augmented core} $\hat{\Core}$.  Adding edges to $\Core$ does not
effect the volume of $\Core/G$ and hence does not effect the
intersection number.


\section{\texorpdfstring{The core $\mathcal{C}(T \times T')$ for trees
    in $cv_k$}{The core C(T x T') for trees in cvk}}\label{sc:build}

In this section we give a method for computing the intersection number
between two trees in $\cv$ and show that the asymptotics of $n \mapsto
i(T,T'\phi^n)$ do not depend on the trees $T$ and $T'$.

\begin{lemma}\label{lm:heavy}
  Let $f\co T \to T'$ be a morphism between $T,T' \in \cv$.  For two
  directions $\delta \subset T$ and $\delta' \subset T'$ the quadrant
  $\delta \times \delta'$ is heavy if and only if
  $f_{\infty}(\delta_{\infty}) \cap \delta'_{\infty} \neq \emptyset$.
\end{lemma}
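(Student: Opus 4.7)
The plan is to invoke Proposition \ref{prop:heavy2} and then use equivariance and continuity of $f_\infty$ together with density of axis endpoints in $\bd T$. Since $T, T' \in \cv$, Proposition \ref{prop:empty} guarantees $\Core(T \times T') \neq \emptyset$, so Proposition \ref{prop:heavy2} is applicable. Moreover, the $F_k$-action on any tree in $\cv$ is free, so every $g \in F_k \setminus \{1\}$ is hyperbolic in both $T$ and $T'$, and hence has a well-defined axis endpoint $\omega_T(g) \in \bd T$ and $\omega_{T'}(g) \in \bd T'$. Heaviness of $\delta \times \delta'$ is therefore equivalent to the existence of a nontrivial $g \in F_k$ with $\omega_T(g) \in \delta_\infty$ and $\omega_{T'}(g) \in \delta'_\infty$. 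Equivariance of $f$ passes to $f_\infty$ and gives $f_\infty(\omega_T(g)) = \omega_{T'}(g)$ for every such $g$.

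The forward direction follows immediately: if such a $g$ exists, then $f_\infty(\omega_T(g)) = \omega_{T'}(g)$ lies in both $f_\infty(\delta_\infty)$ and $\delta'_\infty$.

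For the reverse direction, fix $\xi \in \delta_\infty$ with $f_\infty(\xi) \in \delta'_\infty$. The set $\delta'_\infty$ is open in $\bd T'$, and $f_\infty$ is continuous (in fact a homeomorphism, by bounded cancellation), so $U := \delta_\infty \cap f_\infty\inv(\delta'_\infty)$ is an open neighborhood of $\xi$ in $\bd T$. It then suffices to produce $g \neq 1$ with $\omega_T(g) \in U$, since then $\omega_{T'}(g) = f_\infty(\omega_T(g)) \in \delta'_\infty$ as well. Such a $g$ exists because the axis endpoints are dense in $\bd T$: the ray from $p$ representing $\xi$ projects to an infinite walk in the finite graph $T/F_k$, so some oriented edge is traversed infinitely often; picking two such occurrences along the ray and taking the unique element of $F_k$ matching them produces a hyperbolic $g$ whose axis shares an arbitrarily long initial segment with a tail of the ray, so $\omega_T(g)$ can be made arbitrarily close to $\xi$.

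The main obstacle is the reverse direction, specifically the density step; the other pieces (equivariance, continuity of $f_\infty$ via bounded cancellation, and openness of $\delta_\infty$, $\delta'_\infty$) are essentially formal once Proposition \ref{prop:heavy2} is in hand.
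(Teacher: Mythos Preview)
Your argument is correct. The forward direction is exactly the paper's, but for the converse you take a different route: you stay at the level of the boundary and invoke Proposition~\ref{prop:heavy2} again, using continuity of $f_\infty$ together with density of attracting fixed points to land a hyperbolic element in the open set $\delta_\infty \cap f_\infty\inv(\delta'_\infty)$. The paper instead works directly with Definition~\ref{def:heavy}: it picks a ray $R \subset \delta$ whose end maps into $\delta'_\infty$, arranges $f(R) \subset \delta'$, and then uses the finite-graph pigeonhole to produce a point $R_0 \in R$ and elements $x_i$ with $x_i R_0 \in R$ and $d_T(R_0, x_i R_0) \to \infty$; equivariance and bounded cancellation then show that $(R_0, f(R_0))$ together with the $x_i$ witness heaviness. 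Both arguments rest on the same pigeonhole observation in $T/F_k$; the paper uses it to manufacture an explicit witnessing sequence, while you package it as density of axis endpoints. Your version is a bit more conceptual and exploits that Proposition~\ref{prop:heavy2} is an equivalence, at the cost of needing the boundary topology and continuity of $f_\infty$; the paper's version is more self-contained and never touches those.
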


\begin{proof}
  Suppose that $\delta \times \delta'$ is heavy.  By Proposition
  \ref{prop:empty} the core $\Core(T \times T')$ is non-empty, and
  hence by Proposition \ref{prop:heavy2} there is an element $x \in
  F_k$ with $\omega_T(x) \in \delta_\infty$ and $\omega_{T'}(x) \in
  \delta'_\infty$.  As $f_\infty(\omega_T(x)) = \omega_{T'}(x)$, we
  see that $f_\infty(\delta_\infty) \cap \delta'_\infty \neq
  \emptyset$.

  Conversely, suppose that $f_\infty(\delta_\infty) \cap
  \delta'_\infty \neq \emptyset$.  Let $R \subset \delta$ be a ray
  whose equivalence class is mapped by $f_\infty$ into
  $\delta'_\infty$.  We can assume that $f(R) \subset \delta'$.  As
  $T/F_k$ is a finite graph, there is a point $R_0 \in R$ and elements
  $x_i \in F_k$ such that $x_iR_0 \in R$ and $d_T(R_0,x_iR_0) \to
  \infty$ as $i \to \infty$.  Now $x_if(R_0) = f(x_iR_0) \in f(R)
  \subset \delta'$ and by bounded cancellation
  $d_{T'}(f(R_0),x_if(R_0)) \to \infty$ as $i \to \infty$.  Thus the
  point $(R_0,f(R_0)) \in T \times T'$ and elements $x_i \in F_k$
  witness $\delta \times \delta'$ as a heavy quadrant.
\end{proof}

Using the above lemma we can determine which rectangles of $T \times
T'$ are in the core $\Core(T \times T')$.  This is enough to compute
the intersection number $i(T,T')$.  The following definition is
closely related to the notion of a (one-sided) \textit{cylinder} in
\cite{col:K06}.

\begin{definition}\label{def:box}
  For an oriented edge $e \subset T$, the clopen subset of $\bd T$
  consisting of equivalence classes of geodesic rays originating at
  $\iv(e)$ and containing $e$ is called a \textit{box}, which we
  denote $\clop{e}$.
\end{definition}

\begin{lemma}\label{lm:endinC}
  Let $f\co T \to T'$ be a morphism between $T,T' \in \cv$.  Given two
  edges $e \subset T$ and $e' \subset T'$, the rectangle $e \times e'
  \subset T \times T'$ is in the core $\Core(T \times T')$ if and only
  if each of the subsets $f_\infty(\clop{e}) \cap \clop{e'}$,
  $f_\infty(\clop{\bar{e}}) \cap \clop {e'}$, $f_\infty(\clop{e}) \cap
  \clop{\bar{e}'}$ and $f_\infty(\clop{\bar{e}}) \cap \clop{\bar{e}'}$
  is non-empty.
\end{lemma}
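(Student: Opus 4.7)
The plan is to translate the four end-intersection conditions into heaviness conditions for the four ``corner'' quadrants $\delta_{\epsilon}\times\delta_{\epsilon'}$ determined by $e$ and $e'$ (with $\epsilon\in\{e,\bar{e}\}$ and $\epsilon'\in\{e',\bar{e}'\}$), and then prove that $e\times e'$ lies in $\Core(T\times T')$ if and only if all four corner quadrants are heavy. By the definitions of box and direction, $\clop{e}=(\delta_{e})_{\infty}$, $\clop{\bar{e}}=(\delta_{\bar{e}})_{\infty}$, and similarly for the primed edges, so Lemma \ref{lm:heavy} identifies non-emptiness of each of the four end-intersections with heaviness of the corresponding corner quadrant. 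Once this reformulation is in place the remaining work is purely about the combinatorics of quadrants.

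For the forward direction I would use the elementary observation that the open edge $e$ is contained in both $\delta_{e}$ and $\delta_{\bar{e}}$, and likewise the open edge $e'$ lies in both $\delta_{e'}$ and $\delta_{\bar{e}'}$. Consequently each of the four corner quadrants contains the open rectangle $e\times e'$. If any of the four were light, the open rectangle would meet a light quadrant, contradicting $e\times e'\subset\Core(T\times T')$. Hence all four corner quadrants are heavy.

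For the reverse direction I will argue the contrapositive: suppose some light quadrant $\eta\times\eta'$ meets $e\times e'$, and produce an empty end-intersection among the four. The heart of the argument is the following geometric claim: any direction $\eta\subset T$ that meets the open edge $e$ has end set $\eta_{\infty}$ containing either $\clop{e}$ or $\clop{\bar{e}}$. I verify this by case analysis on the basepoint $p$ of $\eta$. If $p=\iv(e)$ or $p=\tv(e)$ then $\eta$ must coincide with $\delta_{e}$ or $\delta_{\bar{e}}$ respectively, so $\eta_{\infty}=\clop{e}$ or $\eta_{\infty}=\clop{\bar{e}}$. If $p$ is interior to $e$ then $\eta$ is one of the two components of $T\setminus\{p\}$, and a direct inspection (rays from $p$ either exit through $\tv(e)$ or through $\iv(e)$) shows that their end sets are exactly $\clop{e}$ and $\clop{\bar{e}}$. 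Finally, if $p$ lies off of the closed edge, then $\eta$ is the unique component of $T\setminus\{p\}$ containing $e$, and this component contains the entire ``far'' half-subtree obtained by cutting $T$ along the interior of $e$; consequently $\eta_{\infty}\supset\clop{e}$ or $\eta_{\infty}\supset\clop{\bar{e}}$ according to which side of $e$ the basepoint $p$ lies on. Applying this in both factors yields $\epsilon\in\{e,\bar{e}\}$ and $\epsilon'\in\{e',\bar{e}'\}$ with $\clop{\epsilon}\subset\eta_{\infty}$ and $\clop{\epsilon'}\subset\eta'_{\infty}$. Lemma \ref{lm:heavy} applied to the light quadrant $\eta\times\eta'$ gives $f_{\infty}(\eta_{\infty})\cap\eta'_{\infty}=\emptyset$, and these inclusions then force $f_{\infty}(\clop{\epsilon})\cap\clop{\epsilon'}=\emptyset$, contradicting the hypothesis.

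The main subtlety is the geometric claim used in the reverse direction: one must resist the natural but incorrect expectation that a direction meeting $e$ contains $\delta_{e}$ or $\delta_{\bar{e}}$ as subsets of $T$ (this fails already when the basepoint of $\eta$ is interior to $e$). The correct inclusion holds only at the level of end sets, which is precisely the level at which Lemma \ref{lm:heavy} translates between directions and heaviness, so once this observation is correctly isolated the argument is essentially bookkeeping.
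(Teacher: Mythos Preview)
Your proof is correct and follows essentially the same approach as the paper: reduce via Lemma~\ref{lm:heavy} to showing that $e\times e'\subset\Core(T\times T')$ if and only if the four corner quadrants $\delta_\epsilon\times\delta_{\epsilon'}$ are heavy. The only difference is packaging: the paper picks an interior point $(p,p')\in e\times e'$, notes that any quadrant containing $(p,p')$ must contain one of the four quadrants $Q_i$ based at $(p,p')$ (so by Remark~\ref{rm:inclusion} the $Q_i$ are heavy iff $(p,p')$ is in the core), and then observes that each $Q_i$ differs from a corner quadrant only by a bounded set; your case analysis on the basepoint of $\eta$ accomplishes the same thing directly at the level of end sets, trading brevity for explicitness.
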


\begin{proof}
  Let $(p,p')$ be an interior point in the rectangle $e \times e'$.
  There are two directions at each of $p$ and $p'$.  These four
  directions combine to give us four quadrants $Q_i$, $i = 1,2,3,4$.
  The important feature of these four quadrants is that any quadrant
  that contains the point $(p,p')$ must also contain one of the
  $Q_{i}$'s.  Thus by Remark \ref{rm:inclusion}, the $Q_{i}$ quadrants
  are heavy if and only if $(p,p')$ is in $\Core(T \times T')$.  As
  any of the four directions at $p$ and $p'$ lie in a bounded
  neighborhood of one of $\delta_{e}, \delta_{\bar{e}}, \delta_{e'}$
  or $\delta_{\bar{e}'}$, the $Q_{i}$'s are heavy if and only if the
  quadrants $\delta_{e} \times \delta_{e'}, \delta_{e} \times
  \delta_{\bar{e}'}, \delta_{\bar{e}}\times \delta_{e'}$ and
  $\delta_{\bar{e}} \times \delta_{\bar{e}'}$ are heavy.  Therefore by
  Lemma \ref{lm:heavy}, each of the four quadrants above are heavy and
  hence $(p,p') \in \Core(T \times T')$ if and only if the sets in the
  statement of the lemma are non-empty.  As this is true for any point
  in the rectangle $e \times e'$, this rectangle is in the core if and
  only if each of these four sets is non-empty.
\end{proof}

\begin{remark}\label{rm:core}
  In a similar manner, we can also determine exactly when a vertex ($v
  \times v'$), a vertical edge ($v \times e'$), a horizontal edge ($e
  \times v'$) or a ``diagonal edge'' (in the augmented core replacing
  twice light rectangles, see \cite{ar:G05} for details) is in the
  core.  The conditions are not as simple to state for vertices or
  horizontal and vertical edges as there can be several directions at
  a vertex.  It is easy to see that $e \times e'$ is a twice-light
  rectangle if and only if $f_{\infty}(\clop{e}) = \clop{e'}$.
\end{remark}

\begin{remark}\label{rm:endinC}
  Another way to phrase Lemma \ref{lm:endinC} is given two edges $e
  \subset T$ and $e' \subset T'$ the rectangle $e \times e'$ is in the
  core $\Core(T \times T')$ if and only if there exist two geodesics
  $\rho^+,\rho^-\co(-\infty,\infty) \to T$ whose image contains $e$
  such that $f_\infty(\rho^+_\infty),f_\infty(\rho^-_{-\infty}) \in
  \clop{e'}$ and $f_\infty(\rho^+_{-\infty}),f_\infty(\rho^-_\infty)
  \in \clop{\bar{e}'}$.
\end{remark}

\begin{definition}\label{def:slice}
  Let $e$ be an edge of $T$.  The \textit{slice} of the core $\Core(T
  \times T')$ above $e$ is the set:
  \begin{equation*}\label{eq:slice}
    \Core_e = \{ e' \in T' \ | \ e \times e' \subset \Core(T \times T')
    \}.
  \end{equation*}
  Similarly define the slice $\Core_{e'} \subset T$ for an edge $e'$
  of $T'$.
\end{definition}

By Proposition \ref{prop:convex}, the slice $\Core_e$ is a subtree of
$T'$.  Clearly, every rectangle in the core belongs to one of the sets
$e \times \Core_e$.  For any interior point $p_{e} \in e$, the tree
$\{p_e\} \times \Core_e$ embeds in the quotient $\Core(T \times
T')/F_k$, as the stabilizer of any edge in $T$ is trivial. Therefore
the intersection number $i(T,T')$ can be expressed as the sum:
\begin{equation}\label{eq:slice_sum}
  i(T,T') = \sum_{e \subset T/F_k} l_{T}(e)\vol(\Core_{e})
\end{equation}
where $l_{T}(e)$ is the length of the edge $e \subset T$ and
$\vol(\Core_e)$ is the sum of the lengths of the edges in $\Core_e$.
We are therefore interested in finding the slices $\Core_e$ for a set
of representative edges for $T/F_k$.

Using the above characterization of the core we can show that the
asymptotics of $n \mapsto i(T,T'\phi^n)$ only depend on the
automorphism $\phi$ and not on the trees $T,T' \in \cv$.

\begin{lemma}\label{lm:compareslice}
  For any $T,T' \in \cv$ there are constants $K \geq 1$ and $C \geq
  0$ such that for any $T'' \in \cv$ and any edge $e \subset T''$:
 $$\frac{1}{K}\vol(\Core'_e) - C \leq \vol(\Core_e) 
 \leq K\vol(\Core'_e) + C$$ where $\Core_e,\Core'_e$ are the
 respective slices of the cores $\Core(T \times T'')$ and $\Core(T'
 \times T'')$ above the edge $e$.
\end{lemma}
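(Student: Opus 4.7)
Fix equivariant morphisms $g\co T\to T'$ and $h\co T'\to T$; these exist for any pair $T,T'\in\cv$. By bounded cancellation both are quasi-isometries, and we let $L$ be a common Lipschitz constant. The induced maps on ends $g_\infty, h_\infty$ are mutually inverse equivariant homeomorphisms, and the compositions $h\circ g$ and $g\circ h$ are morphisms homotopic to the identity, hence they move points by a bounded amount $R$. For any $T''\in\cv$ fix a morphism $f\co T\to T''$ and set $f' := f\circ h\co T'\to T''$. By Lemma~\ref{lm:endinC} and the observation that an edge lies in the convex hull of a subset of $\bd T$ exactly when ends of that subset sit on both sides of the edge, the slices over $e\subset T''$ admit the description $\Core_e = \mathrm{Hull}_T(A_e)\cap\mathrm{Hull}_T(B_e)\subset T$ and $\Core'_e = \mathrm{Hull}_{T'}(A'_e)\cap\mathrm{Hull}_{T'}(B'_e)\subset T'$, where $A_e = f_\infty^{-1}(\clop{e})$, $B_e = f_\infty^{-1}(\clop{\bar e})$ and $A'_e = g_\infty(A_e)$, $B'_e = g_\infty(B_e)$.

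Any point in $\mathrm{Hull}_T(E)$ lies on a bi-infinite geodesic between two ends of $E$; its image under $g$ is a quasi-geodesic in $T'$ which, by $0$-hyperbolicity of the $\R$-tree $T'$, lies within bounded Hausdorff distance of the true geodesic between the corresponding ends of $g_\infty(E)$. Hence $g(\mathrm{Hull}_T(E))\subset N_D(\mathrm{Hull}_{T'}(g_\infty(E)))$ for some $D$ depending only on $L$ and the bounded-cancellation constant. When $\Core'_e\neq\emptyset$, a tree-geometric computation shows that $N_D(\mathrm{Hull}(A'_e))\cap N_D(\mathrm{Hull}(B'_e))\subset N_{3D}(\Core'_e)$, so $g(\Core_e)\subset N_{3D}(\Core'_e)$; combined with the displacement bound on $h\circ g$ and the Lipschitz bound on $h$, this yields $\Core_e\subset N_{R+3LD}(h(\Core'_e))$. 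When $\Core'_e=\emptyset$, the two hulls in $T'$ are disjoint with bridge of length at most $2D$ (else their $D$-neighborhoods would not meet, contradicting nonemptiness of $g(\Core_e)$), so $g(\Core_e)$ and hence $\Core_e$ have diameter bounded uniformly in $T''$ and $e$.

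Since $h$ is $L$-Lipschitz, the convex hull $H$ of $h(\Core'_e)$ in $T$ is a finite subtree with $\vol(H)\le L\,\vol(\Core'_e)$. Because $T/F_k$ is a finite graph, $T$ has bounded valence and positive minimum edge length, so the volume of the $r$-neighborhood of any finite subtree of $T$ is at most $K'\vol(H)+C'$ for constants depending only on $r$ and $T$. In the first case above this yields $\vol(\Core_e)\le K\,\vol(\Core'_e)+C$, while in the second case $\Core_e$ has bounded volume (absorbed into $C$). Swapping the roles of $T$ and $T'$ produces the reverse inequality. The main obstacle is this final step — translating a Hausdorff-distance bound on subtrees into a volume comparison — which requires using the bounded valence and minimum edge length of $T$ to control the number of leaves of a finite subtree in terms of its volume, so that a bounded neighborhood of the subtree does not have disproportionately larger volume.
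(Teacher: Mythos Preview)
Your argument is correct, and it takes a genuinely different route from the paper's. The paper works combinatorially: it fixes a morphism $g\co T'\to T$, chooses in $\Core'_e$ a maximal family of segments of a fixed length $K_1$ that are pairwise at distance at least $K_2$, and for each such segment uses the geodesic description of Remark~\ref{rm:endinC} to push a pair of witnessing geodesics forward by $g$ and locate a distinct edge of $\Core_e$ in the overlap of their tightenings. Counting segments versus edges gives one inequality, and symmetry gives the other. Your approach is more global and geometric: you recognise the slice as an intersection of two boundary hulls, $\Core_e=\mathrm{Hull}_T(A_e)\cap\mathrm{Hull}_T(B_e)$, observe that a quasi-isometry between trees coarsely preserves boundary hulls (via the Morse lemma in a $0$--hyperbolic space), and then use the tree identity $N_D(X)\cap N_D(Y)\subseteq N_D(X\cap Y)$ for intersecting subtrees to conclude that $g(\Core_e)$ lies in a uniform neighbourhood of $\Core'_e$. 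The volume comparison then follows from bounded geometry.

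Two minor points worth tightening. First, $f':=f\circ h$ need not be a morphism in the paper's strict sense (edges to \emph{tight} paths), but Lemmas~\ref{lm:heavy} and~\ref{lm:endinC} only use that the map is an equivariant quasi-isometry with the correct boundary homeomorphism, so this causes no trouble; alternatively one can take any morphism $T'\to T''$ and note that all equivariant quasi-isometries induce the same map on ends. Second, your ``main obstacle'' is more routine than you suggest: since $h(\Core'_e)$ is already a connected subset of the tree $T$ (hence convex), its hull is itself, and the Lipschitz bound gives $\vol(h(\Core'_e))\le L\,\vol(\Core'_e)$ directly; the bounded-neighbourhood estimate is then immediate from bounded valence and minimum edge length. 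What your approach buys is a cleaner structural statement --- the slices over a fixed edge are uniformly Hausdorff-close after transporting by the change-of-marking map --- whereas the paper's approach makes the constants more explicit and avoids the case split on whether $\Core'_e$ is empty.
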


\begin{proof}
  As the volume of the slices $\Core_e$ and $\Core'_e$ scale with
  proportionately with $T$ and $T'$, we are free to assume that the
  length of any edges in $T$ or $T'$ is 1.  Fix morphisms $f\co T \to
  T''$ and $g\co T' \to T$.  Then $h = f \circ g$ is a morphism from
  $T'$ to $T''$.  Fix an edge $e \subset T''$ and let $p_e$ be an
  interior point of $e$.  First we will show that there are constants
  $K_0\geq 1 $ and $C_0 \geq 0$ that only depend on the morphism $g\co
  T \to T'$ such that $\frac{1}{K_0}\vol(\Core'_e) - C_0 \leq
  \vol(\Core_e)$.

  As $g\co T' \to T$ is a quasi-isometry, there is are constants
  $K_1$ and $K_2$ such that the following hold where $\rho$ and
  $\rho'$ are geodesics in $T'$:
  \begin{itemize}
  \item[1.] If the Hausdorff distance between $\rho$ and $\rho'$ is at
    least $K_2$ then the intersection of the geodesics $[g(\rho)]$ and
    $[g(\rho')]$ in $T$ is empty; and
  \item[2.] If $\rho$ and $\rho'$ intersect in $L$ segments of length
    $K_1$ that are separated by segments of length $K_2$ then the
    intersection of the geodesics $[g(\rho)]$ and $[g(\rho')]$ in $T$
    contains at least $L$ edges.
  \end{itemize}
  For a finite subtree $X \subset T'$, let $N(X)$ be equal to the
  maximum cardinality of collection of segments of length $K_1$ in $X$
  that are pairwise distance at least $K_2$ apart.  There exist
  constants $K_0 \geq 1$ and $C_0 \geq 0$ such that
  $\frac{1}{K_0}\vol(X) - C_0 \leq N(X)$.

  Let $\mathcal{A}$ denote a maximal collection of segments of length
  $K_1$ in $\Core_e'$ that are pairwise distance at least $K_2$ apart.
  Given a segment $\alpha \in \mathcal{A}$, as $\alpha \subseteq
  \Core_e'$, by Lemma \ref{lm:endinC} (see Remark \ref{rm:endinC}) we
  can find two geodesics $\rho^+\co\R \to T'$ and $\rho^-\co \R \to
  T'$ that contain the segment $\alpha$ such that
  $h_\infty(\rho^+_\infty),h_\infty(\rho^-_{-\infty}) \in \clop{e}$
  and $h_\infty(\rho^+_{-\infty}),h_\infty(\rho^-_\infty) \in
  \clop{\bar{e}}$.  Now let $e' \subseteq T$ be an edge contained in
  the intersection of $[g(\rho^+)]$ and $[g(\rho^-)]$.  Thus
  $g_\infty(\rho^+_\infty), g_\infty(\rho^-_\infty) \in \clop{e'}$ and
  $g_\infty(\rho^+_{-\infty}), g_\infty(\rho^-_{-\infty}) \in
  \clop{e'}$.  As $h_\infty = f_\infty \circ g_\infty$ we see that
  $f_\infty(\clop{e'}) \cap \clop{e} \neq \emptyset$ and similarly for
  the three other sets in Lemma \ref{lm:endinC}, hence $e' \in
  \Core_e$.  Repeat for all other segments in $\mathcal{A}$.  By
  construction, the edges in $T$ corresponding to the $N(\Core_e')$
  segments of $\mathcal{A}$ are disjoint, therefore $N(\Core_e') \leq
  \vol(\Core_e)$.  Hence $\frac{1}{K_0}\vol(\Core'_e) - C_0 \leq
  \vol(\Core_e)$.

  A similar argument using morphisms $f'\co T' \to T''$ and $g'\co T
  \to T'$ shows that there are constants $K'_0 \geq 1 $ and $C'_0 \geq
  0$ only depending on the morphism $g'\co T \to T'$ such that
  $\frac{1}{K'_0}\vol(\Core_e) - C'_0 \leq \vol(\Core'_e)$.  This
  proves the lemma.
\end{proof}

From this lemma, we can show the independence of the asymptotics of
$i(T,T'\phi^n)$ in a special case.

\begin{corollary}\label{co:special-case}
  For any $T,T' \in \cv$, and any automorphism $\phi \in
  \Out(F_k)$, we have $i(T,T\phi^n) \sim i(T',T\phi^n)$.  
\end{corollary}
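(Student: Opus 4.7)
My plan is to apply Lemma \ref{lm:compareslice} with the third tree being $T'' = T\phi^n$, so that the corollary falls out as a packaging step. First, I note that the $\Out(F_k)$-action on $\cv$ preserves the unit-volume normalization: if $T$ has unit quotient volume, then so does $T\phi^n$, so $T\phi^n \in \cv$ and the lemma applies. It furnishes constants $K \geq 1$ and $C \geq 0$, depending only on $T$ and $T'$ (and in particular \emph{not} on $n$), such that
$$\frac{1}{K}\vol(\Core'_e) - C \leq \vol(\Core_e) \leq K\vol(\Core'_e) + C$$
for every edge $e \subset T\phi^n$, where $\Core_e \subset T$ and $\Core'_e \subset T'$ denote the slices of $\Core(T \times T\phi^n)$ and $\Core(T' \times T\phi^n)$ above $e$.

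Next, I would exploit the flip-symmetry $\Core(T \times T'') \leftrightarrow \Core(T'' \times T)$, which, combined with the slice-sum formula (\ref{eq:slice_sum}), lets me express the intersection numbers as sums over edges of the \emph{third} tree:
$$i(T, T\phi^n) = \sum_{e \subset T\phi^n/F_k} l_{T\phi^n}(e)\vol(\Core_e), \qquad i(T', T\phi^n) = \sum_{e \subset T\phi^n/F_k} l_{T\phi^n}(e)\vol(\Core'_e).$$
Multiplying the slice inequality above by $l_{T\phi^n}(e)$ and summing over edge representatives of $T\phi^n/F_k$ then yields
$$\frac{1}{K}\,i(T', T\phi^n) - C\,\vol(T\phi^n/F_k) \leq i(T, T\phi^n) \leq K\,i(T', T\phi^n) + C\,\vol(T\phi^n/F_k).$$
Since $T\phi^n \in \cv$, the quotient volume is $1$, so the error terms collapse to the constant $C$, and the desired relation $i(T, T\phi^n) \sim i(T', T\phi^n)$ follows immediately from the definition of $\sim$.

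There is essentially no obstacle: all the technical content is already packaged inside Lemma \ref{lm:compareslice}. The only items to verify are the preservation of the unit-volume normalization under $\Out(F_k)$ (standard) and the symmetric form of the slice-sum formula (which follows from the symmetry of the core under the coordinate flip). Thus the corollary reduces to a direct summation of the slice-by-slice comparison provided by the lemma, with the crucial point being that the constants $K$ and $C$ are independent of $n$ because they depend only on a pair of morphisms between $T$ and $T'$.
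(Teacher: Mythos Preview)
Your approach is correct and is exactly the paper's: apply Lemma~\ref{lm:compareslice} with $T'' = T\phi^n$ and sum the slice inequalities via \eqref{eq:slice_sum}, using that the constants $K,C$ depend only on $T,T'$ and not on $n$.

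One small correction: $\cv$ is the \emph{unprojectivized} Outer space (the normalized space is $\CV$), so $T\phi^n$ need not have quotient volume~$1$. This does not affect your argument, though, since the $\Out(F_k)$--action changes only the marking and hence $\vol(T\phi^n/F_k)=\vol(T/F_k)$ is a fixed constant independent of $n$; the error term $C\cdot\vol(T\phi^n/F_k)$ is therefore still a constant and the relation $\sim$ follows.
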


\begin{proof}
  This follows directly from \eqref{eq:slice_sum} and Lemma
  \ref{lm:compareslice} as the constants from the lemma only depend on
  $T$ and $T'$ and not on $T'' = T\phi^n$.
\end{proof}

From this special case we easily derive the full independence using
equivariance and symmetry.

\begin{proposition}\label{prop:treeindependent}
  For any $T,T',T'' \in \cv$, and any automorphism $\phi \in
  \Out(F_k)$, we have $i(T,T\phi^n) \sim i(T',T''\phi^n)$.
\end{proposition}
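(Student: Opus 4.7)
The plan is to bootstrap Corollary \ref{co:special-case} using the two basic properties of the intersection number recalled in the introduction: the symmetry $i(S,S') = i(S',S)$ and the equivariance $i(S\phi,S'\phi) = i(S,S')$. Corollary \ref{co:special-case} asserts that the first argument of $i(\,\cdot\,, T\phi^n)$ may be replaced without changing the asymptotics; combining this with symmetry and equivariance should analogously free up the second argument, and chaining the two freedoms together yields the full statement.

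First I would apply Corollary \ref{co:special-case} directly to obtain $i(T,T\phi^n) \sim i(T',T\phi^n)$. It then suffices to show $i(T',T\phi^n) \sim i(T',T''\phi^n)$ for any $T''$. Using symmetry followed by equivariance (applying $\phi^{-n}$ to both slots),
\begin{equation*}
  i(T',T\phi^n) \;=\; i(T\phi^n,T') \;=\; i(T,T'\phi^{-n}).
\end{equation*}
Since $T'\phi^{-n} = T'(\phi^{-1})^n$, applying Corollary \ref{co:special-case} with the automorphism $\phi^{-1}$ in place of $\phi$ (using the tree $T'$ in the role of the corollary's tree $T$) yields
\begin{equation*}
  i(T,T'\phi^{-n}) \;\sim\; i(T'',T'\phi^{-n}).
\end{equation*}
Reversing the symmetry/equivariance transformation converts the right-hand side into $i(T'\phi^{-n},T'') = i(T',T''\phi^n)$, which is the desired asymptotic equivalence.

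Chaining the two relations $i(T,T\phi^n) \sim i(T',T\phi^n) \sim i(T',T''\phi^n)$ completes the proof. This reduction is essentially bookkeeping; no serious obstacle arises. The only point needing care is that Corollary \ref{co:special-case} holds for every automorphism in $\Out(F_k)$ and so may be invoked with $\phi^{-1}$ as well as $\phi$ within the same argument, a fact used critically to swap the second argument.
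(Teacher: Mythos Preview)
Your proof is correct and follows essentially the same route as the paper's. Both arguments first invoke Corollary~\ref{co:special-case} to replace $T$ by $T'$ in the first slot, then use symmetry and equivariance to rewrite $i(T',T\phi^n)$ as an expression of the form $i(\,\cdot\,,T'(\phi^{-1})^n)$, apply Corollary~\ref{co:special-case} again (now for $\phi^{-1}$) to swap in $T''$, and reverse the symmetry/equivariance step; the paper simply compresses these moves into a single displayed chain.
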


\begin{proof}
  Applying Corollary \ref{co:special-case} along with equivariance and
  symmetry of intersection numbers, it follows that $i(T',T\phi^n) =
  i(T'\phi^{-n},T) \sim i(T'\phi^{-n},T'') = i(T',T''\phi^n)$.
  Combining this equivalence with the equivalence in Corollary
  \ref{co:special-case} we get $i(T,T\phi^n) \sim i(T',T\phi^n) \sim
  i(T',T''\phi^n)$ as desired.
\end{proof}


\section{\texorpdfstring{Slices of the core $\mathcal{C}(T \times
    T')$}{Slices of the core C(T x T')}}\label{sc:slice}

\subsection{The map on ends for a free group
  automorphism}\label{ssc:ends}

Let $f\co T \to T'$ be a morphism between $T,T' \in \cv$.  This
descends to a Lipschitz linear homotopy equivalence $\sigma\co \Gamma
\to \Gamma'$ where $\Gamma = T/F_k$ and $\Gamma' = T'/F_k$.  Fix a
morphism $f'\co T' \to T$ such that the induced map $\sigma'\co
\Gamma' \to \Gamma$ is a homotopy inverse to $\sigma$.  Homotoping
$\sigma'$ if necessary, we assume that the image of any small open
neighborhood of any vertex of $\Gamma'$ is not contained in an edge of
$\Gamma$.  Fix basepoints $*' \in T'$ and $* \in T$ such that $f'(*')
= *$.  For simplicity, we denote the images of these basepoints in
$\Gamma$ and $\Gamma'$ by $*$ and $*'$ respectively.  We will use the
map $\sigma' \co \Gamma' \to \Gamma$ to find the map on ends
$f_\infty\co \bd T \to \bd T'$.  This is the inverse of the
homeomorphism $f'_\infty$.

Let $e$ be an (oriented) edge of $\Gamma$.  Subdivide $e$ into
$e_+e_-$ and denote the subdivision point by $p_e$.  Fix a tight edge
path $\alpha_e \subset \Gamma$ from $*$ to $p_e$ whose final edge is
$e_+$.  The path $\alpha_e$ corresponds to choosing a representative
lift for $e$ in $T$.  For simplicity, we call this lift $e$ and the
lift of $p_e$ contained in this edge $p_e$.  This lift $e$ is oriented
``away'' from $*$, i.e., $\iv(e)$ separates $p_e$ from $*$.  Consider
the set of points $\Sigma_e = (\sigma')\inv(p_e) \subset \Gamma'$.
There is one point in this set for each edge of $\Gamma'$ whose image
under $\sigma'$ crosses either $e$ or $\bar{e}$ counted with
multiplicities.

For $q \in \Sigma_e$ there is a tight path $\gamma_q \subset \Gamma'$
from $*'$ to $q$ such that up to homotopy $\alpha_e =
[\sigma'(\gamma_q)]$.  Further, the path $\gamma_q$ is unique as
$\sigma'$ is a homotopy equivalence.  If before tightening the path
$\sigma'(\gamma_q)$, the final edge is $e_+$ we assign $q$ a ``$+$''
sign.  Else, the final edge is $\bar{e}_-$ and we assign $q$ a ``$-$''
sign.

Let $\uc{\gamma}_q$ be the lift of $\gamma_q$ to $T'$ that originates
at $*'$.  Denote the terminal point of $\uc{\gamma}_q$ by $\tilde{q}$.
Let $\uc{\Sigma}_e \subset T'$ be the collection of the point
$\tilde{q}$.  In other words, $\uc{\Sigma}_e = (f')\inv(p_e)$.  This
set decomposes into $\uc{\Sigma}_e^+$ (``$+$'' points) and
$\uc{\Sigma}_e^-$ (``$-$'' points), depending on the sign of the
images of the points in $\Sigma_e$.  Since $f'$ is locally injective
on the interior of edges, for any edge $e' \subset T'$, the set
$\uc{\Sigma}_e \cap e'$ is at most a single point.  Also, since $f'$
is cellular, $\uc{\Sigma}_e$ does not contain any vertices of $T'$.

\begin{remark}\label{rm:unbounded}
  Our hypothesis that the image of any small open neighborhood of a
  vertex in $\Gamma'$ by $\sigma'$ is not contained in an edge of
  $\Gamma$ implies that every component of $T' - \uc{\Sigma}_e$ is
  unbounded.
\end{remark}

\begin{example}\label{ex:map}
  Let $T \in cv_2$ be the tree with all edge lengths equal to 1 and
  such that $\Gamma = T/F_2$ is the 2--rose whose petals are marked
  $a$ and $b$.  We use capital letters to denote the edges with
  opposite orientation. Consider the automorphism $\phi \in \Out(F_2)$
  given by $\phi(a)=ab,\phi(b)=bab$.  There is an obvious homotopy
  equivalence $\sigma'\co \Gamma \to \Gamma$ representing the
  automorphism $\phi$.  We will carefully look at the above
  construction for the map $\sigma'$.

  Subdivide the edges of $\Gamma$ as described above, creating the
  points $p_a$ and $p_b$.  Fix preferred paths $\alpha_a = a_+$ and
  $\alpha_b = b_+$.  Also subdivide $a = a_1a_2a_3$ with subdivision
  points $p_{a_+},p_{a_-}$ and $b_+ = b_1b_2, b_- = b_3b_4$ with
  subdivision points $p_{b_+},p_{b_-}$.  We may assume $\sigma'(a_1) =
  a_+, \sigma'(a_2) = a_-b_+, \sigma'(a_3) = b_-$ and $\sigma'(b_1) =
  b_+,\sigma'(b_2) = b_-a_+, \sigma'(b_3) = a_-b_+,\sigma'(b_4) =
  b_-$.

  Therefore $\Sigma_a = \{ p_{a_+},p_b \}$ and $\Sigma_b = \{
  p_{a_-},p_{b_+},p_{b_-} \}$.  Then $[\sigma'(a_1)] = [a_+] =
  \alpha_a$ and $[\sigma'(aB_4B_3)] = [abBA_-] = a_+ = \alpha_a$.
  Hence $p_{a_+}$ is a ``$+$'' point and $p_b$ is a ``$-$'' point.
  Further $[\sigma'(bAA_3)] = [babBAB_-] = b_+ = \alpha_b$,
  $[\sigma'(b_1)] = [b_+] = \alpha_b$ and $[\sigma'(bAB_4)] =
  [babBAB_-] = b_+ = \alpha_b$.  Therefore $p_{b_+}$ is a ``$+$''
  point and $p_{a_-}$, $p_{b_-}$ are ``$-$'' points.  Figure
  \ref{fig:lifts} shows the sets $\uc{\Sigma}_a$ and $\uc{\Sigma}_b$
  in the tree $T$.
  \begin{figure}[h]
    \psfrag{*}{$*$}
    \psfrag{pa}{$p_{a_+}$}
    \psfrag{pb}{$p_b$}
    \psfrag{pb+}{$p_{b_+}$}
    \psfrag{pb-}{$p_{b_-}$}
    \psfrag{pa-}{$p_{a_-}$}
    \psfrag{Sa}{$\uc{\Sigma}_a$}
    \psfrag{Sb}{$\uc{\Sigma}_b$}
    \centering
    \includegraphics{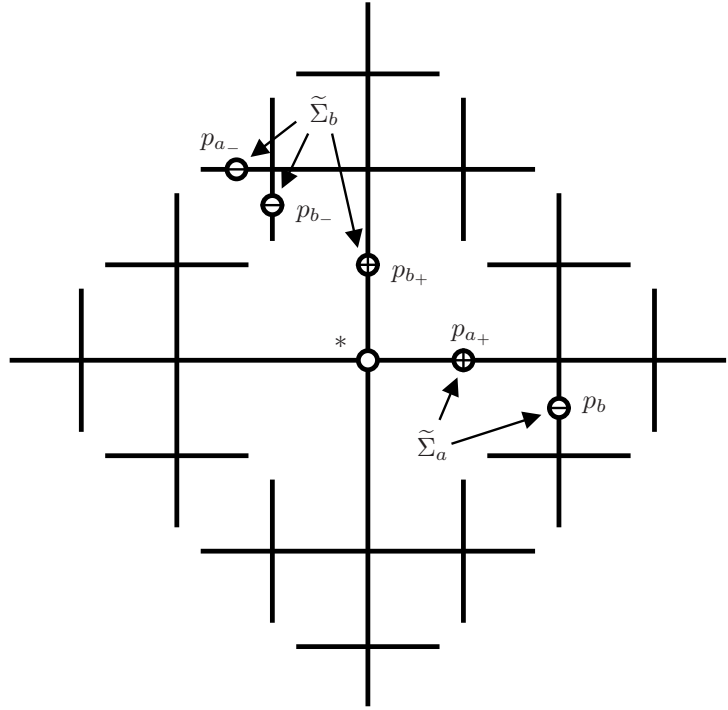}
    \caption{The sets $\uc{\Sigma}_a$ and $\uc{\Sigma}_b$ in Example
      \ref{ex:map}.  The ``$+$'' points are displayed by $\oplus$ and
      the ``$-$'' points are displayed by $\ominus$. }
    \label{fig:lifts}
  \end{figure}
\end{example}

For $\tilde{q} \in \uc{\Sigma}_e$, let $e'_{\tilde{q}}$ be the
oriented edge in $T'$ that contains $\tilde{q}$ and such that the
initial segment of $e'_{\tilde{q}}$ is contained in
$\tilde{\gamma_{q}}$, i.e., $e'_{\tilde{q}}$ is oriented away from
$*'$.  A ray containing $e'_{\tilde{q}}$ represents an end in the box
$\clop{e'_{\tilde{q}}}$.  A ray originating from $*'$ is mapped by
$f'_\infty$ into $\clop{e}$ if and only if it intersects
$\uc{\Sigma}_e$ and the final point in $\uc{\Sigma}_e$ it intersects
is a ``$+$'' point.  Likewise, a ray originating from $*'$ is mapped
by $f'_\infty$ into the complement of $\clop{\bar{e}}$ if and only if
it does not intersects $\uc{\Sigma}_e$ or if the final point of
$\uc{\Sigma_e}$ it intersects is a ``$-$'' point.  Therefore as
$f_\infty$ is the inverse of $f'_{\infty}$ we can express
$f_\infty(\clop{e})$ as a union and difference of the boxes
$\clop{e'_{\tilde q}}$ for $\tilde{q} \in \uc{\Sigma}_e$.  See
Examples \ref{ex:algorithm1} and \ref{ex:algorithm}.

A component $X \subset T' - \uc{\Sigma}_e$ is assigned a sign ``$+$''
if $f'(X)$ is contained in the direction $\delta_e \subset T$.  Else,
the image $f'(X)$ is contained in the direction $\delta_{\bar{e}}
\subset T$ and we assign it a ``$-$'' sign.  In the following lemma we
show that along any ray, the signs of the components of $T' -
\uc{\Sigma}_e$ that the ray intersects alternate.

\begin{lemma}\label{lm:alternating}
  With the above notation, let $R\co [0,\infty) \to T'$ be a ray that
  originates at a vertex of $T'$ and suppose there are two different
  components $X_0,X_1 \subset T' - \uc{\Sigma_e}$ which both have the
  same sign and for some $0 \leq t_0 < t_1$ we have $R(t_0) \in X_0$,
  $R(t_1) \in X_1$.  Then there is a component $\widehat{X} \subset T'
  - \uc{\Sigma}_e$ and $\hat{t}$ with $t_0 < \hat{t} < t_1$ such that
  $R(\hat{t}) \in \widehat{X}$ and the sign of $\widehat{X}$ is
  opposite to that of $X_0$ and $X_1$.
\end{lemma}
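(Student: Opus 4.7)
The plan is to prove the stronger statement that the sign of the component of $T' - \uc{\Sigma}_e$ containing $R(t)$ flips at every value of $t$ for which $R(t) \in \uc{\Sigma}_e$. Granted this, the lemma follows immediately: since $X_0 \neq X_1$, the ray $R$ must cross $\uc{\Sigma}_e$ at some parameter $s \in (t_0, t_1)$; choosing $s$ to be the smallest such value, $R([t_0, s)) \subset X_0$. The component $\widehat{X}$ containing $R(s + \epsilon)$ for small $\epsilon > 0$ then has sign opposite to $X_0$, hence opposite to $X_1$, and $\hat{t} = s + \epsilon$ lies in $(t_0, t_1)$ provided $\epsilon$ is sufficiently small.

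To establish the flip at a crossing $\tilde{q} = R(s) \in \uc{\Sigma}_e$, first note that $\tilde{q}$ is in the interior of some edge $e' \subset T'$: since $p_e$ lies in the open interior of $e$ and $f'$ is cellular, no vertex of $T'$ can map to $p_e$. Consequently $R$ parametrizes a subsegment of $\mathrm{int}(e')$ on some open interval $(s - \delta, s + \delta)$ containing $s$. By the definition of a morphism in $\cv$, the restriction $f'|_{e'}$ is a linear parametrization of a tight edge path in $T$ which passes through $p_e$ precisely at $\tilde{q}$. The key observation is that this tight edge path must cross $p_e$ transversally: the crossing is realized by a single oriented occurrence of $e$ (or $\bar{e}$) in the path, so just before $\tilde{q}$ the image lies on one side of $p_e$ in $T - \{p_e\}$, and just after $\tilde{q}$ it lies on the other side; otherwise the path would reverse direction through part of $e$, violating tightness. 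Therefore $f'(R(s - \epsilon))$ and $f'(R(s + \epsilon))$ lie in opposite components of $T - \{p_e\}$ for all sufficiently small $\epsilon > 0$. Since the sign of a component of $T' - \uc{\Sigma}_e$ is determined by which component of $T - \{p_e\}$ its image lies in, the components containing $R(s - \epsilon)$ and $R(s + \epsilon)$ carry opposite signs.

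The main obstacle is the transversality step: one must convert the combinatorial fact that tight edge paths do not backtrack into the geometric statement that the tight path $f'(e')$ crosses $p_e$ from one direction of $T - \{p_e\}$ to the other, rather than touching $p_e$ and returning to the same side. Everything else—locating $\tilde{q}$ in the interior of an edge, verifying that no sign changes occur when $R$ passes through a vertex of $T'$ (a small enough neighborhood of such a vertex is disjoint from $\uc{\Sigma}_e$ and hence lies in a single component), and extracting $\widehat{X}$ and $\hat{t}$—is bookkeeping with the definitions.
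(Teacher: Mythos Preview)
Your proof is correct and follows essentially the same approach as the paper: both arguments locate a point of $\uc{\Sigma}_e$ on the segment between $R(t_0)$ and $R(t_1)$, observe it lies in the interior of an edge since $f'$ is cellular, and use local injectivity of $f'$ on edge interiors (equivalently, tightness of the image path) to conclude the components on either side carry opposite signs. Your version is more explicit about the transversality step, while the paper compresses it to the single clause ``as $f'$ is injective on the interior of edges,'' but the content is the same.
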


\begin{proof}
  Let $x_0 = R(t_0)$ and $x_1 = R(t_1)$ and consider the tight segment
  $\gamma$ connecting $x_0$ to $x_1$, this is a subsegment of the ray
  $R([0,\infty))$.  As $X_0$ and $X_1$ are different components of $T'
  - \uc{\Sigma}_e$, for some interior point $\hat{x}$ of $\gamma$
  necessarily $\hat{x} \in \uc{\Sigma_e}$.  As $f'$ maps vertices of
  $T'$ to vertices of $T$, the point $\hat{x}$ is in the interior of
  some edge $\hat{e} \subset T'$.  As $f'$ is injective on the
  interior of edges, there are points $\hat{x}_+,\hat{x}_-$ in
  $\hat{e}$ close to $\hat{x}$ that are in components of $T' -
  \uc{\Sigma}_e$ with opposite sign.  One of these points gives
  $R(\hat{t})$.
\end{proof}

Let $T'_e \subset T'$ be the subtree spanned by the points
$\uc{\Sigma_e}$.  If $e'$ is an edge of $T'$ that is not contained in
$T'_e$ and is oriented away from $T'_e$, then $f'_\infty(\clop{e'})$
is contained in either $\clop{e}$ or $\clop{\bar{e}}$.  Thus either
$f'_\infty(\clop{e'}) \cap \clop{e}$ or $f'_\infty(\clop{e'}) \cap
\clop{\bar{e}}$ is empty and therefore by Lemma \ref{lm:endinC}, the
edge $e'$ is not in the slice $\Core_e$.  Hence, the slice $\Core_e$
is contained in the subtree of interior edges of $T'_e$.  We will show
that the difference between the volume of $T'_e$ and $\Core_e$ is
bounded, at least when then morphism $f'\co T' \to T$ is the lift of a
train-track representative.

First, let's consider a situation where an interior edge of $T'_e$ is
not in $\Core_e$.  Without loss of generality, this implies that any
geodesic ray which contains this edge is eventually contained in a
component of $T' - \uc{\Sigma}_e$ that has a ``$+$'' sign.  This can
happen if there is a vertex of $T'_e$ such that all but one of its
adjacent edges contains a point in $\uc{\Sigma}_e$ which are a
terminal vertices of $T'_e$.  In this case we would like to remove a
neighborhood of this vertex and \emph{consolidate} the set of points,
repeating if necessary.  See Figure \ref{fig:consolidation}.

\begin{figure}[ht]
  \centering
  \includegraphics{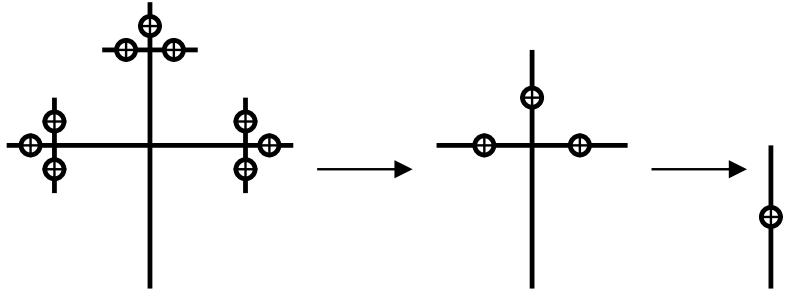}
  \caption{Consolidating vertices in $T'_e$.}
  \label{fig:consolidation}
\end{figure}

We now define this operation in detail and show that it terminates in
the slice of the core $\Core_e$.  Let $V_t(T'_e)$ denote the set of
terminal vertices of $T'_e$.  A vertex $v \in T'_e$ is \textit{full}
if the valence of $v$ in $T'_e$ equals the valence of $v$ in $T'$.

\begin{definition}\label{def:removable}
  Let $v \in T'_e$ be a full vertex with adjacent edges
  $\hat{e},e_1,\ldots,e_k$ and suppose $\tv(e_i) \in V_t(T'_e)$ for $i
  = 1,\ldots,k$ but $\tv(\hat{e}) \notin V_t(T'_e)$.  Then $v$ is a
  \textit{removable} vertex if $\{\tv(e_1),\ldots,\tv(e_k)\} \subset
  \uc{\Sigma}_e$.
\end{definition}

At a removable vertex $v$, subdivide the edge $\hat{e}$ into
$\hat{e}_0\hat{e}_1$ and denote the subdivision vertex by
$p_{\hat{e}}$.  Since the image of any small open neighborhood of $v$
is not contained in a single edge of $T$, the edge $\hat{e}$ does not
contain any points in $\uc{\Sigma}_e$.  Remove the $(k+1)$--pod
$\hat{e}_0,e_1,\ldots,e_k$, leaving the vertex $p_{\hat{e}}$.  Delete
the points $\{\tv(e_1),\ldots,\tv(e_k)\}$ from $\uc{\Sigma}_e$ and add
the point $p_{\hat{e}}$.  In this new subtree $p_{\hat{e}}$ is a
terminal vertex.  This changes the components of $T' - \uc{\Sigma}_e$.
The components of $T' - \{\tv(e_i) \}$ which do not contain $v$ are
combined with the $(k+1)$--pod $\hat{e}_0,e_1,\ldots,e_k$.  The
signs of all of these components were the same before the operation,
and we assign this sign to the new component.

This process may create removable vertices, specifically the vertex
$\tv(\hat{e})$.  Notice that for this new collection of points
$\uc{\Sigma}_e$ and components of $T' - \uc{\Sigma}_e$, Remark
\ref{rm:unbounded} and Lemma \ref{lm:alternating} still hold.

Repeat this process until there are no removable vertices.  As $T'_e$
is a finite subtree and at each step we remove a finite subtree that
does not disconnect the new space, this process will terminate with a
subtree $Y'_e$.  In the following lemma we collect some elementary
properties of the subtrees that are removed.

\begin{lemma}\label{lm:subtreeA}
  With the above notation, suppose that $A$ is a connected component
  of $T'_e - Y'_e$.  Then
  \begin{itemize}
  \item[1.] all but one of the terminal vertices of $A$ is a terminal
    vertex in $T'_e$ and belongs to the set $\uc{\Sigma}_e$; and

  \item[2.] $\uc{\Sigma}_e \cap A \subset V_t(A)$.
  \end{itemize}
\end{lemma}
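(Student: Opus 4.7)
The plan is to induct on the number of consolidation steps whose removed pods end up inside $A$. Recall that each step removes, from some removable vertex $v$, a $(k+1)$-pod consisting of $v$, the edges $\hat{e}_0,e_1,\ldots,e_k$, and the terminal vertices $\tv(e_1),\ldots,\tv(e_k)$, while introducing the subdivision point $p_{\hat{e}}$ as a new terminal vertex of the remaining subtree (and of the current $\uc{\Sigma}_e$). The component $A$ is then a union of such pods, glued along former subdivision points $p_{\hat{e}^{(j')}}$ whenever a later step absorbs one of them as a leaf.

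For the base case, if only one step contributes to $A$, then $\overline{A}$ is a single $(k+1)$-pod. By Definition \ref{def:removable} each $\tv(e_i)$ is in $V_t(T'_e)\cap\uc{\Sigma}_e$, and the remaining terminal vertex $p_{\hat{e}}$ is by construction an interior point of the edge $\hat{e}$ of $T'_e$ and is not an original $\uc{\Sigma}_e$ point. Both (1) and (2) follow immediately. For the inductive step, suppose $A$ results from $j>1$ steps and focus on the last one, at $v_j$ with edges $\hat{e}^{(j)},e^{(j)}_1,\ldots,e^{(j)}_{k_j}$. Each $\tv(e^{(j)}_i)$ lies in the then-current $\uc{\Sigma}_e$, so is either an original terminal vertex of $T'_e$ (hence in original $\uc{\Sigma}_e$ and terminal in $A$) or a previous subdivision point $p_{\hat{e}^{(j')}}$; in the latter case the component $A^{(j')}$ containing that earlier pod merges with the current pod. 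The inductive hypothesis applied to $A^{(j')}$ says all its terminal vertices except $p_{\hat{e}^{(j')}}$ already satisfy (1)--(2), and the merging along $p_{\hat{e}^{(j')}}$ simply turns that single former exit point into an interior point of $A$, replacing it with the new exit point $p_{\hat{e}^{(j)}}$. Thus (1) is preserved.

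Part (2) requires a slightly separate argument, to handle the possibility that a non-terminal original $\uc{\Sigma}_e$ point $x$ (an interior $\uc{\Sigma}_e$ point of $T'_e$) might enter $A$. The key observation is that at each consolidation step we delete from $\uc{\Sigma}_e$ only points that are terminal in the \emph{current} subtree; so an original $\uc{\Sigma}_e$ point that becomes part of $A$ must, at the moment of its deletion, already be a terminal vertex of the intermediate subtree. Hence $x$ is one of the $\tv(e^{(j)}_i)$'s for some step, and after that step $x$ is a leaf of the pod it belongs to. Since pods glue only at former subdivision points $p_{\hat{e}^{(j')}}$, never at the original $\tv(e^{(j)}_i)$'s, $x$ remains a leaf in the fully merged tree $A$, giving $x\in V_t(A)$.

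The main delicacy is the bookkeeping in part (2): one has to verify that the way pods are glued in the inductive step cannot promote an original $\uc{\Sigma}_e$ point from terminal to interior in $A$. This is guaranteed because the only points along which two pods are glued are the intermediate subdivision points $p_{\hat{e}^{(j')}}$, and these are never elements of the original $\uc{\Sigma}_e$ (they are introduced by subdividing an edge $\hat{e}^{(j')}$ of $T'$ whose interior contains no original $\uc{\Sigma}_e$ point, by the at-most-one-point-per-edge property of $\uc{\Sigma}_e$ and the fact that $\tv(\hat{e}^{(j')})\notin V_t(T'_e)$). With this observation the induction closes and both claims follow.
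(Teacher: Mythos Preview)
Your inductive bookkeeping is essentially correct and establishes the lemma, but it takes a rather different route than the paper. The paper argues both parts by global properties of the construction rather than by tracking pods. For (1) it simply observes that $Y'_e$ is connected (each consolidation step removes a terminal pod), so a component $A$ of $T'_e-Y'_e$ can touch $Y'_e$ in at most one point; since every terminal vertex of $T'_e$ lies in $\uc{\Sigma}_e$, the conclusion follows in one line. For (2) the paper assumes an interior $\uc{\Sigma}_e$ point $x$ lies on an edge $e'\subset A$, orients $e'$ away from the unique exit, and notes that then the component of $T'-\uc{\Sigma}_e$ containing $\tv(e')$ is bounded, contradicting Remark~\ref{rm:unbounded}. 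So the paper's argument is short and conceptual, while yours is constructive and makes the tree-of-pods structure of $A$ explicit; the latter has the advantage that it tells you exactly what $A$ looks like.

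Two small points about your write-up. First, your parenthetical justification that $\hat e^{(j')}$ contains no original $\uc{\Sigma}_e$ point (``by the at-most-one-point-per-edge property \ldots and $\tv(\hat e^{(j')})\notin V_t(T'_e)$'') does not actually prove this; those two facts alone allow an interior $\uc{\Sigma}_e$ point in $\hat e^{(j')}$. The correct reason is the one stated in the paper just before the lemma: since the image of a small neighborhood of $v$ is not contained in a single edge of $T$, the edge $\hat e$ has no $\uc{\Sigma}_e$ points. Second, your argument for (2) tacitly assumes that every original $\uc{\Sigma}_e$ point in $A$ is eventually deleted (hence is a leaf of some pod). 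To close this you should also note that the edges $e^{(j)}_i$ of each pod contain no \emph{interior} $\uc{\Sigma}_e$ points: either $\tv(e^{(j)}_i)$ is an original terminal vertex of $T'_e$, in which case $e^{(j)}_i$ is a half of a $T'$-edge whose unique $\uc{\Sigma}_e$ point is that endpoint, or $\tv(e^{(j)}_i)=p_{\hat e^{(j')}}$ and the ambient $T'$-edge is $\hat e^{(j')}$, which carries no $\uc{\Sigma}_e$ points. With these two remarks your induction is complete.
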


\begin{proof}
  If $A$ has two terminal vertices that are not terminal vertices of
  $T'_e$, then $Y'_e$ is not connected, which is a contradiction.  As
  all of the terminal vertices of $T'_e$ belong to $\uc{\Sigma}_e$,
  1. holds.

  Suppose that $e' \subset A$ contains a point of $\uc{\Sigma}_e$ that
  is not a terminal vertex of $T'_e$.  Orient $e'$ to point away from
  the terminal vertex of $A$ that is not a terminal vertex of $T'_e$.
  Then the component of $T'_e - \uc{\Sigma}_e$ which contains the
  vertex $\tv(e')$ is bounded, which is a contradiction, hence
  2. holds.
\end{proof}

Let $Z'_e$ denote the subtree of edges in $Y'_e$ not adjacent to
valence one vertices, i.e., the edges of $Y'_e$ that are edges in
$T'$.

\begin{lemma}\label{lm:signedrays}
  For any oriented edge $e' \subset Z'_e$ there are rays $R^+\co
  [0,\infty) \to T'$ and $R^-\co[0,\infty) \to T'$ containing $e'$
  with the specified orientation, such that $f'_\infty(R^+_\infty) \in
  \clop{e}$ and $f'_\infty(R^-_\infty) \in \clop{\bar{e}}$.
\end{lemma}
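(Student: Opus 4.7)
The plan is to exploit the equivalence arising from the analysis in Section 4.1 that for any ray $R$ in $T'$, one has $f'_\infty(R_\infty) \in \clop{e}$ precisely when $R$ eventually lies in a ``$+$'' component of $T' - \uc{\Sigma}_e$, and analogously $\clop{\bar{e}}$ corresponds to a ``$-$'' component. Hence the lemma reduces to exhibiting forward extensions of $e'$ past $v := \tv(e')$ whose eventual component has each sign.

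I would work with the forward subtree $Y'_v \subseteq Y'_e$ reachable from $v$ without traversing $e'$. Because $e' \in Z'_e$, the vertex $v$ is non-terminal in $Y'_e$, so $Y'_v$ properly contains $v$, and any terminal vertex $w$ of $Y'_v$ different from $v$ is a terminal vertex of $Y'_e$ and therefore lies in $\uc{\Sigma}_e$; moreover, the component of $T' - \uc{\Sigma}_e$ immediately on the outside-of-$Y'_e$ side of such a $w$ inherits the sign of $w$. The crux is a sub-claim asserting that both signs $+$ and $-$ occur among these terminal vertices of $Y'_v$.

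To prove the sub-claim, assume for contradiction that every terminal vertex of $Y'_v$ different from $v$ has sign ``$+$''. Choose, inside $Y'_v$ rooted at $v$, a non-leaf vertex $u$ at maximal distance from $v$, so that every neighbor of $u$ in $Y'_v$ except the one on the path back toward $v$ is a leaf. The $Y'_e$-incident edges at $u$ then fit the removability template of Definition \ref{def:removable}: one edge (the parent, or $e'$ itself in case $u = v$) goes to a non-terminal vertex, and the remaining incident $Y'_e$-edges go to terminal vertices of $Y'_e$ lying in $\uc{\Sigma}_e$. Because $Y'_e$ has no removable vertices, $u$ cannot be full in $Y'_e$; thus there exists a direction at $u$ in $T'$ missing from $Y'_e$. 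I would then analyze that missing direction in two cases: either it points entirely out of $T'_e$, in which case rays in this direction remain inside the component of $T' - \uc{\Sigma}_e$ containing $u$, whose sign is opposite to that of its adjacent ``$+$'' leaves by Lemma \ref{lm:alternating}, namely ``$-$''; or it points into a consolidated component $A$ of $T'_e \setminus Y'_e$, whose surviving $\uc{\Sigma}_e$-terminals have signs constrained by the sign rule used when $A$ was removed, again forcing a ``$-$'' component accessible from $u$. Either alternative contradicts the uniform ``$+$'' hypothesis.

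Given the sub-claim, pick $w^+, w^- \in V_t(Y'_v) \setminus \{v\}$ with the two signs, and build $R^\pm$ as follows: start at $\iv(e')$, cross $e'$ into $v$, follow the unique geodesic in $Y'_v$ from $v$ to $w^\pm$, then continue past $w^\pm$ into the outside-of-$Y'_e$ component at $w^\pm$, which by construction has sign $\pm$ and is unbounded by Remark \ref{rm:unbounded}. Extending into this component to infinity yields a ray whose eventual component has sign $\pm$, so $f'_\infty(R^+_\infty) \in \clop{e}$ and $f'_\infty(R^-_\infty) \in \clop{\bar{e}}$. The main obstacle is the sub-claim, and in particular its ``not-full'' case, where one must carefully track the signs propagated through the consolidated subtrees $A$ supplied by Lemma \ref{lm:subtreeA}.
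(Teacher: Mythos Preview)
Your argument has a genuine logical gap: the sub-claim is false in general, and your contradiction argument does not establish it anyway.

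For a counterexample to the sub-claim, take $T'_e = Y'_e$ to be a path $a\,\text{--}\,b\,\text{--}\,c\,\text{--}\,d\,\text{--}\,e$ with $\uc{\Sigma}_e = \{a,e\}$, both having ``$+$'' outside components, where $b$ and $d$ have valence $\geq 3$ in $T'$ (so not full, hence not removable) and $c$ has valence $2$.  Then $Z'_e$ consists of the edges $bc$ and $cd$.  For $e' = cd$ oriented toward $d$, the forward subtree $Y'_v$ is the single edge $de$, whose only terminal $e$ has ``$+$'' outside.  So the sub-claim fails.  More to the point, your contradiction step produces a ``$-$'' \emph{direction} at $u$ (here $u = d$), not a ``$-$'' terminal of $Y'_v$; that simply does not contradict the ``uniform $+$'' hypothesis on terminals.

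That said, the ingredients you have assembled \emph{do} prove the lemma once you drop the sub-claim and argue directly.  In the all-``$+$'' case you get $R^+$ by running through any terminal and you get $R^-$ from the extra direction at your non-full vertex $u$: that direction lies outside $T'_e$ (your second case never arises, since any $T'_e$-direction at a surviving vertex is still present in $Y'_e$, possibly as a half-edge to some $p_{\hat e}$), so a ray there stays in the component of $T' - \uc{\Sigma}_e$ containing $u$, which has sign ``$-$'' by alternation.  The all-``$-$'' case is symmetric, and the mixed case is immediate.  The paper's own proof is organized differently --- it argues the contrapositive, assuming every forward ray lands in $\clop{\bar e}$ and showing that the entire forward piece of $T'_e$ past $\iv(e')$ coincides with a single component of $T' - \uc{\Sigma}_e$ restricted there, so all its vertices are full and it is removed in forming $Y'_e$ --- but your direct approach, once repaired, is a legitimate alternative.
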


\begin{proof}
  Let $e' \subset T'_e$ and assume that for every ray $R$ containing
  $e'$ we have $f'_\infty(R_\infty) \in \clop{\bar{e}}$.  Let $A$ be
  the component of $T'_e - \iv(e')$ that contains $e'$.  In
  particular, components of $T' - \uc{\Sigma}_e$ contained in
  $\clop{e'}$ that are adjacent to $A$ have sign ``$-$''.  We will
  show that $A$ is removed from $T'_e$ in the construction of $Z'_e$
  and hence $e'$ is not in $Z'_e$.
 
  Let $X$ be the component of $T' - \uc{\Sigma}_e$ that contains
  $\tv(e')$ and $X_0$ the component of $X - \{ \iv(e) \}$ that
  contains $e'$.  We claim that $A = X_0$.

  If there is a point in $A$ that is not in $X_0$, then $A$ contains a
  point in $\uc{\Sigma}_e$ that is not a terminal vertex of $A$.  Let
  $x$ be such a point that is closest to a terminal vertex of $A$ and
  $X'$ the component of $T' - \uc{\Sigma}_e$ adjacent to both $x$ and
  a terminal vertex of $A$.  As this component is unbounded, we can
  find a ray that contains $e'$ and is eventually contained in $X'$.
  By hypotheses, the sign of this component is ``$+$'' (as it is
  adjacent to a component with a ``$-$'' sign), and hence there is a
  ray $R$ containing $e'$ such that $f'_\infty(R_\infty) \in
  \clop{e}$, which contradicts our assumptions.

  If $X_0$ is bounded then every ray containing $e'$ intersects some
  point in $\uc{\Sigma}_e$.  Hence, if there is a point in $X_0$ that
  is not in $A$ then $X_0$ is unbounded and therefore $X$ has sign
  ``$-$''.  But then $X_0$ is also adjacent to a component of
  $T'-\uc{\Sigma}_e$ with a ``$+$'' sign and as before there is a ray
  $R$ containing $e'$ such that $f'_\infty(R_\infty) \in \clop{e}$,
  which contradicts our assumptions.

  Therefore as vertices of $X_0$ that are vertices of $T'$ are full,
  vertices of $A$ adjacent to terminal vertices of $A$ are removable.
  We can repeat to see that $A$ is removed by removing removable
  vertices and edges adjacent to valence one vertices.

  Hence, if $e' \subset Z'_e$, not every ray containing $e'$ is mapped
  by $f'_\infty$ to $\clop{e'}$.  Similarly, we see that for $e'
  \subset Z'_e$, not every ray that contains $e'$ is mapped by
  $f'_\infty$ to $\clop{e}$.
\end{proof}

\begin{lemma}\label{lm:slice}
  For any edge $e \subset T$ the slice $\Core_e$ of the core $\Core(T
  \times T')$ is the subtree $Z'_e \subset T'$.
\end{lemma}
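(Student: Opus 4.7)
The plan is to prove the two containments $Z'_e \subseteq \Core_e$ and $\Core_e \subseteq Z'_e$ using the characterization of slice membership provided by Lemma \ref{lm:endinC} (in the form of Remark \ref{rm:endinC}), applied with the roles of $T$ and $T'$ swapped so as to exploit the fixed morphism $f'\co T' \to T$.

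For $Z'_e \subseteq \Core_e$, I would take an edge $e' \subset Z'_e$ and invoke Lemma \ref{lm:signedrays} for each of its two orientations. This immediately produces four rays through $e'$ whose $f'_\infty$-images witness that each of the four sets $f'_\infty(\clop{e'}) \cap \clop{e}$, $f'_\infty(\clop{e'}) \cap \clop{\bar{e}}$, $f'_\infty(\clop{\bar{e}'}) \cap \clop{e}$ and $f'_\infty(\clop{\bar{e}'}) \cap \clop{\bar{e}}$ is non-empty, which by Lemma \ref{lm:endinC} gives $e' \in \Core_e$.

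For the reverse containment I would argue the contrapositive: given a full edge $e'$ of $T'$ with $e' \not\subset Z'_e$, I will exhibit an orientation of $e'$ for which $f'_\infty(\clop{e'})$ lies in a single box, violating one of the four intersection conditions and forcing $e' \notin \Core_e$. There are three cases, distinguished by the position of $e'$ relative to $T'_e$. First, if $e'$ meets $T'_e$ in at most a single vertex and contains no point of $\uc{\Sigma}_e$, then rays in $\clop{e'}$ oriented away from $T'_e$ cross no $\uc{\Sigma}_e$-point at all, so they remain in a single component of $T' \setminus \uc{\Sigma}_e$ of fixed sign. Second, if $e'$ contains a point $q \in \uc{\Sigma}_e$ in its interior (necessarily a terminal vertex of $T'_e$), then orienting $e'$ so that $\tv(e')$ lies on the side of $q$ outside $T'_e$, every ray in $\clop{e'}$ crosses $q$ as its final $\uc{\Sigma}_e$-crossing and then, by Remark \ref{rm:unbounded}, continues in a single unbounded outside component of fixed sign. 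Third, if $e'$ lies entirely inside $T'_e$ but is removed in the consolidation and leaf-trimming procedure building $Z'_e$, then $e'$ lies in some component $A$ of $T'_e \setminus Z'_e$; this is exactly the situation handled (contrapositively) by the proof of Lemma \ref{lm:signedrays}. Iterating Lemma \ref{lm:alternating} through the consolidation steps that remove $A$, I will show that all components of $T' \setminus \uc{\Sigma}_e$ adjacent to $A$ through the $\uc{\Sigma}_e$-terminal vertices of $T'_e$ carry a common sign; orienting $e'$ away from the unique vertex $v_*$ at which $A$ attaches to $Z'_e$ then forces every ray in $\clop{e'}$ to exit $A$ through one of these terminal vertices and into a component of that common sign.

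The main subtle point will be case (c) above: the bookkeeping must track how the sign assignment propagates through possibly many iterated consolidation steps. This rests on the fact that Lemma \ref{lm:alternating} forces the outward components at any removable vertex to share a common sign, which is precisely what makes the sign assigned to the merged component after each consolidation step well defined, and hence allows the argument from the proof of Lemma \ref{lm:signedrays} to apply uniformly throughout the subtree $A$ rather than only at a single removable vertex.
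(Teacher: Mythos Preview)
Your approach is essentially the paper's, organized differently. For $Z'_e \subseteq \Core_e$ both you and the paper invoke Lemma~\ref{lm:signedrays} together with Lemma~\ref{lm:endinC}. For $\Core_e \subseteq Z'_e$, the paper first recalls (from the paragraph preceding the consolidation discussion) that $\Core_e$ already lies in the interior edges of $T'_e$, disposing of your cases (a) and (b) at once; then for edges $e' \subset T'_e - Z'_e$ it invokes Lemma~\ref{lm:subtreeA}. Your three-case breakdown unpacks the same content: case (c) is exactly where Lemma~\ref{lm:subtreeA} supplies the structure (a removed component $A$ contains no interior $\uc{\Sigma}_e$-points, and all but one terminal vertex of $A$ is a terminal vertex of $T'_e$ lying in $\uc{\Sigma}_e$), after which Lemma~\ref{lm:alternating} forces the common sign on the outside components adjacent to $A$, as you sketch.

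One correction: your claim that case (c) is ``exactly the situation handled (contrapositively) by the proof of Lemma~\ref{lm:signedrays}'' is not right. That proof establishes the implication
\[
(\text{all rays through } e' \text{ in one orientation land in a single box}) \ \Longrightarrow\ (e' \notin Z'_e),
\]
whose contrapositive \emph{is} Lemma~\ref{lm:signedrays}, already used for the other inclusion. What you need in case (c) is the \emph{converse} implication, and that requires the separate structural argument (Lemma~\ref{lm:subtreeA}, or your iterated-consolidation bookkeeping). Your subsequent direct argument via Lemma~\ref{lm:alternating} is fine, but drop the appeal to the proof of Lemma~\ref{lm:signedrays} at that point.
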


\begin{proof}
  We have already seen that $\Core_e \subseteq T'_e$.  By Lemma
  \ref{lm:subtreeA} we see that for any edge $e' \subset T'_e - Z'_e$
  one of the sets $f_\infty(\clop{e'})$ or $f_\infty(\clop{\bar{e}'})$
  is contained in either $\clop{e}$ or $\clop{\bar{e}}$.  Therefore by
  Lemma \ref{lm:endinC} we have $\Core_e \subseteq Z'_e$.  By Lemmas
  \ref{lm:endinC} and \ref{lm:signedrays} we have $Z'_e \subseteq
  \Core_e$.  Hence $Z'_e = \Core_e$.
\end{proof}


\subsection{\texorpdfstring{Examples of $\Core(T \times T')$}{Examples
    of C(T x T')}}\label{ssc:examples}

We now present some examples of computing the map on ends and building
the core for some trees $T,T' \in \cv$.

\begin{notation}\label{no:box}
  We adopt the convention of using ``$\clop{e} + \clop{e'}$'' to
  denote the union of the two boxes $\clop{e},\clop{e'}$.  Also, when
  $\clop{e'}$ is contained within $\clop{e}$ we use ``$\clop{e} -
  \clop{e'}$'' to denote the set of ends contained within $\clop{e}$
  but not $\clop{e'}$.
\end{notation}

\begin{example}\label{ex:algorithm1}
  Let $T \in cv_2$ and $\phi \in \Aut(F_2)$ be as in Example
  \ref{ex:map}.  We can identify $T$ with the Cayley graph for $F_2$
  and the ends of $T$ with right-infinite words in $F_2$.  From Figure
  \ref{fig:lifts} we see the map on ends $f_\infty\co \bd T\phi \to
  \bd T$ is:
  \begin{align*}
    f_\infty\co \clop{a} \mapsto & \clop{a} - \clop{aB} \\
    \clop{b} \mapsto & \clop{b} - \clop{bAB} - \clop{bAA}
  \end{align*}
  In terms of right-infinite words, $f_\infty(\clop{a}) = \clop{a} -
  \clop{aB}$ is interpreted as saying that the right-infinite words
  starting with $a$ are taken homeomorphically by $\phi\inv$ to the
  set of right-infinite words starting with $a$ except those that
  start with $aB$.

  Also, from Figure \ref{fig:lifts}, we see that the subtrees $T'_a$
  and $T'_b$ do not have any removable vertices, therefore the slices
  $\Core_a,\Core_b$ of the core $\Core(T\phi \times T)$ are the
  respective subtrees with the ``half'' edges removed. Thus the slice
  $\Core_b$ is the edge $bA$ and the slice $\Core_a$ is the vertex
  $a\tilde{*}$.  This is not the complete description of the core in
  this case, there are some vertical edges ($v \times e$) that can be
  found by examining a homotopy inverse to $\sigma'$.  This does
  however give us all of the 2--cells in the core, hence we see
  $i(T\phi,T) = 1$.
\end{example}

\begin{example}\label{ex:algorithm}
  Let $T \in cv_3$ be the tree with all edge lengths equal to 1 and
  such that $\Gamma = T/F_3$ is the 3--rose whose petals are labeled
  $a,b,c$.  In this example we will use the above algorithm to find
  the map on ends $f_\infty \co \bd T \to \bd T\phi$ and build the
  slices of the core $\Core(T \times T\phi)$ for the automorphism
  $\phi \in \Aut(F_{3})$ given by $\phi(a)=baC,\phi(b)=cA,\phi(c)=a$
  where $F_{3} = \langle a,b,c \rangle$, here we use capital letters
  to denote the inverses of the generators.  The inverse of $\phi$ is
  the map $\phi\inv(a)=c,\phi\inv(b)=ab,\phi\inv(c)=bc$.  There is an
  obvious homotopy equivalence $\sigma'\co \Gamma \to \Gamma$
  representing $\phi\inv$.  Subdivide the edges of $\Gamma$ as in
  Example \ref{ex:map}.  We can assume that $\sigma'(p_a) = p_c$.
  Further subdivide $b_+ = b_1b_2$ and $b_- = b_3b_4$ with subdivision
  points $p_{b_+},p_{b_-}$ such that $\sigma'(p_{b_+}) = p_a$ and
  $\sigma'(p_{b_-}) = p_b$.  Likewise subdivide both $c_+ = c_1c_2$
  and $c_- = c_3c_4$ with subdivision points $p_{c_+},p_{c_-}$ such
  that $\sigma'(p_{c_+}) = p_b,\sigma'(p_{c_-}) = p_c$.  Our preferred
  paths are $\alpha_a = a_+,\alpha_b = b_+$ and $\alpha_c = c_+$.

  The preimages of $p_a,p_b,p_c$ are $\Sigma_a = \{p_{b_+} \}$,
  $\Sigma_b =\{p_{b_-},p_{c_+}\}$ and $\Sigma_c =
  \{p_{a},p_{c_-}\}$.  As $[\sigma'(b_1)] = [a_+] = \alpha_a$ we see
  that $f_{\infty}(\clop{a}) = \clop{b}$.  Then $[\sigma'(cAB_4)] =
  [bcCB_-] = b_+ = \alpha_b$ and $[\sigma'(c_1)] = [b_+] = \alpha_b$,
  hence $f_\infty(\clop{b}) = \clop{c} - \clop{cAB}$.  Finally we see
  $[\sigma'(a_+)] = [c_+] = \alpha_c$ and $[\sigma(aC_4)] = [cC_-] =
  c_+ = \alpha_c$ and hence $f_\infty(\clop{c}) = \clop{a} -
  \clop{aC}$.

  $F_k$--equivariance can be used to find the image of any other box,
  for example we will compute $f_{\infty}(\clop{B})$.  First notice
  that $\clop{B} = B(\neg\clop{b})$ where $\neg\clop{b}$ denotes the
  complement of $\clop{b}$ in $\bd T$.  Therefore $f_\infty(\clop{B})
  = f_\infty(B(\neg\clop{b})) = \phi(B)f_\infty(\neg\clop{b}) =
  aC(\clop{a} + \clop{A} + \clop{b} + \clop{B} + \clop{cAB} +
  \clop{C}) = \clop{aC} + \clop{B}$.

  From $f_\infty$ we see that the slice $\Core_a$ is empty ($a \times
  c$ is a twice-light rectangle), $\Core_c$ is a single vertex and
  $\Core_b$ is a single edge $cA$.  Hence $i(T,T\phi) = 1$.

  To get a more complicated example of the core, we look at $\Core(T
  \times T\phi^3)$.  Explicitly, $\phi^{3}$ is the automorphism:
  \begin{equation*}
    \begin{array}{rcl} 
      a & \mapsto & acABcAbaCAcAB \\
      b & \mapsto & baCacABaC \\
      c & \mapsto & cAbaCA.
    \end{array}    
  \end{equation*}
  The third power of the above map $f_\infty$ is the map on ends for
  $\phi^{3}$.  To find this map, we can either use the algorithm with
  the automorphism $\phi^{-3}$, or we can formally iterate
  $f_\infty$.  For example, to find $f^2_\infty(\clop{b}) =
  f_\infty(\clop{c} - \clop{cAB}) = f_\infty(\clop{c})
  -\phi(cA)f_\infty(\clop{B}) = \clop{a} - \clop{aC} -
  acAB(\clop{aC} + \clop{B}) = \clop{a} - \clop{aC} - \clop{acaBaC} -
  \clop{acABB}$.  From either procedure we find:
  \begin{align*}
    \clop{a} \mapsto & \clop{a} - \clop{aC} - \clop{acABB} -
    \clop{acABaC} \\
    f^3_\infty\co \clop{b} \mapsto & \clop{b} - \clop{baCA} -
    \clop{baCC} -
    \clop{baCacABaCB} - \clop{baCacABaCaC} \\
    & - \clop{baCacABaCbaCC} -
    \clop{baCacABaCbaCA} \\
    \clop{c} \mapsto & \clop{c} - \clop{cAB} - \clop{cAbaCAC} -
    \clop{cAbaCAA} - \clop{cAbaCAcAB}
  \end{align*}
  Using this map and Lemma \ref{lm:slice} we can build the slices of
  the core $\Core(T \times T\phi^3)$.  The sets $e \times \Core_{e}$
  for $e = a,b,c$ are displayed in Figure \ref{fg:corephi-6}.  The
  labeling of the edges and vertices is as follows: for each set $e
  \times \Core_e$, the bottom left vertex is shown.  For instance for
  the set $a \times \Core_a$, this vertex is $(*,a)$ where $*$ denotes
  the basepoint and $a$ denotes the image of the basepoint by $a$.
  The rest of the edges are labeled by their image in the rose
  $T/F_3$.  Hence the two vertices adjacent to $(*,a)$ are $(a,a)$ and
  $(*,ac)$.  Reading upward, the remaining vertices above $(*,a)$ are
  $(*,ac)$, $(*,acA)$, $(*,acAB)$ and $(*,acABa)$.  Looking at the
  image for $f_\infty(\clop{a})$, we see that the span of these
  vertices is the slice $\Core_a$.  Adding up the number of squares we
  see $i(T,T\phi^3) = 23$.

  The colors and arrows denote the identifications, the thick black
  lines are free edges.  As an example of the identifications let's
  look at the vertex on the bottom left, this vertex is $(*,a)$.  Then
  $c(*,a) = (c,\phi^{3}(c)a) = (c,cAbaC)$, which is the fifth vertex
  from the bottom on the right.  The identifications for the other
  vertices are found similarly.  The observant reader will note that
  $\chi(\Core(T \times T\phi^3)) = -2$ as expected.

  \begin{figure}[p]
    \psfrag{a}{{$a$}} \psfrag{b}{{$b$}} \psfrag{c}{{$c$}}
    \psfrag{A}{{$A$}} \psfrag{B}{{$B$}} \psfrag{C}{{$C$}}
    \psfrag{cA}{{$cA$}} \psfrag{ac}{{$ac$}} \psfrag{ba}{{$ba$}}
    \includegraphics[width=4in]{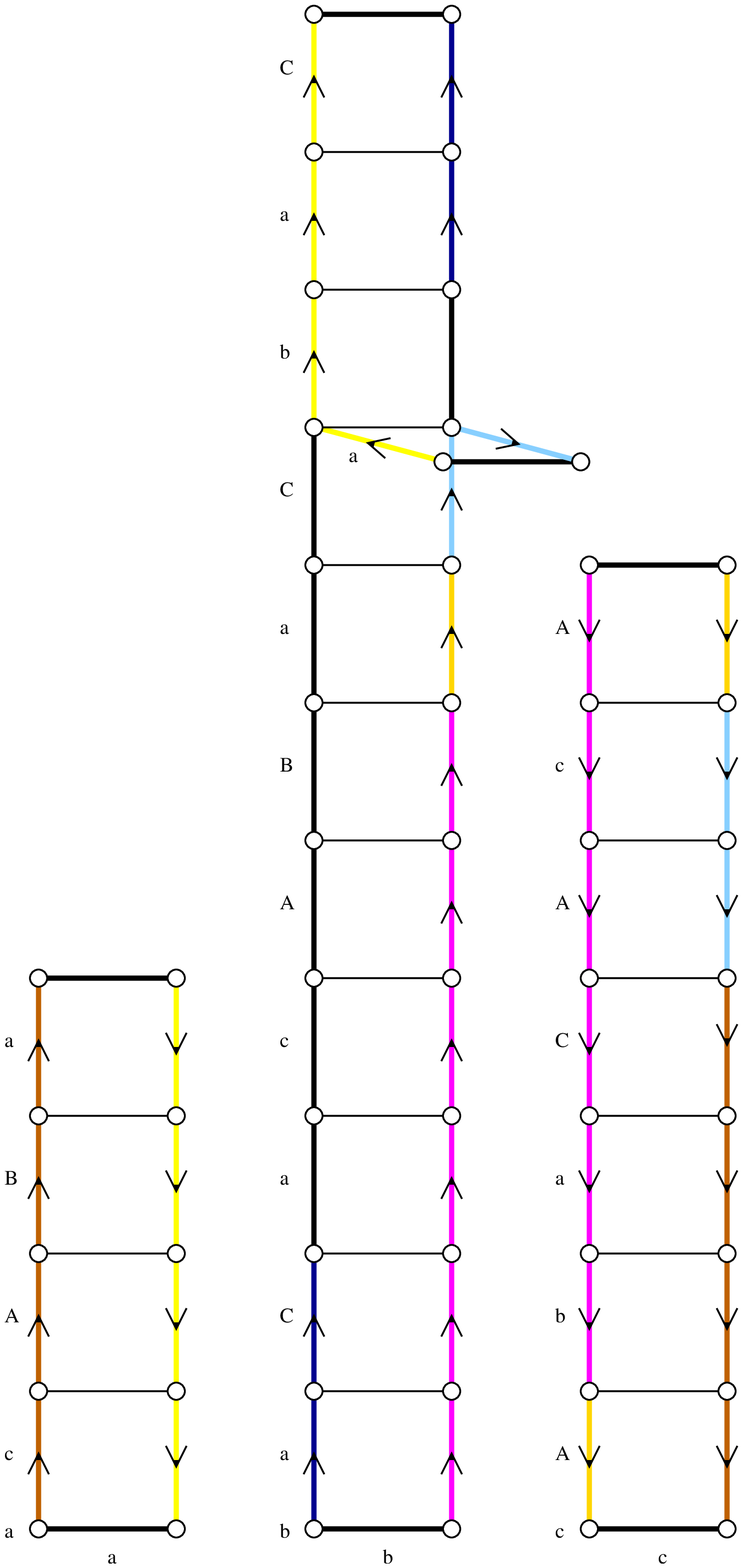}
    \caption{The core $\Core(T \times T\phi^{3})$ from Example
      \ref{ex:algorithm}.}\label{fg:corephi-6}
  \end{figure}
\end{example}


\section{Bounding consolidation}\label{sc:consolidation}

When the morphism $f\co T' \to T$ used in Section \ref{sc:slice} is
the lift of a train-track representative, we are able to control the
difference in volume between $T'_e$ and $\Core_e$ by bounding the
amount of consolidation that occurs when removing removable
vertices. Any train-track map satisfies the hypothesis used in Section
\ref{ssc:ends} that the image of a small open neighborhood of a vertex
is not contained in an edge.  We begin by showing that there is a
bound on the depth of consolidation.

\begin{lemma}\label{lm:boundedvol}
  Let $\sigma\co \Gamma \to \Gamma$ be a train-track map for a fully
  irreducible automorphism $\phi \in \Out(F_k)$ and $f\co T \to T$ a
  lift.  Let $e$ be an edge of $T$, $p_e$ an interior point of $e$,
  $T^n_e $ be the subtree spanned by $(f^n)\inv(p_e)$ and $Y^n_e$ the
  subtree found by iteratively removing removable vertices.  There is
  a constant $C \geq 0$, independent of $n$, such that any subtree $A$
  of $T^n_e$ that is removed in the formation of $Y^n_e$ has volume
  less than $C$.
\end{lemma}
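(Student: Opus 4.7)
The plan is to bound the diameter of a removed subtree $A$ and then conclude the volume bound from the fact that $T$, as the universal cover of the finite graph $\Gamma = T/F_k$, has uniformly bounded edge lengths and vertex valence; any subtree of bounded diameter in such a $T$ has bounded volume.

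First I would consider a pair of terminal vertices $v_i, v_j$ of $A$ both lying in $(f^n)^{-1}(p_e)$. By Lemma \ref{lm:subtreeA}, all but one terminal of $A$ is of this form, so such a pair exists whenever $A$ has at least two $\uc{\Sigma}_e$-terminals; the degenerate case would be reduced to this by examining the consolidation step that created $A$. The geodesic $\gamma_{ij} \subset A$ from $v_i$ to $v_j$ satisfies $f^n(v_i) = f^n(v_j) = p_e$, and since $T$ is a tree the tight form $[f^n(\gamma_{ij})]$ is trivial. Decomposing $\gamma_{ij} = \gamma_1 \gamma_2 \cdots \gamma_m$ at its illegal turns, each $\gamma_l$ is legal and hence $f^n(\gamma_l)$ is tight of length $\lambda^n |\gamma_l|$ by the train-track property. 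Iterated bounded cancellation, combined with the standard estimate $BCC(\sigma^n) \leq BCC(\sigma)(\lambda^n-1)/(\lambda-1)$, yields
\[
0 \;=\; |[f^n(\gamma_{ij})]| \;\geq\; \lambda^n |\gamma_{ij}| - 2(m-1)\,BCC(\sigma^n),
\]
which rearranges to $|\gamma_{ij}| \leq (m-1) \cdot 2BCC(\sigma)/(\lambda-1)$.

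The remaining task, and the main obstacle, is to bound $m$ uniformly in $n$. My plan is to invoke the Bestvina--Handel structure theory for train-track representatives of fully irreducible automorphisms: there are only finitely many $\sigma$-orbits of indivisible Nielsen paths (iNp's), and every tight path admits a canonical splitting into iNp-factors interleaved with legal segments. The critical-constant theorem, applied to triples $\gamma_{l-1}\gamma_l\gamma_{l+1}$, shows that each interior legal segment has length at most $C_{\mathrm{crit}} = 2BCC(\sigma)/(\lambda-1)$, and iNp's have bounded length by finiteness of orbits. The hard step is to use the vanishing $[f^n(\gamma_{ij})]=0$ to show that only boundedly many iNp-factors can appear in the splitting; I expect this to come from a telescoping argument noting that a chain of iNp's, iterated under $\sigma^n$, cannot cancel to zero because each iNp contributes a definite illegal turn that is preserved up to bounded cancellation.

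Finally, I would handle the unique non-$\uc{\Sigma}_e$ terminal $v_0$ by tracing the nested consolidation process that produced $A$: the branch from the $\uc{\Sigma}_e$-spanned subtree to $v_0$ corresponds to a chain of successively removable full vertices of $T$, and since there are only finitely many local configurations of full vertices modulo the $F_k$-action, this chain has uniformly bounded length. Combining these bounds gives $\mathrm{diam}(A) \leq C_0$ independent of $n$; the uniformly bounded valence and edge lengths of $T$ then imply $\vol(A) \leq C$, as required.
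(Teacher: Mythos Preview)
Your approach has a genuine gap at the step you yourself flag as ``the main obstacle'': bounding $m$. The proposed route through iNp structure theory cannot work, for two reasons. First, there is no canonical splitting of an arbitrary tight path into iNp-factors interleaved with legal segments; in the nongeometric case a stable train-track map has \emph{no} iNp at all (Theorem~\ref{th:geo-nongeo}), yet $n$-vanishing paths still carry many illegal turns. Second, and more fundamentally, $m$ is simply not bounded for a generic $n$-vanishing path: Propositions~\ref{prop:lb} and~\ref{prop:ngub} show that in the nongeometric case an $i$-vanishing path has length $\sim\lambdap^i$, hence (since each legal segment of a vanishing path is bounded by the critical constant) on the order of $\lambdap^i$ illegal turns. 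Your inequality $|\gamma_{ij}|\le (m-1)\cdot 2BCC(\sigma)/(\lambda-1)$ is correct but vacuous without an independent bound on $m$, and no such bound follows from the vanishing condition alone.

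What makes the lemma true is the combinatorics of the removed subtree $A$, which you have not used. The paper's argument is short: root $A$ at its unique non-$\uc{\Sigma}_e$ terminal, and at each interior vertex $w$ write the incident edges as $\hat e$ (toward the root) and $e_1,\dots,e_k$ (away). Since each $e_i$-direction in $A$ leads to a point of $(f^n)^{-1}(p_e)$ while meeting no other such point along the way (Lemma~\ref{lm:subtreeA}), $f^n$ sends every germ $e_i$ to the single direction from $f^n(w)$ toward $p_e$; hence every turn $\{e_i,e_j\}$ is illegal, and since each germ lies in some legal turn the turns $\{e_i,\hat e\}$ must be legal. A geodesic between two $\uc{\Sigma}_e$-terminals of $A$ therefore ascends legally toward the root, makes exactly one illegal turn at the branch point, and descends legally: $m=2$. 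Your own inequality (or the critical-constant argument the paper uses) then gives the diameter bound immediately, and your separate treatment of $v_0$ becomes unnecessary.
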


\begin{proof}
  Without loss of generality, we can assume that $A$ is a maximal
  subtree that is removed.  We will show that the diameter of $A$ is
  bounded; since the length of an edge in $T$ is bounded and the
  valence of any vertex in $T$ is bounded, this implies that the
  volume of $A$ is bounded.

  For any interior vertex $v \in A$, label the edges incident to $v$
  by $\hat{e},e_1,\ldots,e_k$ where $\hat{e}$ is the edge that leads
  to the unique terminal vertex of $A$ that is not a terminal vertex
  of $T^n_e$.  Any geodesic ray originating at $v$ and containing one
  of the $e_i$ contains exactly one point from $(f^n)\inv(p_e)$.

  We claim that the only legal turns are contained in the edge paths
  $\bar{e}_i\hat{e}$ for $i = 1,\ldots,k$.  A turn contained in
  $\bar{e}_ie_j$ is illegal since $f^n$ identifies the germs of the
  edges $e_i$ and $e_j$.  Since every edge must be in some legal turn
  we see that the turns in $\bar{e}_i\hat{e}$ are legal.

  Therefore, the geodesic containing any two terminal vertices of $A$
  has a single illegal turn.  Write this geodesic as $\beta \cdot
  \gamma$ where $\beta$, $\gamma$ are legal paths.  If the length of
  $\beta \cdot \gamma$ is more than twice the critical constant for
  $f$, then at least one of $\beta$ or $\gamma$ has length bounded
  below by the critical constant.  Therefore the length of $[f^m(\beta
  \cdot \gamma)]$ goes to infinity.  But since $[f^m(\beta \cdot
  \gamma)]$ is a point for $m \geq n$, we see that the diameter of $A$
  is bounded by twice the critical constant.
\end{proof}

\begin{proposition}\label{prop:comparevol}
  Let $\sigma\co \Gamma \to \Gamma$ be a train-track map for a fully
  irreducible automorphism $\phi \in \Out(F_k)$ and $f\co T \to T$ a
  lift.  Let $e$ be an edge of $T$, $p_e$ an interior point of $e$,
  $T^n_e$ the subtree spanned by $(f^n)\inv(p_e)$ and $\Core^n_e$ the
  slice of the core $\Core(T \times T\phi^n)$ above $e$.  Then there
  exist constants $K \geq 1$ and $C \geq 0$ such that for any $n \geq
  0$:
  \[\vol(\Core^n_e) \leq \vol(T^n_e) \leq K\vol(\Core^n_e) + C.\]
\end{proposition}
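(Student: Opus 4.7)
\medskip

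\textbf{Proof plan.} The left inequality $\vol(\Core^n_e) \leq \vol(T^n_e)$ is immediate from Lemma~\ref{lm:slice}, since $\Core^n_e = Z^n_e \subseteq Y^n_e \subseteq T^n_e$. For the right inequality, decompose
\[
\vol(T^n_e) = \vol(Z^n_e) + \vol(Y^n_e \setminus Z^n_e) + \vol(T^n_e \setminus Y^n_e).
\]
The first summand equals $\vol(\Core^n_e)$. The plan is to control the other two summands by the number $\ell$ of terminal vertices of $Y^n_e$, and then bound $\ell$ linearly in terms of $\vol(\Core^n_e)$ (up to an additive constant) using the bounded valence of $T$.

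The connected components of $T^n_e \setminus Y^n_e$ are the maximal subtrees $A$ removed during consolidation; Lemma~\ref{lm:boundedvol} gives a uniform bound $\vol(A) \leq C_0$. Each such component attaches to $Y^n_e$ at a single point $p_{\hat{e}}$ created during the removal step, and this point is a terminal vertex of $Y^n_e$; hence the number of components is at most $\ell$, and $\vol(T^n_e \setminus Y^n_e) \leq C_0 \ell$. On the other hand, $Y^n_e \setminus Z^n_e$ consists precisely of the edges of $Y^n_e$ adjacent to a terminal vertex of $Y^n_e$, so there are at most $\ell$ such edges, each of length at most the maximal edge length $l_{\max}$ of $T$; thus $\vol(Y^n_e \setminus Z^n_e) \leq l_{\max} \ell$.

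It remains to bound $\ell$. Since $T$ has bounded valence $D$ (it covers a finite graph), so does $Y^n_e$, and the standard tree identity $\ell - 2 = \sum_{v}(\deg_{Y^n_e}(v) - 2)$ (summed over non-terminal vertices, where only branch points contribute) gives $\ell \leq (D-2)\, b + 2$, where $b$ denotes the number of branch points of $Y^n_e$. I claim that either $Y^n_e$ is a single star (in which case $Z^n_e = \emptyset$ and $\ell \leq D$), or every branch point of $Y^n_e$ lies in $Z^n_e$: if a branch point $v$ had all incident edges in $Y^n_e \setminus Z^n_e$, then every incident edge of $v$ in $Y^n_e$ would lead to a terminal vertex, and since $Y^n_e$ is connected this forces $Y^n_e$ to be a star centered at $v$. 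In the non-star case, $b$ is at most the number of vertices of $Z^n_e$, which is at most $(\# E(Z^n_e)) + 1 \leq \vol(Z^n_e)/l_{\min} + 1$, where $l_{\min}$ is the minimal edge length of $T$.

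Combining, either $\vol(T^n_e) \leq D(l_{\max} + C_0)$, or
\[
\vol(T^n_e) \leq \vol(\Core^n_e) + (l_{\max} + C_0)\bigl[(D-2)(\vol(\Core^n_e)/l_{\min} + 1) + 2\bigr],
\]
and in both cases the desired inequality $\vol(T^n_e) \leq K\vol(\Core^n_e) + C$ holds for constants $K, C$ depending only on $D$, $l_{\min}$, $l_{\max}$ and the constant $C_0$ from Lemma~\ref{lm:boundedvol}. The main subtlety is the case analysis in the last step: one must verify that whenever $Z^n_e$ is so small that it cannot dominate $\ell$ in a linear fashion, the tree $Y^n_e$ is itself uniformly bounded (a star), so that $\vol(T^n_e)$ is absorbed by the additive constant.
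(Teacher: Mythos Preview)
Your proof is correct and follows essentially the same approach as the paper: both arguments use Lemma~\ref{lm:boundedvol} to bound the volume of each removed component, then use the bounded valence of $T$ together with an edge/vertex count on $Z^n_e$ to control the number of such components and the leftover half-edges in $Y^n_e\setminus Z^n_e$. The paper's bookkeeping is slightly different---it bounds $\vol(Y^n_e)$ by attaching at most $b$ edges of length $\leq M$ to every vertex of $Z^n_e$, and then bounds the number of removed subtrees by $b$ times the number of vertices of $Y^n_e$---whereas you route everything through the number $\ell$ of terminal vertices of $Y^n_e$ and the tree identity $\ell-2=\sum_v(\deg v-2)$. Your accounting is a touch sharper (each removed component really does attach at a distinct terminal vertex, so ``at most $\ell$'' is the right count rather than ``at most $b$ per vertex''), and your explicit treatment of the star case where $Z^n_e=\emptyset$ is a nice way to absorb the degenerate situation into the additive constant; the paper's version handles this implicitly via the ``$+1$'' in the vertex count. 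Substantively, though, the two proofs are the same.
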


\begin{proof}
  As before, denote by $Y^n_e$ the subtree of $T^n_e$ obtained by
  iteratively removing removable vertices and $Z^n_e$ the subtree of
  interior edges of $Y^n_e$.  Then by Lemma \ref{lm:endinC} we have
  that $Z^n_e = \Core^n_e$. Let $M$ and $m$ denote the length of the
  longest and shortest edges of $\Gamma$ respectively and let $b$
  denote the maximum valence of any vertex in $\Gamma$.  Then by
  adding at most $b$ edges of length $M$ to each vertex of $Z^n_e$, we
  can cover $Y^n_e$.  Since $Z^n_e$ has at most $\frac{\vol(Z^n_e)}{m}
  + 1$ vertices, this shows that there are constants $K_1 \geq 1$ and
  $C_1 \geq 0$ such that:
  \[\vol(\Core^n_e) \leq \vol(Y^n_e) \leq K_1\vol(\Core^n_e) + C_1.\]
  Therefore we only need to show that we can find constants $K_2,C_2$
  such that $\vol(T^n_e) \leq K_2\vol(Y^n_e) + C_2$.

  By Lemma \ref{lm:boundedvol}, $Y^n_e$ is obtained from $T^n_e$ by
  removing disjoint subtrees, all of whose volumes are bounded by a
  constant $K_3$.  Since the maximum number of these trees is
  $b(\frac{\vol(Y^n_e)}{m} + 1)$ we see that:
  $$\vol(T^n_e) \leq \vol(Y^n_e) + K_3b\left(\frac{\vol(Y^n_e)}{m} + 1\right) =
  \left(\frac{K_3b}{m} + 1\right)\vol(Y^n_e) + K_3b.$$ Hence the
  proposition follows.
\end{proof}

Combining the above results, we get a way to estimate the growth rate
of the intersection number for a fully irreducible automorphism.

\begin{proposition}\label{prop:estimate}
  Let $\phi$ be a fully irreducible automorphism and $f\co T \to T$ a
  lift of a train-track representative for $\phi$.  Then for any edge
  $e \subset T$ and trees $T',T'' \in \cv$:
  \[ i(T',T''\phi^n) \sim \vol(T^n_e) \] where $p_e$ is the midpoint
  of $e$ and $T^n_e \subset T$ is the subtree spanned by the points in
  $(f^n)\inv(p_e)$.
\end{proposition}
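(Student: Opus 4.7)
The plan is to chain together Propositions~\ref{prop:treeindependent} and~\ref{prop:comparevol} with the slice decomposition~\eqref{eq:slice_sum}, and then to verify that the volumes $\vol(T^n_{e'})$ all have the same asymptotic growth rate across the (finitely many) orbits of edges $e'$ of $T/F_k$.

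First, Proposition~\ref{prop:treeindependent} reduces the claim to showing $i(T,T\phi^n) \sim \vol(T^n_e)$, where $T$ is the specific tree on which $f$ is defined. Expanding $i(T,T\phi^n)$ using~\eqref{eq:slice_sum} and applying Proposition~\ref{prop:comparevol} at each slice gives
\[
  i(T,T\phi^n) = \sum_{e' \subset T/F_k} l_T(e')\vol(\Core^n_{e'}) \sim \sum_{e' \subset T/F_k} \vol(T^n_{e'}).
\]
Because the sum is over a finite set with bounded edge lengths, it is $\sim$-equivalent to $\max_{e'} \vol(T^n_{e'})$, so it suffices to show $\vol(T^n_{e'}) \sim \vol(T^n_e)$ for every pair $(e,e')$ with constants independent of $n$.

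For this last step I would invoke primitivity. Since $\phi$ is fully irreducible, the transition matrix $M$ of $\sigma$ is Perron--Frobenius, so some $M^{k_0}$ has strictly positive entries. In the tree this produces, for every pair $(e,e')$, a point $q$ of the finite set $(f^{k_0})\inv(p_e)$ lying in the interior of some lift of $e'$. Then $(f^n)\inv(q) \subset (f^{n+k_0})\inv(p_e)$, and up to the $F_k$-translation taking the midpoint of that lift to $p_{e'}$ (which preserves volumes), the sets $(f^n)\inv(q)$ and $(f^n)\inv(p_{e'})$ pair up into points at mutual distance at most $l_T(e')/\lambda^n$. The number of such pairs is $O(\lambda^n)$ by Perron--Frobenius, so the cumulative effect on the convex-hull volume is bounded by a constant, yielding
\[
  \vol(T^{n+k_0}_e) \geq \vol(T^n_{e'}) - C.
\]
A matching upper bound $\vol(T^{n+k_0}_e) \leq |(f^{k_0})\inv(p_e)| \cdot \max_{e''} \vol(T^n_{e''}) + O(1)$ comes from the decomposition $(f^{n+k_0})\inv(p_e) = \bigcup_{q \in (f^{k_0})\inv(p_e)} (f^n)\inv(q)$, since each piece has convex hull close in volume to some $T^n_{e''}$.

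Combining these two estimates gives $\vol(T^{n+k_0}_e) \sim \max_{e''} \vol(T^n_{e''})$ with constants not depending on $e$, which forces $\vol(T^m_{e_1}) \sim \vol(T^m_{e_2})$ uniformly for $m \geq k_0$; the finitely many values $m < k_0$ are absorbed into the additive constant. The main obstacle is precisely this matching-pair volume comparison: the pairwise distance decays like $\lambda^{-n}$ while the count of preimage points grows like $\lambda^n$, and one must verify that these two opposing effects cancel to leave only a bounded additive correction on convex-hull volumes, uniformly in $n$.
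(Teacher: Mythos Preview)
Your overall strategy---chaining Proposition~\ref{prop:treeindependent}, the slice formula~\eqref{eq:slice_sum}, and Proposition~\ref{prop:comparevol}---is exactly what the paper does. The one substantive step is the edge-independence claim $\vol(T^n_e)\sim\vol(T^n_{e'})$, and here the two approaches diverge. The paper simply defers this to Section~\ref{sc:growth}: there the asymptotics of $\vol(T^n_e)$ are computed explicitly (Propositions~\ref{prop:ngvolslice} and~\ref{prop:geo-vp} together with Theorem~\ref{th:geo}) and the answer visibly does not depend on $e$, so the parenthetical forward reference closes the loop. You instead attempt a self-contained Perron--Frobenius argument.

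Your lower bound is fine, and the ``main obstacle'' you flag is not actually the problem. Because $f^n$ maps vertices to vertices and is an immersion on each edge, the geodesic $f^n(\hat e)$ crosses an interior point $q$ of a given edge iff it crosses the midpoint $p_{e'}$ of that same edge; hence $(f^n)^{-1}(q)$ and $(f^n)^{-1}(p_{e'})$ pair off inside the same edges at distance $\le l_T(e')/\lambda^n$, and the elementary bound $|\vol(\mathrm{span}\,A)-\vol(\mathrm{span}\,A')|\le N\epsilon$ (when $|A|=|A'|=N$ and paired points are $\epsilon$-close) gives the bounded additive error you want.

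The genuine gap is in your upper bound. You assert $\vol(T^{n+k_0}_e)\le N\max_{e''}\vol(T^n_{e''})+O(1)$ from the decomposition $(f^{n+k_0})^{-1}(p_e)=\bigcup_q (f^n)^{-1}(q)$, but the span of a union of subtrees is the union of their spans \emph{together with connecting arcs}, and nothing in your argument bounds those arcs. The pieces $S_q=\mathrm{span}\bigl((f^n)^{-1}(q)\bigr)$ need not overlap, and the geodesic between a point of $S_{q_1}$ and one of $S_{q_2}$ can cross illegal turns, so its $f^n$-image backtracks and need not stay near $T^{k_0}_e$. One repair: reverse the decomposition, writing $(f^{n+k_0})^{-1}(p_e)=\bigcup_{q\in (f^n)^{-1}(p_e)}(f^{k_0})^{-1}(q)$; each such $(f^{k_0})^{-1}(q)$ lies within a fixed distance $D$ of $q$, so $T^{n+k_0}_e$ sits in the $D$-neighborhood of $T^n_e$, whose volume is $\le K\vol(T^n_e)+K'\lambda^n$ by counting exit points. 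Since distinct preimages of $p_e$ lie in distinct edges of $T$ (the image of each edge is a geodesic), one gets $\lambda^n\lesssim\vol(T^n_e)$ and the bound closes. So your direct route is repairable, but not with the decomposition and the unexplained $O(1)$ as written; the paper's forward reference sidesteps all of this.
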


\begin{proof}
  By Proposition \ref{prop:comparevol}, we have $\vol(T^n_e) \sim
  \vol(\Core^n_e)$ where $\Core^n_e$ is the slice of $\Core(T \times
  T\phi^n)$ above $e$.  Since $\phi$ is fully irreducible,
  $\vol(T^n_e) \sim \vol(T^n_{e'})$ for any edges $e,e' \subset T$.
  (This becomes clear in Section \ref{sc:growth} where we compute
  $\vol(T^n_e)$ up to $\sim$ equivalence independent of the edge $e$.)
  Thus $\vol(T^n_e) \sim \sum_{e' \subset T/F_k}
  l_T(e')\vol(\Core^n_{e'}) = i(T,T\phi^n) \sim i(T',T''\phi^n)$.  The
  final equivalence is from Proposition \ref{prop:treeindependent}.
\end{proof}

Using the techniques developed so far, we can get a lower bound on the
asymptotics of $n \mapsto i(T,T'\phi^n)$.

\begin{corollary}\label{co:lowerbound}
  Suppose $\phi$ is a fully irreducible automorphism and let $\lambda$
  and $\lambdap$ denote the growth rates of $\phi$ and $\phi\inv$
  respectively.  Then for any $T,T' \in \cv$, there exist constants $K
  \geq 1$ and $C \geq 0$ such that
  $\frac{1}{K}\max\{\lambda,\lambdap\}^n - C \leq i(T,T'\phi^n)$.
\end{corollary}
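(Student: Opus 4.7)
The plan is to apply Proposition \ref{prop:estimate}, count preimages of $p_e$ using the train-track transition matrix, and exploit the symmetry of intersection numbers to pick up the $\lambdap^n$ contribution from $\phi^{-1}$.

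First, by Proposition \ref{prop:estimate}, $i(T,T'\phi^n)\sim\vol(T^n_e)$ where $f\co T\to T$ is a lift of a train-track representative for $\phi$ and $T^n_e$ is the subtree spanned by $S_n:=(f^n)\inv(p_e)$.  The transition matrix $M$ of $\sigma$ has Perron--Frobenius eigenvalue $\lambda$, so each entry of $M^n$ grows like $\lambda^n$.  The number of preimages of $p_e$ under $\sigma^n$ in $\Gamma=T/F_k$ is $\sum_{e''\in E(\Gamma)}\bigl((M^n)_{e,e''}+(M^n)_{\bar{e},e''}\bigr)\sim\lambda^n$, and each such preimage lifts uniquely to an element of $S_n$.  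Hence $|S_n|\sim\lambda^n$.

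Second, since $f^n$ is linear on each edge of $T$ with tight image, its restriction to the interior of any edge is injective; so each edge of $T$ contains at most one preimage, and the $\sim\lambda^n$ preimages in $S_n$ lie on $\sim\lambda^n$ distinct edges of $T$.  Each such edge contributes a positive portion to $\vol(T^n_e)$.  Combining the bounded valence and bounded-below edge length in $T$ with Lemma \ref{lm:boundedvol} --- which uniformly bounds the volume of each subtree removed in the passage $T^n_e\to Y^n_e$, and hence the number of preimages such a subtree can contain --- one obtains $\vol(T^n_e)\geq c\cdot|S_n|$ for some constant $c>0$ independent of $n$.  Therefore $i(T,T'\phi^n)\gtrsim\lambda^n$.

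Third, for the $\lambdap^n$ bound: using symmetry and $\Out(F_k)$-invariance of intersection numbers, $i(T,T'\phi^n)=i(T\phi^{-n},T')=i(T',T\phi^{-n})$.  Applying the preceding two steps to $\phi\inv$, which is also fully irreducible with its own train-track representative of expansion factor $\lambdap$, yields $i(T',T\phi^{-n})\gtrsim\lambdap^n$.  Combining the two bounds gives $i(T,T'\phi^n)\gtrsim\max\{\lambda,\lambdap\}^n$, which is the desired estimate after absorbing a multiplicative constant into $K$ and an additive one into $C$.

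The principal obstacle is the volume estimate $\vol(T^n_e)\geq c\cdot|S_n|$.  A priori the $\sim\lambda^n$ preimages could all be leaves of $T^n_e$, each contributing only an arbitrarily small partial edge.  Ruling this out requires the train-track dynamics: bounded cancellation applied to $f^n$ forces preimages apart at a controlled minimum scale, and Lemma \ref{lm:boundedvol} ensures that consolidating leaves into $Y^n_e$ removes only a bounded number of preimages per bounded-volume subtree.  Together these show that, on average, each preimage is responsible for a uniformly positive share of $\vol(T^n_e)$.
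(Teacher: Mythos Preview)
Your overall strategy is exactly the paper's: reduce to $\vol(T^n_e)$ via Proposition~\ref{prop:estimate}, count $|S_n|\sim\lambda^n$ preimages using Perron--Frobenius, convert the preimage count to a volume lower bound, then use symmetry $i(T,T'\phi^n)=i(T',T\phi^{-n})$ to pick up $\lambdap^n$. Steps 1, 2, and 4 match the paper essentially verbatim.

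The problem is your treatment of step 3, the passage from $|S_n|$ to $\vol(T^n_e)$. Your claim that ``bounded cancellation applied to $f^n$ forces preimages apart at a controlled minimum scale'' is false: the bounded cancellation constant for $f^n$ grows with $n$, and indeed in the geometric case preimages cluster arbitrarily closely (this is precisely the phenomenon of Section~\ref{ssc:geo}, where the $\gamma_\epsilon$'s are $i$--vanishing paths of length $\to 0$ as $\epsilon\to 0$ with $i\to\infty$). Your detour through Lemma~\ref{lm:boundedvol} is also unnecessary and you do not actually complete the logic from it to $\vol(T^n_e)\geq c|S_n|$.

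The fix is the elementary combinatorial argument that the paper compresses into one line. You already named the right ingredients (bounded valence, edge length bounded below); you just need to use them directly. Since each point of $S_n$ lies in a distinct edge of $T$, and every vertex of $T$ has valence at most $b$, an Euler-characteristic count on $T^n_e$ shows that the number of \emph{full} edges of $T$ contained in $T^n_e$ is at least $(|S_n|-b)/(b-1)$: if $T^n_e$ has $V_T$ vertices of $T$, $N_t$ terminal vertices (all in $S_n$), and $E_{\mathrm{int}}$ full edges, then $E_{\mathrm{int}}=V_T-1$ and summing valences gives $N_t\le (b-2)V_T+2$, while $|S_n|-N_t\le E_{\mathrm{int}}$; combining yields $E_{\mathrm{int}}\ge (|S_n|-b)/(b-1)$. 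Each full edge has length $\ge m$, so $\vol(T^n_e)\ge m(|S_n|-b)/(b-1)\sim\lambda^n$. No bounded cancellation, no Lemma~\ref{lm:boundedvol}.
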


\begin{proof}
  By Proposition \ref{prop:estimate} we only need to find a lower
  bound for $\vol(T^n_e)$ where $T^n_e$ is the span of
  $(f^n)\inv(p_e)$ for a lift of a train-track map $f \co T \to T$.
  Since $\phi$ is fully irreducible and $f$ uniformly expands edges by
  $\lambda$, the cardinality of $(f^n)\inv(p_e)$ is $\sim \lambda^n$.
  Since each point of $(f^n)\inv(p_e)$ lies in a unique edge of
  $T^n_e$, there exists constants $K \geq 1$ and $C \geq 0$ such that
  $\frac{1}{K}\lambda^n - C \leq i(T,T\phi^n)$.  Repeating this
  argument for $f'\co T' \to T'$, a lift of a train-track map
  representing $\phi\inv$, we see that there are constants $K' \geq 1$
  and $C' \geq 0$ such that $\frac{1}{K'}\lambdap^n - C' \leq
  i(T',T'\phi^{-n})$.  Since $i(T',T'\phi^{n}) \sim i(T,T\phi^{-n})$,
  the result follows.
\end{proof}


\section{Growth Rates}\label{sc:growth}

In this section we compute the growth rate of $n \mapsto \vol(T^n_e)$
where $T^n_e$ is as in Proposition \ref{prop:estimate}.  There are two
cases depending on whether or not $T^+$, the stable tree of $\phi$, is
geometric.  Every fully irreducible automorphism admits a train-track
representative (called {\it stable}) with at most one orbit of
indivisible Nielsen paths (this was proved in
\cite[Lemma~3.2]{ar:BH92} in the case of period 1 but the proof in
general is identical).

Moreover the existence of an indivisible Nielsen path characterizes
whether $T^+$ is geometric.

\begin{theorem}[Theorem 3.2 \cite{un:BF}]\label{th:geo-nongeo}
  Let $\phi \in \Out(F_k)$ be a fully irreducible automorphism,
  $\sigma\co \Gamma \to \Gamma$ a stable train-track map and $T^+$ the
  stable tree for $\phi$.  Then $T^+$ is geometric if and only if
  $\sigma\co\Gamma\to\Gamma$ contains an indivisible orbit of Nielsen
  paths.
\end{theorem}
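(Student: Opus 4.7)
The plan is to prove both directions by analyzing $T^+$ as a metric quotient of $T$ and relating its identifications to Nielsen paths of $\sigma$.

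Recall that $T^+$ is the metric quotient of the pseudometric
$$d^+(x, y) = \lim_{n \to \infty} \lambda^{-n} d_T(f^n(x), f^n(y))$$
on $T$, where $f \co T \to T$ lifts $\sigma$; the resulting $F_k$-action makes $\phi$ a $\lambda$-homothety on $T^+$. A first observation is that $d^+(x, y) = 0$ precisely when the tight image in $\Gamma$ of $[x, y]$ is \emph{nullified}, i.e.\ $\lambda^{-n}\ell_\Gamma([\sigma^n[x,y]]) \to 0$. Thus any periodic iNp $\gamma$ with $[\sigma^p(\gamma)] = \gamma$ is automatically nullified (its iterates have constant length while $\lambda^{np} \to \infty$), so the endpoints of every lift of $\gamma$ are identified in $T^+$.

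For the ``if'' direction, given an iNp orbit $\gamma_0, \ldots, \gamma_{p-1}$ with $\gamma_i = \alpha_i\bar\beta_i$ (equal-length legal halves $\alpha_i, \beta_i$ of length $L_i$, meeting at an illegal turn), I build a 2-complex $K$ by attaching a rectangle $R_i = [0, L_i] \times [0, 1]$ to $\Gamma$ for each $i$, gluing the bottom side to $\alpha_i$ and the top to $\beta_i$. The complex $K$ carries a singular measured foliation: inside each $R_i$ the leaves are horizontal segments, while the edges of $\Gamma$ contribute transverse measure via their lengths. I would verify that the dual $\R$-tree $T_K$ coincides with $T^+$ by comparing $F_k$-translation length functions: the bands enforce exactly the identifications $d^+ = 0$ arising from the iNps, and translation lengths determine a minimal irreducible $F_k$-tree up to equivariant isometry, so $T_K \cong T^+$.

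For the ``only if'' direction, suppose $T^+$ is dual to a measured foliation $\mathcal{F}$ on a 2-complex $X$ with $\pi_1(X) = F_k$. The $\lambda$-homothety on $T^+$ is realized by a self-map $\Phi \co X \to X$ contracting transverse measures by $\lambda^{-1}$, and via a homotopy equivalence $X \simeq \Gamma$ this intertwines with $\sigma$. I would argue that each 2-cell of $X$ must be attached along a path in $\Gamma$ whose two boundary arcs become homotopic after iterating $\sigma$---otherwise the $\lambda^{-n}$-contraction would shrink the band to nothing---producing a periodic tight path $\gamma$ with $[\sigma^p(\gamma)] = \gamma$; an indecomposable subpath is the desired iNp. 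The principal obstacle lies in this direction: translating the measure-theoretic hypothesis of geometricity into the combinatorial existence of an iNp likely requires the Bestvina--Feighn (and Gaboriau--Levitt--Lustig) structure theory for stable $F_k$-trees to put $(X, \mathcal{F})$ into a normal form whose attaching data are readable as Nielsen paths, with full irreducibility of $\phi$ ensuring $T^+$ is indecomposable so that there is a single ``piece'' to analyze.
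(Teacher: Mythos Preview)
The paper does not prove this statement; it is quoted verbatim from Bestvina--Feighn (the reference \cite{un:BF}, their Theorem~3.2) and used as a black box for the rest of Section~\ref{sc:growth}. So there is no in-paper argument to compare your proposal against.

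Evaluating your attempt on its own merits, the ``if'' direction has a real gap. You build a band complex $K$ and assert that its dual tree is $T^+$ by matching translation length functions, but you never verify the crucial inclusion: that \emph{every} pair $x,y\in T$ with $d^+(x,y)=0$ lies in the same leaf of the lifted foliation on $\widetilde K$. Your bands only impose the identifications coming directly from the iNp's; showing that these \emph{generate} all the $d^+$--collapses is exactly the substantive content of the theorem in this direction. (Compare Proposition~\ref{prop:geo-vp} of this paper, which proves precisely such a decomposition statement for vanishing paths in the parageometric case --- and note that its proof is not trivial.) Without this, the translation-length comparison cannot be completed: computing $\ell_{T_K}(g)$ already presupposes knowing the leaf structure.

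Your ``only if'' direction is, as you acknowledge, essentially a sketch of what one would like to be true rather than an argument. The passage from a foliated $2$--complex realizing $T^+$ to a Nielsen path in $\Gamma$ is where the work lies, and the actual proof in \cite{un:BF} does not proceed by reading off attaching maps as you suggest. It goes through the Rips-machine/index machinery: geometric $F_k$--trees are characterized by having maximal Gaboriau--Levitt index $2k-2$, and the index of $T^+$ is computed directly from the combinatorics of a stable train-track representative, with the iNp (when present) contributing exactly the missing defect. Your outline does not touch this mechanism, and I do not see how to make the band-attaching heuristic rigorous without it.
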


We will not make use any of the additional properties that stableness
of a train-track map guarantees with the exception of Proposition
\ref{prop:parageo-map}.  These properties are mentioned within this
proposition.  See \cite{ar:BH92} for a definition.  The following
definition is of central importance to understanding the tree $T^n_e$
from Proposition \ref{prop:estimate}.

\begin{definition}\label{def:i-vp}
  An \textit{i--vanishing path for $\sigma\co \Gamma \to \Gamma$} ($i
  \geq 0$) is an immersion $\iota\co [0,1] \to \Gamma$ such that the
  image $\sigma^{i}\iota([0,1])$ is homotopic to a point relative to
  the endpoints.  We will always assume that $i$ is minimal.  If $f\co
  T \to T$ is a lift of $\sigma\co \Gamma \to \Gamma$, a
  \textit{i--vanishing path} is an embedding $\iota\co [0,1] \to T$
  such that $f^i\iota(0) = f^i\iota(1)$.  Clearly any $i$--vanishing
  path in $T$ projects to a $i$--vanishing path in $\Gamma$ and vice
  versa.  A \emph{vanishing path} is an $i$--vanishing path for some
  $i$.
\end{definition}

The importance of vanishing paths is given by the following remark.

\begin{remark}\label{rm:union-i-vp}
  Suppose $\sigma\co \Gamma \to \Gamma$ is a map and $f\co T \to T$ is
  a lift.  Let $T^n_e$ be the subtree spanned by $(f^n)\inv(p_e)$ for
  any edge $e \subset T$.  Then $T^n_e$ can be expressed as a union of
  $i$--vanishing paths where $i \leq n$.  Infact, the path joining any
  two points in $(f^n)\inv(p_e)$ is an $i$--vanishing path for some $i
  \leq n$.
\end{remark}

Therefore we are interested in examining the lengths of vanishing path.
As we demonstrate in Section \ref{ssc:geo}, every indivisible Nielsen
path $\gamma$ contains subpaths $\gamma_\epsilon$ that are
$i$--vanishing, for arbitrarily large $i$.  Note that all such
vanishing paths have uniformly bounded length.  In Section
\ref{ssc:nongeo} we will prove a converse to this observation in the
absence of indivisible Nielsen paths. Specifically, we show that:
\begin{itemize}
\item[1.] If $T^+$ is geometric then every vanishing path is a
  composition of suitable $\gamma_\epsilon$'s. See Proposition
  \ref{prop:geo-vp}.
\item[2.] If $T^+$ is nongeometric then there exist arbitrarily long
  vanishing paths that are not compositions of shorter vanishing
  paths. In fact, any $i$--vanishing path has length approximately
  $\lambdap^i$, where $\lambdap$ is the expansion factor for
  $\phi\inv$. See Proposition \ref{prop:ngub}.
\end{itemize}

Example \ref{ex:geom-nongeom} shows these different behaviors.  We
begin by examining the case when $T^+$ is nongeometric.


\subsection{\texorpdfstring{$T^+$ nongeometric}{T+
    nongeometric}}\label{ssc:nongeo}

\begin{convention}\label{con:ng}
  We fix some notation for use in the rest of this section: $\phi$ is
  a fully irreducible automorphism, $\sigma\co \Gamma \to \Gamma$ is a
  train-track map for $\phi$ with expansion factor $\lambda$,
  $\sigma'\co \Gamma' \to \Gamma'$ is a train-track map for $\phi\inv$
  with expansion factor $\lambdap$ and $\tau\co \Gamma \to \Gamma'$,
  $\tau'\co \Gamma' \to \Gamma$ are Lipschitz homotopy equivalences
  representing the change in marking.  This is summarized in the
  following commutative (up to homotopy) diagram:
  \begin{equation}\label{eq:diagram}
    \xymatrix{
      \cdots \ar[r]^{\sigma} & \Gamma \ar[r]^{\sigma} \ar@/_/ [d]_{\tau} & 
      \Gamma \ar[r]^\sigma \ar@/_/ [d]_{\tau} & 
      \Gamma \ar[r]^{\sigma} \ar@/_/ [d]_{\tau} & \cdots \\
      \cdots &  \ar[l]_{\sigma'} \Gamma' \ar@/_/ [u]_{\tau'} &  
      \Gamma' \ar[l]_{\sigma'} \ar@/_/ [u]_{\tau'} & 
      \Gamma' \ar[l]_{\sigma'} \ar@/_/ [u]_{\tau'} & \cdots \ar[l]_{\sigma'}}
  \end{equation}
\end{convention}

As stated above, we will show that when $\Gamma$ does not contain an
indivisible orbit of periodic Nielsen paths that the length of an
$i$--vanishing path for $\sigma$ is approximately $\lambdap^i$.  First
we give an upper bound on the length of an $i$--vanishing path.  The
following proposition does not depend on the absence of an indivisible
orbit of periodic Nielsen paths.

\begin{proposition}\label{prop:lb}
  Let $\sigma\co\Gamma\to \Gamma$ be as in Convention
  \ref{con:ng}. There exists a constant $K \geq 0$ such that for every
  $i$--vanishing path $\gamma$ for $\sigma\co\Gamma \to \Gamma$ we
  have $length(\gamma) \leq K\lambdap^i$.
\end{proposition}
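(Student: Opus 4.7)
The plan is to use the commutative diagram \eqref{eq:diagram} to construct an explicit free homotopy $H_i \co \Gamma \times [0,1] \to \Gamma'$ from $\tau$ to $(\sigma')^i\tau\sigma^i$ whose tracks (the paths $H_i(x,\cdot)$) have length at most $K_0\lambdap^i$ for a constant $K_0$ independent of $i$, and then to apply this homotopy along the $i$-vanishing path $\gamma$.

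Since $\tau$ and $\sigma'\tau\sigma$ both represent the identity in $\Out(F_k)$, they are freely homotopic; fix such a homotopy $H_1$ and let $\epsilon_1$ be the supremum of the lengths of its tracks, finite by compactness of $\Gamma$. I proceed by induction. Writing $(\sigma')^i\tau\sigma^i = \sigma' \circ \big[(\sigma')^{i-1}\tau\sigma^{i-1}\big] \circ \sigma$, define $H'_{i-1}(x,t) = \sigma'(H_{i-1}(\sigma(x),t))$; since $\sigma'$ scales lengths by at most $\lambdap$, each track of $H'_{i-1}$ has length at most $\lambdap \cdot \epsilon_{i-1}$. Concatenating $H_1$ with $H'_{i-1}$ produces $H_i \co \tau \simeq (\sigma')^i\tau\sigma^i$ with track bound $\epsilon_i \leq \epsilon_1 + \lambdap\epsilon_{i-1}$. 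Since $\lambdap > 1$, this recurrence gives $\epsilon_i \leq \epsilon_1 \cdot \frac{\lambdap^i - 1}{\lambdap - 1} \leq K_0\lambdap^i$ with $K_0 = \epsilon_1/(\lambdap - 1)$.

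Now I apply $H_i$ along $\gamma$. Because $\gamma$ is $i$-vanishing, the endpoints of $\sigma^i\gamma$ coincide, and consequently $(\sigma')^i\tau\sigma^i\gamma$ is a null-homotopic loop in $\Gamma'$. The tracks of $H_i$ at the two endpoints of $\gamma$ are paths $\delta_0, \delta_1$ of length at most $\epsilon_i$, and the rectangle that $H_i$ traces out along $\gamma$ gives a rel-endpoints homotopy between $\tau\gamma$ and $\delta_0 \cdot ((\sigma')^i\tau\sigma^i\gamma) \cdot \bar\delta_1$. Tightening and discarding the null-homotopic middle piece yields $length([\tau\gamma]) \leq 2\epsilon_i \leq 2K_0\lambdap^i$. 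Finally, since $\tau'$ is a Lipschitz homotopy equivalence with $\tau'\circ\tau$ freely homotopic to $\text{id}_\Gamma$ by a homotopy of bounded displacement, I conclude $length(\gamma) \leq K\lambdap^i$ for a suitable constant $K$.

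The main subtlety is the inductive construction of $H_i$: by conjugating the previous homotopy by $\sigma'$ on the target and by $\sigma$ on the source, rather than stringing together homotopies of $\Gamma$--self-maps that represent $\phi^{-1}$, I ensure the displacement grows by a factor of only $\lambdap$ per step. A naive approach would introduce Lipschitz constants of $\tau$, $\tau'$, or $\sigma$ into the exponential base, producing a bound strictly worse than $\lambdap^i$ and wrecking the subsequent analysis of the growth rate of the intersection number.
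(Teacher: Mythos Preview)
Your proof is correct and follows essentially the same approach as the paper: both bound $length([\tau(\gamma)])$ by a geometric series in $\lambdap$ using the homotopy $\tau \simeq \sigma'\tau\sigma$ together with the fact that $\sigma'$ is $\lambdap$-Lipschitz, and then transfer back to $\Gamma$ via $\tau'$. The only difference is packaging: you assemble the full homotopy $H_i$ from $\tau$ to $(\sigma')^i\tau\sigma^i$ first and read off the track bound, while the paper inducts directly on the inequality $length([\tau(\gamma)]) \leq length([\sigma'\tau\sigma(\gamma)]) + K_1$ using that $\sigma(\gamma)$ is an $(i-1)$-vanishing path.
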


\begin{proof}
  Since the composition $\sigma'\tau\sigma$ is homotopic to $\tau$ and
  the composition $\tau'\tau$ is homotopic to the identity map, there
  are constants $K_1,K_2\geq 1$, such that any path $\gamma$ in
  $\Gamma$ satisfies
  \begin{align}
    |length([\tau(\gamma)]) -
    length([\sigma'\tau\sigma(\gamma)])|&\leq K_1\mbox{ and}
    \label{eqn:homotopy2} \\
    |length(\gamma) - length([\tau'\tau(\gamma)])|&\leq
    K_2.\label{eqn:homotopy1}
  \end{align}
  For a $1$--vanishing path $\gamma$, using the inequality
  \eqref{eqn:homotopy2} we have $$length([\tau(\gamma)]) \leq
  length([\sigma'\tau\sigma(\gamma)]) + K_1 = K_1$$ as
  $length([\sigma(\gamma)]) = 0$.

  We now proceed by induction.  Suppose that the image under $\tau$ of
  any $(i-1)$--vanishing path has length at most
  $K_1\sum_{j=1}^{i-1}\lambdap^{j-1}$; note that the previous
  paragraph verified the base case, a $1$--vanishing path.  Now
  consider an $i$--vanishing path $\gamma$.  Then $\sigma(\gamma)$ is
  an $(i-1)$--vanishing path, and thus the inductive hypothesis yields
  $length([\tau\sigma(\gamma)]) \leq K_1\sum_{j=1}^{i-1}
  \lambdap^{j-1}$.  Since $\sigma'$ is a train-track map this implies
  $length([\sigma'\tau\sigma(\gamma)])\leq \lambdap
  K_1\sum_{j=1}^{i-1} \lambdap^{j-1}$.  Then \eqref{eqn:homotopy2}
  implies $length([\tau(\gamma)]) \leq
  length([\sigma'\tau\sigma(\gamma)])+ K_1$, thus yielding the
  following which completes our induction:
  \begin{equation*}
    length([\tau(\gamma)])\leq \lambdap K_1\sum_{j=1}^{i-1} \lambdap^{j-1} 
    +K_1= K_1\sum_{j=1}^{i}\lambdap^{j-1}.
  \end{equation*}
  Therefore $length([\tau'\tau(\gamma)]) \leq
  K_3K_1\sum_{j=1}^{i}\lambdap^{j-1}$ where $K_3$ is the Lipschitz
  constant for $\tau'$. As before, using \eqref{eqn:homotopy1} we have
  $$length(\gamma) \leq K_3K_1\sum_{j=1}^{i}\lambdap^{j-1}+K_2.$$
  Setting $K=K_3K_1\frac{1}{\lambdap-1} + K_2$ completes
  the proof.
\end{proof}

Using Proposition \ref{prop:lb} we can estimate the difference between
the homotopic maps $\tau$ and $\sigma'^n\tau\sigma^n$.

\begin{lemma}\label{lm:Kn}
  Let $\sigma\co\Gamma \to \Gamma$ be as in Convention \ref{con:ng}.
  Then there is a a constant $K \geq 0$ such that for any path $\gamma
  \subset \Gamma$ and $n \geq 0$ we have
  \[ length([\tau(\gamma)]) \geq
  length([\sigma'^n\tau\sigma^n(\gamma)]) - K\mu^n. \]
\end{lemma}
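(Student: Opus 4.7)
The plan is to bootstrap the single-step estimate \eqref{eqn:homotopy2} by iteratively constructing homotopies from $\tau$ to $\sigma'^k\tau\sigma^k$ and tracking the growth of their displacements. Since $\Gamma$ is compact and $\tau\simeq\sigma'\tau\sigma$, fix once and for all a homotopy $H_1\co\Gamma\times [0,1]\to\Gamma'$ from $\tau$ to $\sigma'\tau\sigma$ and let $D=\max_{x\in\Gamma} length(H_1(x,\cdot))$, which is finite. For $k\geq 2$ define $H_k(x,t)=\sigma'(H_{k-1}(\sigma(x),t))$; one checks immediately that $H_k$ is a homotopy from $\sigma'^{k-1}\tau\sigma^{k-1}$ to $\sigma'^k\tau\sigma^k$. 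Because $\sigma'$ is a train-track map, it is PL with uniform slope $\mu$ on each edge of the finite graph $\Gamma'$ and hence globally $\mu$-Lipschitz, so the maximum track length of $H_k$ is at most $\mu$ times that of $H_{k-1}$, and inductively at most $\mu^{k-1}D$.

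Concatenating $H_1,\ldots,H_n$ yields a homotopy from $\tau$ to $\sigma'^n\tau\sigma^n$ whose maximum displacement is bounded by $D(1+\mu+\cdots+\mu^{n-1})\leq D\mu^n/(\mu-1)$. Now fix a path $\gamma\subset\Gamma$ and pass to the universal covers $T$ of $\Gamma$ and $T'$ of $\Gamma'$, choosing equivariant lifts $\widetilde\tau$ and $\widetilde{\sigma'^n\tau\sigma^n}$ compatible with the lifted homotopy. For any lift $\widetilde\gamma$ of $\gamma$, the endpoints of $\widetilde\tau(\widetilde\gamma)$ and $\widetilde{\sigma'^n\tau\sigma^n}(\widetilde\gamma)$ in $T'$ lie within distance $D\mu^n/(\mu-1)$ of each other. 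Since $T'$ is a tree, tightening is simply taking the geodesic between the endpoints, so the two tightened lengths differ by at most $2D\mu^n/(\mu-1)$. Setting $K=2D/(\mu-1)$ gives the desired inequality (in fact in a two-sided form).

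The main point requiring care is the global $\mu$-Lipschitz property of $\sigma'$: this relies on $\sigma'$ being linear with uniform slope $\mu$ on every edge of the finite graph $\Gamma'$, and is precisely what forces the per-step displacement to grow by at most a factor of $\mu$. The $\mu^n$ in the conclusion is the geometric sum coming from this iterated dilation, exactly analogous to how the $\mu^i$ terms appeared in the proof of Proposition \ref{prop:lb}.
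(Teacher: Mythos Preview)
Your argument is correct and is genuinely different from the paper's proof. You build the homotopy from $\tau$ to $\sigma'^n\tau\sigma^n$ directly by iterating $H_k=\sigma'\circ H_{k-1}\circ(\sigma\times\mathrm{id})$ and use the $\mu$-Lipschitz property of the train-track map $\sigma'$ to control the track lengths; passing to the universal cover then gives a two-sided bound $|length([\tau(\gamma)])-length([\sigma'^n\tau\sigma^n(\gamma)])|\leq 2D\mu^n/(\mu-1)$. The only point worth making explicit is that the tracks of $H_1$ can be chosen rectifiable (e.g., take the straight-line homotopy in $T'$ and project), so that $D$ is indeed finite; after that the induction and the endpoint estimate in the tree are routine.

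The paper instead argues indirectly: for closed loops the two lengths agree exactly, and for a general path $\gamma$ it closes up $[\sigma^n(\gamma)]$ by a short arc $\alpha$, pulls this back to a loop $\gamma_0$, and must then analyze the correction path $\beta$ (with $\gamma\cdot\beta\simeq\gamma_0$) by decomposing it into a bounded number of $i$--vanishing paths ($i\leq n$) plus short segments, finally invoking Proposition~\ref{prop:lb} to bound $length([\tau(\beta)])$ by a multiple of $\mu^n$. Your approach is shorter, avoids the dependence on Proposition~\ref{prop:lb}, and yields the inequality in both directions; the paper's approach has the minor advantage of making the role of vanishing paths transparent, which is thematically consistent with the surrounding analysis.
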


\begin{proof}
  Since $\tau$ is homotopic to $\sigma'^n\tau\sigma^n$, if $\gamma$ is
  a closed loop then $length([\tau(\gamma)]) =
  length([\sigma'^n\tau\sigma^n(\gamma)])$.

  If $\gamma$ is not a loop, we can add a segment $\alpha$ of bounded
  length to $[\sigma^n(\gamma)]$ to get a closed loop $\gamma_1$ such
  that $[\sigma^n(\gamma)]$ is a subpath of $\gamma_1$ and $\alpha$ is
  an embedded arc.  Let $\gamma_0$ be a closed loop in $\Gamma$ such
  that $[\sigma^n(\gamma_0)] = \gamma_1$.  As $\alpha$ has bounded
  length, there is a constant $K_1$ such that
  $length([\sigma'^n\tau(\alpha)]) \leq K_1\mu^n$.  Hence:
  \begin{align*}
    length([\tau(\gamma_0)]) & = length([\sigma'^n\tau(\gamma_1)]) \\
    & \geq length([\sigma'^n\tau\sigma^n(\gamma)]) -
    length([\sigma'^n\tau(\alpha)]) \\
    & \geq length([\sigma'^n\tau\sigma^n(\gamma)]) - K_1\mu^n.
  \end{align*}
  There is path a $\beta$ in $\Gamma$ (unique up to homotopy) such
  that the concatenation of $\gamma$ and $\beta$ is homotopic to
  $\gamma_0$.  Notice that $[\sigma^n(\beta)] = \alpha$ as
  $[\sigma^n(\gamma)]$ is a subpath of $\gamma_1$. Now
  $length([\tau(\gamma)]) \geq length([\tau(\gamma_0)]) -
  length([\tau(\beta)])$.  We will show that $\beta$ is the union of a
  bounded number of $i$--vanishing paths where $i \leq n$ along with
  some segments of bounded length.

  Let $x$ and $y$ be the endpoints of $\alpha$ and $p$ and $q$ the
  endpoints of $\beta$ with $\sigma^n(p) = x$ and $\sigma^n(q) = y$.
  Now decompose $\beta = \beta_0 \cdot \beta_1$ by subdividing $\beta$
  at the point in $(\sigma^n)\inv(y)$ that is closest (along $\beta$)
  to $x$.  Then $\beta_1$ is an $i$--vanishing path for $\sigma$ with
  $i \leq n$ as $\alpha$ is embedded and $[\sigma^n(\beta)] = \alpha$.
  Thus $[\sigma^n(\beta_0)] = \alpha$.  Similarly, decompose $\beta_0
  = \beta_2 \cdot \beta_3$ where $\beta_2$ is an $i'$--vanishing path
  for $\sigma$ with $i' \leq n$ and where $[\sigma^n(\beta_2)] = x$.
  Thus $[\sigma^n(\beta_3)] = \alpha$ and $(\sigma^n)\inv(x)$ and
  $(\sigma^n)\inv(y)$ only intersect $\beta_3$ in its endpoints.  Now
  repeat at the vertices contained in $\alpha$ to decompose $\beta_3$
  as a union of vanishing paths (the number of which is bounded by the
  number of vertices of $\Gamma$) connected by segments that are
  homeomorphically mapped to the edges of $\alpha$.  The length and
  number of such segments is bounded.  Hence by Proposition
  \ref{prop:lb} and since $\tau$ induces a Lipschitz map between the
  universal covers of $\Gamma$ and $\Gamma'$ there is a constant $K_2$
  such that $length([\tau(\beta)]) \leq K_2\mu^n$.  Setting $K = K_1 +
  K_2$ completes the proof.
\end{proof}

To show the correct lower bound on the length of an $i$--vanishing
path for $\sigma\co \Gamma \to \Gamma$ we need the following lemma
from \cite{ar:BFH97}.

\begin{lemma}[Lemma 2.9 \cite{ar:BFH97}]\label{lm:BFH-2.9}
  Let $\sigma\co \Gamma \to \Gamma$ be a train-track map representing
  a fully irreducible outer automorphism $\phi$.  Then for every $C>0$
  there is a number $M>0$ such that if $\gamma$ is any path, then one
  of the following holds:
  \begin{itemize}
  \item[1.] $[\sigma^M(\gamma)]$ contains a legal segment of length
    $>C$.
    
  \item[2.] $[\sigma^M(\gamma)]$ has fewer illegal turns than
    $\gamma$.
    
  \item[3.] $\gamma$ is a concatenation $x \cdot y \cdot z$ with
    $[\sigma^M(y)]$ (periodic) Nielsen and $x$ and $z$ have length
    $\leq 2C$ and at most one illegal turn.
  \end{itemize}
\end{lemma}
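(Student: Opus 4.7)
The plan is to decompose $\gamma$ into a concatenation of maximal legal subpaths separated by illegal turns and track how this decomposition evolves under iteration of $\sigma$. Write $\gamma = L_0 \cdot L_1 \cdots L_k$, where each $L_i$ is legal and the juncture between $L_{i-1}$ and $L_i$ is an illegal turn; here $k$ is the number of illegal turns in $\gamma$. Under one application of $\sigma$ followed by tightening, each $L_i$ maps to a legal segment of length $\lambda\cdot length(L_i)$, while at each surviving illegal turn at most $BCC(\sigma)$ is cancelled from each adjacent image segment.

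The mechanism driving everything is the dichotomy imposed by the critical constant $c = \frac{2BCC(\sigma)}{\lambda-1}$: a legal segment of length $>c$ has image under $\sigma^M$ whose length grows like $\lambda^M$ regardless of cancellation at its two ends, whereas a shorter legal segment may be entirely devoured by cancellation, merging its two flanking illegal turns (possibly cancelling an illegal turn altogether). Given $C>0$, I would first pick $M$ so that any legal segment of length at least $c$ has $\sigma^M$-image of length exceeding $C$. If some \emph{interior} $L_i$ (with $0<i<k$) has length $>c$, then the corresponding image segment in $[\sigma^M(\gamma)]$ exceeds $C$, which is Case 1. If instead some illegal turn of $\gamma$ fails to survive in $[\sigma^M(\gamma)]$, then the number of illegal turns strictly decreases, giving Case 2.

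The substantive case is when neither happens: every interior $L_i$ has length $\leq c$, every illegal turn persists in $[\sigma^M(\gamma)]$, and every legal segment of $[\sigma^M(\gamma)]$ has length $\leq C$. I would then isolate the decomposition $\gamma = x\cdot y\cdot z$ by letting $x$ consist of $L_0$ together with at most the first illegal turn and a sufficiently short initial segment of $L_1$ (symmetrically for $z$), engineered so that $length(x),length(z)\leq 2C$ and each carries at most one illegal turn. The middle $y$ is then a tight path whose image $[\sigma^M(y)]$ has all legal segments of length $\leq C$ and a definite (bounded) number of illegal turns. The set of such tight paths in $\Gamma$ is finite, since $\Gamma$ has finitely many edges and the illegal turns are drawn from a finite set. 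The train-track map $\sigma$ sends this finite family into itself (after possibly re-enlarging $M$ to absorb end-effects using Cases 1 and 2 again), so the orbit of $[\sigma^M(y)]$ under $\sigma$ is eventually periodic; replacing $M$ by a multiple, $[\sigma^M(y)]$ is fixed by some positive power of $\sigma$, i.e.~is a periodic Nielsen path, as required by Case 3.

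The main obstacle is the choice of cut-points for the decomposition $\gamma = x\cdot y\cdot z$: one must verify that the combinatorial bookkeeping at the ends of $\gamma$ (where a long $L_0$ or $L_k$ is permitted by the conclusion, but where cancellation can still happen on the outer side) is compatible with the finiteness argument used to locate the periodic Nielsen $[\sigma^M(y)]$ in the middle. This is precisely what forces the end pieces $x,z$ to be allowed up to length $2C$ and one illegal turn, rather than being required to be shorter or legal. Once the bookkeeping is set up carefully, the argument reduces to the finiteness of tight paths of bounded length with a bounded number of illegal turns, together with bounded cancellation and the train-track expansion inequality.
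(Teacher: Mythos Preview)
The paper does not prove this lemma; it is quoted verbatim from \cite{ar:BFH97} and used as a black box. So there is no ``paper's own proof'' to compare against, and your task is really to reconstruct the original Bestvina--Feighn--Handel argument.

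Your setup (the decomposition $\gamma = L_0 \cdots L_k$ into maximal legal pieces, the role of the critical constant, and the reduction to the situation where no illegal turn dies and no interior legal piece is long) is correct and is indeed how the argument begins. The gap is in your treatment of Case~3. You assert that the middle piece $y$ has ``a definite (bounded) number of illegal turns'' and that ``the set of such tight paths in $\Gamma$ is finite.'' Neither claim is justified. The number of illegal turns in $y$ is essentially $k-2$, which is not bounded independently of $\gamma$; and even for paths with a fixed number of illegal turns and legal pieces of length $\leq C$, the set of such tight paths is a continuum (the endpoints and the lengths of the legal pieces vary continuously), not a finite set. Consequently your orbit-finiteness argument does not produce a uniform $M$: at best it gives, for each individual $\gamma$, an $M_\gamma$ after which $[\sigma^{M_\gamma}(y)]$ is periodic, and there is no reason these $M_\gamma$ should be bounded.

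The actual argument works locally at each illegal turn rather than globally on $y$. One shows that there are only finitely many illegal turns in $\Gamma$, and for each one the germ of cancellation is determined; an illegal turn that survives all iterates must sit inside an indivisible (periodic) Nielsen path. This gives a uniform $M$ after which every surviving illegal turn has a neighborhood that coincides with a piece of some iNp, and adjacent such pieces concatenate into a Nielsen path. That local-to-global step is what replaces your finiteness claim and is where the uniformity in $M$ comes from.
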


Using the above we can show that when $\Gamma$ does not contain an
orbit of periodic Nielsen paths then short vanishing paths for $\sigma$
vanish quickly.  First we need to understand paths in $\Gamma$ that
could satisfy conclusion 3 in Lemma \ref{lm:BFH-2.9}.

\begin{lemma}\label{lm:2illegal}
  Let $\sigma\co\Gamma \to \Gamma$ be as in Convention \ref{con:ng}
  and suppose that $\Gamma$ does not contain an indivisible orbit of
  periodic Nielsen paths.  Then there exists an $N > 0$ such that if
  $\gamma$ is an $i$--vanishing path for $\sigma \co \Gamma \to
  \Gamma$ with at most 2 illegal turns, then $i \leq N$.
\end{lemma}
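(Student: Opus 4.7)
The plan is to iterate Lemma \ref{lm:BFH-2.9} on $\gamma$ and use the hypothesis to rule out the Nielsen alternative. Fix a constant $C$ strictly greater than the critical constant $\frac{2BCC(\sigma)}{\lambda-1}$ for $\sigma$ and let $M$ be the corresponding number produced by Lemma \ref{lm:BFH-2.9}. Recall that $\sigma$ cannot increase the number of illegal turns in a path, so this count is non-increasing along the sequence of iterates of $\gamma$.

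Applying Lemma \ref{lm:BFH-2.9} to $\gamma$ yields one of three outcomes. In case (1), $[\sigma^M(\gamma)]$ contains a legal segment of length exceeding $C$; since $C$ is above the critical constant, the length of $[\sigma^{M+n}(\gamma)]$ grows to infinity as $n \to \infty$, contradicting $\gamma$ being $i$-vanishing for $i > M$, so $i \leq M$. In case (2), $[\sigma^M(\gamma)]$ has strictly fewer illegal turns, hence at most one, so applying Lemma \ref{lm:BFH-2.9} again to $[\sigma^M(\gamma)]$ either lands us back in case (1) (giving $i \leq 2M$) or produces a legal iterate $[\sigma^{2M}(\gamma)]$. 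A nontrivial legal path has its length multiplied by $\lambda$ under each application of $\sigma$ and can never become trivial, so $[\sigma^{2M}(\gamma)]$ must already be trivial, forcing $i \leq 2M$. In case (3), $\gamma = x \cdot y \cdot z$ with $[\sigma^M(y)]$ a periodic Nielsen path. The hypothesis that $\Gamma$ contains no indivisible orbit of periodic Nielsen paths implies that every periodic Nielsen path is trivial, so $[\sigma^M(y)]$ is trivial and $y$ is itself $M$-vanishing; Proposition \ref{prop:lb} then gives $length(y) \leq K\mu^M$, while $length(x), length(z) \leq 2C$ by construction. Hence $\gamma$ has length bounded by $4C + K\mu^M$, independent of $i$.

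The main obstacle is to conclude from case (3) (including the instance arising inside case (2)) where we have a $j$-vanishing path of uniformly bounded length with at most two illegal turns but no direct bound on $j$. The plan is to continue iterating Lemma \ref{lm:BFH-2.9} and observe that each successive case (3) invocation produces $[\sigma^M(\gamma)] = [\sigma^M(x) \cdot \sigma^M(z)]$ of length at most $4C\lambda^M$, since the middle piece vanishes. Thus the iterates $[\sigma^{kM}(\gamma)]$ remain in a compact family $\mathcal{F}$ of bounded-length paths in $\Gamma$ with at most two illegal turns, parametrized by finitely many combinatorial edge sequences together with the continuous lengths of the intervening legal subsegments. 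Either the iterates eventually fall into case (1) or case (2), supplying the required bound, or they stay in $\mathcal{F}$ indefinitely; in the latter scenario a compactness/diagonal argument extracts a subsequential limit $\gamma_\infty \in \mathcal{F}$ all of whose $\sigma$-iterates remain in $\mathcal{F}$. Standard train-track analysis of such length-bounded orbits then forces $\gamma_\infty$ to contain a nontrivial periodic Nielsen subpath, contradicting the hypothesis. Combining all cases yields a uniform bound $N$ depending only on $\sigma$.
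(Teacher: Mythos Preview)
Your reduction via Lemma~\ref{lm:BFH-2.9} is largely a detour: the fact that an $i$--vanishing path with at most two illegal turns has uniformly bounded length is immediate, since each of its (at most three) legal subsegments has length below the critical constant. So after your case analysis you arrive at exactly the situation the lemma is really about: a family of bounded-length vanishing paths with at most two illegal turns and vanishing times $i\to\infty$, and you must derive a contradiction.

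The gap is in your last paragraph. The assertion that ``standard train-track analysis of such length-bounded orbits forces $\gamma_\infty$ to contain a nontrivial periodic Nielsen subpath'' is precisely the content of the lemma, and it is not standard. A subsequential limit $\gamma_\infty$ of $i_j$--vanishing paths with $i_j\to\infty$ need not a priori be a Nielsen path: having a bounded $\sigma$--orbit does not by itself exclude that $\gamma_\infty$ vanishes in finite time, and even if $[\sigma^{i_\infty}(\gamma_\infty)]$ is a point, continuity only tells you that $[\sigma^{i_\infty}(\gamma_j)]$ is short, not that it vanishes in bounded further time. You also need to rule out that the limit is trivial. What is actually required is a structural argument specific to paths with one or two illegal turns. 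The paper does this directly: after passing to a subsequence of fixed combinatorial type (and a power of $\sigma$ fixing the relevant vertices so that $[\sigma(\gamma_j)]\subseteq\gamma_j$), it shows that in the one--illegal--turn case the $\gamma_j$ are \emph{nested} (since the two legal halves have equal length), so the limit $\gamma_\infty$ exists, is not a vanishing path, and satisfies $[\sigma(\gamma_\infty)]=\gamma_\infty$, hence is an indivisible Nielsen path. In the two--illegal--turn case the middle legal segment maps over itself, so has a fixed point splitting $\gamma$ into two pieces whose iterates never cancel against each other; each must then vanish separately, but one of them is a one--illegal--turn vanishing path whose legal halves have unequal lengths, a contradiction. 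This is the argument your compactness sketch is missing.
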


\begin{proof}
  Suppose otherwise, therefore we have sequence of $i_j$--vanishing
  paths $\gamma_j$ with at most 2 illegal turns 
  where $i_j < i_{j+1}$.  We will show that this implies that
  some power of $\sigma$ has a Nielsen path which contradicts our
  assumption. 

  First note that the lengths of $\gamma_j$ are uniformly bounded,
  since legal subpaths of vanishing paths have length bounded by the
  critical constant.  By passing to a subsequence we can assume that
  the paths $\gamma_j$ all have the same combinatorial type, i.e, they
  cross the same turns of $\Gamma$ in the same order.  Further, by
  passing to a power of $\sigma$, possibly replacing the sequence
  $\gamma_j$ by $\sigma^\ell(\gamma_j)$ for some $\ell$ such that the
  vertex (or vertices) at the illegal turn is (are) fixed by $\sigma$
  and again passing to a subsequence, we can assume that
  $[\sigma(\gamma_j)] \subseteq \gamma_j$.

  We will break the proof up into two cases depending on whether the
  paths $\gamma_j$ contain a single illegal turn or two illegal turns.
  First assume that the paths $\gamma_j$ only contain a single illegal
  turn.  Notice that any $i$--vanishing path with a single illegal
  turn has the form $a \cdot b$ where $a$ and $b$ are legal segments
  of the same length such that $\sigma^i(a) = \sigma^i(\bar{b})$.
  Further any subpath of the form $a' \cdot b'$ where $a'$ and $b'$
  are legal segments of the same length is an $i'$--vanishing path
  where $i' \leq i$.

  We have two claims about the vanishing paths $\gamma_j$.

  \medskip \noindent \textbf{Claim 1}: If $\iota$ is a subpath of
  $\gamma_j$ and $\iota$ is an $i$--vanishing path for $\sigma\co
  \Gamma \to \Gamma$ then $i \leq i_j$.

  \begin{proof}[Proof of Claim]
    If $[\sigma^{i_j}(\iota)]$ is not a point, then it is a path that
    does not contain any illegal turns (as there is a single illegal
    turn in $\gamma_j$) and hence $\iota$ is not a vanishing path.
  \end{proof}

  \medskip \noindent \textbf{Claim 2}: If $j < k$ then $\gamma_j$ is a
  subpath of $\gamma_k$.

  \begin{proof}[Proof of Claim]
    Since the two legal segments in each of $\gamma_j$ and $\gamma_k$
    have equal lengths, we must have $\gamma_j \subset \gamma_k$ or
    $\gamma_k \subset \gamma_j$ because they have the same
    combinatorial type. The latter is not possible by Claim 1.
  \end{proof}
  
  Therefore by the above Claims, we have $\gamma_j \subset
  \gamma_{j+1}$ and hence there is a well-defined limit
  $\gamma_\infty$.  By construction, we have $\gamma_\infty = a \cdot
  b$ where $a$ and $b$ are legal segments of the same length as this
  holds for each of the $\gamma_j$.  Further $[\sigma(a)]$ and
  $[\sigma(b)]$ are also legal segments of the same length and
  $[\sigma(\gamma_\infty)] \subseteq [\gamma_\infty]$.  If
  $[\sigma(\gamma_\infty)] \neq \gamma_\infty$ then
  $[\sigma(\gamma_\infty)] \subset \gamma_{j'}$ for some $j'$ and as
  $[\sigma(a)]$ and $[\sigma(b)]$ are also legal segments of the same
  length, $[\sigma(\gamma_\infty)]$ is a $i'$--vanishing path for some
  $i' \leq i$.  However, since $\gamma_j \subset \gamma_\infty$ are
  all $i_j$--vanishing paths with $i_j \to \infty$, by Claim 1, the
  path $\gamma_\infty$ is not a vanishing path.  Therefore
  $[\sigma(\gamma_\infty)] = \gamma_\infty$ and hence $\gamma_\infty$
  is a Nielsen path.

  It remains to consider the case when $\gamma_j$'s have two illegal
  turns. We will argue that this is not possible. Suppose $\gamma$ is
  a vanishing path with two illegal turns and
  $[\sigma(\gamma)]\subset\gamma$. The middle legal segment $b$ of
  $\gamma=a\cdot b\cdot c$ maps over itself and therefore has a unique
  fixed point, breaking it up as $b=b_1b_2$. There are two subcases
  depending on whether $b$ maps over itself preserving or reversing
  the orientation; the cases are similar and we assume the orientation
  is preserved. Therefore the iterates of $a \cdot b_1$ and of
  $b_2\cdot c$ never cancel against each other, so both must be
  vanishing. But $[\sigma(a\cdot b_1)]$ has the form $a'\cdot b_1$ for
  some $a'\subset a$ (and both $ab_1$ and $a'b_1$ are vanishing). This
  contradicts the fact that vanishing paths with one illegal turn have
  legal segments of equal lengths.

  This completes the proof of the lemma.
\end{proof}

\begin{lemma}\label{lm:ng-vp}
  Let $\sigma\co\Gamma \to \Gamma$ be as in Convention \ref{con:ng}
  and suppose that $\Gamma$ does not contain an indivisible orbit of
  periodic Nielsen paths.  For every $L > 0$ there exists an $N > 0$,
  such that if $\gamma$ is an $i$--vanishing path for $\sigma$ and
  $length(\gamma) \leq L$ then $i \leq N$.
\end{lemma}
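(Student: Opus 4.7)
My plan is to induct on the number of illegal turns, using Lemma \ref{lm:BFH-2.9} to carry out each inductive step and Lemma \ref{lm:2illegal} as the base case. The first observation is that because every legal subsegment of a vanishing path has length at most the critical constant $CC = \frac{2\,BCC(\sigma)}{\lambda-1}$, any $i$--vanishing path of length at most $L$ has at most $k_0 := \lceil L/m\rceil$ illegal turns (where $m$ is the minimum edge length in $\Gamma$). Therefore it suffices to prove, for each $k\ge 0$, the existence of a constant $N_k$ such that every $i$--vanishing path with at most $k$ illegal turns has $i\le N_k$.

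Fix $C$ larger than the critical constant and let $M$ be the constant supplied by Lemma \ref{lm:BFH-2.9}. The case $k\le 2$ is Lemma \ref{lm:2illegal}. For $k\ge 3$, let $\gamma$ be an $i$--vanishing path with at most $k$ illegal turns and apply Lemma \ref{lm:BFH-2.9}. Case 1 is impossible because $[\sigma^M(\gamma)]$ is itself vanishing, so its legal segments have length at most $CC<C$. In Case 2, $[\sigma^M(\gamma)]$ has strictly fewer illegal turns and is $(i-M)$--vanishing, so the inductive hypothesis yields $i\le M+N_{k-1}$. In Case 3, $\gamma=x\cdot y\cdot z$ with $[\sigma^M(y)]$ (periodic) Nielsen; by the hypothesis that $\Gamma$ has no indivisible orbit of periodic Nielsen paths, $[\sigma^M(y)]$ must be trivial, so $y$ is $M$--vanishing. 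Then $[\sigma^M(\gamma)]=[\sigma^M(x)\cdot\sigma^M(z)]$ has at most three illegal turns (at most one from each of $[\sigma^M(x)]$ and $[\sigma^M(z)]$, plus one at the new junction created by the collapse of $\sigma^M(y)$), so for $k\ge 4$ the inductive hypothesis again applies to give $i\le M+N_3$.

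The delicate case is $k=3$ in Case 3, where Lemma \ref{lm:BFH-2.9} does not strictly reduce the number of illegal turns. Here I would extend the limit-and-Nielsen-path strategy from the proof of Lemma \ref{lm:2illegal}: assume for contradiction a sequence $\gamma_j$ of $i_j$--vanishing paths, each with at most three illegal turns (hence of uniformly bounded length at most $4\,CC$) and with $i_j\to\infty$. Finitely many combinatorial types of such paths exist, so after passing to a subsequence all $\gamma_j$ share a common type and converge, in the compact parameter space of that type, to a limit $\gamma_\infty$. A diagonal extraction then arranges that for every $n\ge 0$ the tightened iterates $[\sigma^n(\gamma_j)]$ share a stable combinatorial type and converge to a path $\delta^n$ with $\delta^{n+1}=[\sigma(\delta^n)]$; all the $\delta^n$ have length at most $4\,CC$. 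Since only finitely many types appear in the orbit, there exist $n_0$ and $r>0$ such that $\delta^{n_0}$ and $\delta^{n_0+r}$ share a combinatorial type $\tau$. On the compact convex parameter space of paths of type $\tau$, the map $\Phi(\alpha)=[\sigma^r(\alpha)]$ is affine in the endpoint coordinates (because $\sigma$ acts linearly on each edge); Brouwer's theorem yields a fixed point $\alpha^*$, which is then a non-trivial periodic Nielsen path and contains an indivisible periodic Nielsen subpath, contradicting the hypothesis.

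The main obstacle is exactly the case $k=3$: the induction-plus-Lemma \ref{lm:BFH-2.9} machinery does not reduce complexity, and one must mount an independent limit argument. The two technical points requiring care are (i) verifying that $\Phi$ can be arranged to be a self-map of a compact convex subset of the type--$\tau$ parameter space, exploiting the affine action of $\sigma^r$ to avoid the combinatorial type being destroyed under perturbation, and (ii) ensuring that the fixed point produced has positive length, so that it gives a genuine non-trivial periodic Nielsen path. Both are handled by choosing $n_0$ large enough that $\delta^{n_0}$ lies in the interior of its parameter space and by using the affine structure of $\sigma^r$ on edges.
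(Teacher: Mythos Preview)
Your overall strategy---bound the number of illegal turns in terms of $L$, then iterate Lemma~\ref{lm:BFH-2.9} to reduce that number until Lemma~\ref{lm:2illegal} applies---is exactly the paper's. The point of departure is Case~3. The paper reads Case~3 (under the no--Nielsen--path hypothesis) as forcing $y$ itself to be trivial, so that $\gamma=x\cdot z$ already has at most two illegal turns and Lemma~\ref{lm:2illegal} applies directly; no separate $k=3$ analysis is needed. You instead deduce only that $[\sigma^M(y)]$ is trivial, hence that $[\sigma^M(\gamma)]$ has at most three illegal turns, and are then forced into an independent $k=3$ argument.

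That Brouwer argument is where your proposal has a genuine gap. The map $\Phi(\alpha)=[\sigma^r(\alpha)]$ need not carry the parameter space of a fixed combinatorial type $\tau$ into itself: knowing that the single limit path $\delta^{n_0}$ returns to type $\tau$ after $r$ steps says nothing about nearby paths, because tightening is only piecewise-affine in the endpoint coordinates and the combinatorial type changes discontinuously across boundary strata. Placing $\delta^{n_0}$ in the interior does not help, since $\Phi$ can still push interior points out of type $\tau$. Even granting a continuous self-map and a fixed point, you have not excluded the trivial fixed point (a single vertex), which is always fixed by $\alpha\mapsto[\sigma^r(\alpha)]$. The paper's proof of Lemma~\ref{lm:2illegal} avoids all of this by exploiting the rigid structure specific to one or two illegal turns (equal-length legal pieces, forced nesting of the $\gamma_j$, the middle fixed point for two turns); it is that kind of hands-on structural analysis, not a general fixed-point principle, that would be needed if a three-illegal-turn lemma were genuinely required.
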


\begin{proof} 
  As there is a lower bound on the distance between any two illegal
  turns in $\Gamma$, for any vanishing path with length less than $L$,
  the number of illegal turns in a path of length at most $L$ is at
  most some constant $I$.

  Let $C$ be larger than the critical constant for $\sigma$ and $M$
  the constant from Lemma~\ref{lm:BFH-2.9}.  If $[\sigma^M(\gamma)]$
  had a legal segment of length greater than $C$, this would imply
  that $[\sigma^n(\gamma)]$ had positive length for all $n > M$, and
  thus $\gamma$ is not a vanishing path, contrary to hypothesis. Thus
  possibility 1 in Lemma \ref{lm:BFH-2.9} cannot occur for any
  vanishing path. On the other hand, if possibility 3 of
  Lemma \ref{lm:BFH-2.9} occurs then $y$ is trivial since by
  assumption $\Gamma$ does not contain an indivisible orbit of
  periodic Nielsen paths, hence $\gamma = x \cdot z$ and has length
  less than $2C$ and at most 2 illegal turns.

  Thus the possibility 2 of Lemma~\ref{lm:BFH-2.9} must occur for
  every vanishing path with more than 2 illegal turns or length
  greater than $2C$, namely $[\sigma^M(\gamma)]$ has strictly fewer
  illegal turns than $\gamma$.  Hence $[\sigma^{IM}(\gamma)]$ has at
  most two illegal turns and length less than $2C$ for any vanishing
  path $\gamma$.  Therefore, we see that $[\sigma^{N_0+IM}(\gamma)]$
  is a point, where $N$ is the constant from Lemma \ref{lm:2illegal}
  and hence $i \leq N + IM$.
\end{proof}

To derive the more precise lower bound on the length of an
$i$--vanishing path, we consider two types of \emph{legality}.  For a
path $\gamma$ with $I$ illegal turns, we define the ratio:
\[LEG_1(\gamma) = \frac{length(\gamma)-2\lambda\inv
  BI}{length(\gamma)}\] where $B = BCC(\sigma)$ is the bounded
cancellation constant.  Since $length([\sigma(\gamma)]) \geq \lambda
length(\gamma) - 2BI$ we see that $length([\sigma(\gamma)]) \geq
\lambda LEG_1(\gamma) length(\gamma)$.  The following Lemma and
Corollary do not depend on the absence of an orbit of periodic Nielsen
paths.

\begin{lemma}\label{lm:legality-converge}
  Let $\sigma\co\Gamma \to \Gamma$ be as in Convention \ref{con:ng}.
  For any path $\gamma \subset \Gamma$ with $LEG_1(\gamma) >
  \lambda\inv$, the ratio $LEG_1([\sigma^n(\gamma)])$ converges to
  $1$.  Moreover, for any $0 < \epsilon < 1 - \lambda^{-1}$, there is
  a $\delta > 0$ such that if $ LEG_1(\gamma) \geq \lambda\inv +
  \epsilon$ then the infinite product $\prod_{n=0}^\infty
  LEG_1([\sigma^n(\gamma)])$ converges to a positive number greater
  than or equal to $\delta$.
\end{lemma}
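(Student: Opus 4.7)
My plan is to reduce the statement to a one-dimensional dynamical system governing the sequence $x_n := LEG_1([\sigma^n(\gamma)])$. Writing $L_n = length([\sigma^n(\gamma)])$ and $I_n$ for the number of illegal turns of $[\sigma^n(\gamma)]$, the train-track property forces $I_{n+1}\leq I_n$, since legal turns go to legal turns and tightening cannot create illegal turns. Together with bounded cancellation this yields the standard bound $L_{n+1}\geq \lambda L_n - 2BI_n$. The key observation is that the definition of $x_n$ can be rearranged as $2BI_n = \lambda L_n(1-x_n)$, so this bound becomes $L_{n+1}\geq \lambda x_n L_n$. Substituting back into the definition of $x_{n+1}$ and using $I_{n+1}\leq I_n$ once more gives the recursion
\[
x_{n+1}\;\geq\; 1 - \frac{\lambda\inv(1-x_n)}{x_n} \;=:\; f(x_n).
\]

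The heart of the argument is a short analysis of $f$ on $(0,1]$. I would check that $f$ is increasing there, and that $f(x)-x = -(x-\lambda\inv)(x-1)/x$, whose only zeros in $(0,1]$ are $\lambda\inv$ and $1$ and which is strictly positive on the open interval $(\lambda\inv,1)$. It then follows immediately that if $x_0 > \lambda\inv$ the sequence $x_n$ is strictly increasing and bounded above by $1$, so it converges to a fixed point of $f$ in $(\lambda\inv,1]$, which can only be $1$. This is the first claim.

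For the moreover part, I would pass to the error $e_n := 1 - x_n$ and compute $1 - f(1-e) = \lambda\inv e/(1-e)$, so that $e_{n+1}\leq \lambda\inv e_n/(1-e_n)$. By the first claim $e_n \to 0$, so once $n$ is large enough that $e_n < 1 - \lambda\inv$ the recursion becomes a genuine geometric contraction and $\sum e_n < \infty$; this in turn forces the infinite product $\prod (1-e_n) = \prod x_n$ to converge to a strictly positive number. For uniformity, monotonicity of $f$ gives $x_n \geq f^n(\lambda\inv+\epsilon)$ whenever $x_0 \geq \lambda\inv+\epsilon$, so
\[
\prod_{n=0}^{\infty} x_n \;\geq\; \prod_{n=0}^{\infty} f^n(\lambda\inv+\epsilon) \;=:\; \delta(\epsilon) \;>\; 0,
\]
as required.

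The main obstacle will be sequencing the asymptotic estimates correctly: I must first use the first claim to drive $e_n$ below a threshold at which $\lambda\inv/(1-e_n) < 1$, and only then apply the geometric-decay bound to conclude summability. This is routine bookkeeping but has to be carried out uniformly in the initial condition; reducing to the extremal starting value $x_0 = \lambda\inv+\epsilon$ via monotonicity of $f$ takes care of this and produces the uniform lower bound $\delta$.
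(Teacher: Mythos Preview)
Your proposal is correct and follows essentially the same route as the paper: both derive the key inequality $1 - x_{n+1} \leq (1-x_n)/(\lambda x_n)$ from $L_{n+1}\geq \lambda L_n - 2BI_n$ together with $I_{n+1}\leq I_n$, then iterate to obtain convergence to $1$ and summability of $1-x_n$. Your packaging via the one-dimensional map $f(x)=1-\lambda^{-1}(1-x)/x$ and the monotonicity argument $x_n\geq f^n(\lambda^{-1}+\epsilon)$ is a slightly cleaner way to extract the uniform lower bound $\delta(\epsilon)$ than the paper's appeal to a $\limsup$ ratio, but the underlying computation is identical.
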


\begin{proof}
  Since the number of illegal turns in $[\sigma(\gamma)]$ is at most
  the number of illegal turns in $\gamma$ and
  $length([\sigma(\gamma)]) \geq \lambda length(\gamma) - 2BI$ we see:
  \begin{align*}
    \frac{1 - LEG_1([\sigma(\gamma)])}{1 - LEG_1(\gamma)} &\leq
    \frac{2\lambda\inv BI}{\lambda length(\gamma) - 2
      BI}\left(\frac{2\lambda\inv BI}
      {length(\gamma)}\right)\inv \\
    &= \frac{length(\gamma)}{\lambda length(\gamma) - 2BI} =
    \frac{1}{\lambda LEG_1(\gamma)} < 1.
  \end{align*}
  Hence $LEG_1([\sigma(\gamma)]) > LEG_1(\gamma) > \lambda\inv$ and so
  repeating the above we see that $LEG_1([\sigma^n(\gamma)])$
  converges to $1$.  By bounding $LEG_1(\gamma)$ away from
  $\lambda\inv$ we can make the rate of convergence independent of the
  path $\gamma$.  Further the above calculation shows:
  $$\limsup_{n \to \infty} \frac{1 - 
    LEG_1([\sigma^{n+1}(\gamma)])}{1-LEG_1([\sigma^n(\gamma)])} \leq
  \lambda\inv.$$ Hence the product $\prod_{n=1}^\infty
  LEG_1([\sigma^n(\gamma)])$ converges to a positive number, which can
  be bounded away from 0 independent of the path $\gamma$ by bounding
  $LEG_1(\gamma)$ away from $\lambda\inv$.
\end{proof}

The following corollary follows immediately.

\begin{corollary}\label{co:legality-lb}
  Let $\sigma\co \Gamma \to \Gamma$ be as in Convention \ref{con:ng}.
  For any $0 < \epsilon < 1 - \lambda^{-1}$, there is a constant $K >
  0$ such that for any $n \geq 0$ and any path $\gamma \subset \Gamma$
  with $LEG_1(\gamma) \geq \lambda\inv +
  \epsilon$: $$length([\sigma^n(\gamma)]) \geq K\lambda^n
  length(\gamma).$$
\end{corollary}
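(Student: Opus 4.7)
The plan is to iterate the basic legality-inequality along the orbit of $\gamma$ under $\sigma$ and then apply Lemma \ref{lm:legality-converge} to bound the resulting product of legalities from below.

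First I would recall the core inequality established just before Lemma \ref{lm:legality-converge}: for any path $\alpha$, the bounded cancellation estimate gives
\[ length([\sigma(\alpha)]) \;\geq\; \lambda\, LEG_1(\alpha)\, length(\alpha). \]
Applying this with $\alpha = [\sigma^i(\gamma)]$ and composing for $i = 0, 1, \ldots, n-1$ yields
\[ length([\sigma^n(\gamma)]) \;\geq\; \lambda^n \Bigl(\prod_{i=0}^{n-1} LEG_1([\sigma^i(\gamma)])\Bigr) length(\gamma). \]

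Next I would invoke Lemma \ref{lm:legality-converge}. Since $LEG_1(\gamma) \geq \lambda\inv + \epsilon$, that lemma provides a constant $\delta > 0$ (depending only on $\epsilon$, not on $\gamma$) such that the full infinite product $\prod_{i=0}^{\infty} LEG_1([\sigma^i(\gamma)])$ converges to a positive number at least $\delta$. Because all factors of this product lie in $(0,1]$ (each legality is at most $1$), every finite partial product is bounded below by the full infinite product, so
\[ \prod_{i=0}^{n-1} LEG_1([\sigma^i(\gamma)]) \;\geq\; \delta \]
for every $n \geq 0$. Setting $K = \delta$ in the displayed inequality finishes the argument.

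There is no real obstacle here; the statement is a bookkeeping consequence of Lemma \ref{lm:legality-converge}. The only subtlety worth noting explicitly is the uniformity: the hypothesis $LEG_1(\gamma) \geq \lambda\inv + \epsilon$ is preserved (in fact strictly improved) along the forward orbit by the computation inside Lemma \ref{lm:legality-converge}, so the constant $\delta$ from that lemma indeed bounds every partial product simultaneously, and $K$ depends only on $\epsilon$ and not on $\gamma$ or $n$.
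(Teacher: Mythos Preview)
Your proposal is correct and is exactly the argument the paper intends: the paper records no explicit proof, stating only that the corollary follows immediately from Lemma~\ref{lm:legality-converge}, and your iteration of the basic inequality $length([\sigma(\alpha)])\geq \lambda\,LEG_1(\alpha)\,length(\alpha)$ together with the uniform lower bound $\delta$ on the infinite product is precisely that immediate deduction.
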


As a word of caution, we will be applying this corollary to paths in
$\Gamma'$ and the train-track map $\sigma'\co \Gamma' \to \Gamma'$.
Specifically, this corollary enables us to get a lower bound on the
length of a path $\gamma \subset \Gamma$ where $[\tau(\gamma)] \subset
\Gamma'$ has a large legality with respect to $\sigma'\co\Gamma' \to
\Gamma'$.

\begin{lemma}\label{lawabiding implies growth}   
  Let $\sigma\co\Gamma \to \Gamma$ be as in Convention \ref{con:ng}.
  For any $0 < \epsilon < 1 - \lambdap^{-1}$, there exist constants $K
  > 0$ and $C,C' \geq 0$ such that for any $n \geq 0$ and any path
  $\gamma \subset \Gamma$, if $LEG_1([\tau\sigma^n(\gamma)]) \geq
  \lambdap^{-1} + \epsilon$ then: $$length(\gamma) \geq K \lambdap^n
  (length([\tau\sigma^n(\gamma)]) - C) - C'.$$
\end{lemma}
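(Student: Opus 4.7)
The plan is to leverage Corollary \ref{co:legality-lb}, but applied to the train-track map $\sigma'\co \Gamma' \to \Gamma'$ acting on the path $\delta = [\tau\sigma^n(\gamma)] \subset \Gamma'$. The hypothesis $LEG_1(\delta) \geq \lambdap^{-1} + \epsilon$ is precisely the input the corollary needs (after replacing $\lambda$ with $\lambdap$ in its statement), and it produces a constant $K_0 > 0$ depending only on $\epsilon$ such that
\[ length([\sigma'^n(\delta)]) = length([\sigma'^n\tau\sigma^n(\gamma)]) \geq K_0 \lambdap^n\, length([\tau\sigma^n(\gamma)]). \]

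Next I would invoke Lemma \ref{lm:Kn}, which compares $length([\tau(\gamma)])$ with $length([\sigma'^n\tau\sigma^n(\gamma)])$ since $\sigma'^n\tau\sigma^n$ is homotopic to $\tau$: there is a constant $K_1$ (independent of $n$ and $\gamma$) with
\[ length([\tau(\gamma)]) \geq length([\sigma'^n\tau\sigma^n(\gamma)]) - K_1 \lambdap^n. \]
Chaining the two inequalities gives $length([\tau(\gamma)]) \geq K_0 \lambdap^n\, length([\tau\sigma^n(\gamma)]) - K_1 \lambdap^n$.

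Finally, $\tau\co \Gamma \to \Gamma'$ is a Lipschitz homotopy equivalence, so it lifts to a map between universal covers which is Lipschitz with some constant $L$, giving $length([\tau(\gamma)]) \leq L\, length(\gamma)$. Dividing through, we obtain
\[ length(\gamma) \geq \frac{K_0}{L}\lambdap^n\, length([\tau\sigma^n(\gamma)]) - \frac{K_1}{L}\lambdap^n, \]
and setting $K = K_0/L$, $C = K_1/K_0$, and $C' = 0$ (or absorbing constants as desired) yields the stated inequality $length(\gamma) \geq K\lambdap^n(length([\tau\sigma^n(\gamma)]) - C) - C'$.

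I do not anticipate a real obstacle: all the nontrivial work has already been done in Corollary \ref{co:legality-lb} (which provides the exponential growth of legal-heavy paths under $\sigma'$) and in Lemma \ref{lm:Kn} (which bridges the tightening of $\tau$ and of $\sigma'^n\tau\sigma^n$). The only minor subtlety is to be careful that Corollary \ref{co:legality-lb} is applied to $\sigma'\co \Gamma' \to \Gamma'$ with expansion factor $\lambdap$ rather than to $\sigma\co \Gamma \to \Gamma$ — the statement of the corollary is symmetric between the two train-track maps in Convention \ref{con:ng}, so this substitution is legitimate.
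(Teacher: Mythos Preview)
Your proof is correct and follows essentially the same three-step approach as the paper: apply Corollary \ref{co:legality-lb} to $\sigma'$ on $[\tau\sigma^n(\gamma)]$, invoke Lemma \ref{lm:Kn}, and then pass from $length([\tau(\gamma)])$ back to $length(\gamma)$. The only cosmetic difference is that the paper phrases the last step via the quasi-isometry property of $\tau$ (yielding a nonzero $C'$), whereas your direct use of the Lipschitz bound $length([\tau(\gamma)]) \leq L\,length(\gamma)$ is slightly cleaner and even allows $C'=0$.
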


\begin{proof}    
  Applying Corollary \ref{co:legality-lb} to $\sigma'\co \Gamma' \to
  \Gamma'$ we see that there is a constant $K_1 > 0$ such that for any
  path $\gamma \subset \Gamma$ with $LEG_1([\tau\sigma^n(\gamma)])
  \geq \lambda^{-1} + \epsilon$:
  \[length([\sigma'^n\tau\sigma^n(\gamma)]) \geq K_1\lambdap^n
  length([\tau\sigma^n(\gamma)]).\] Combining this with Lemma
  \ref{lm:Kn} there is a constant $C \geq 0$ such that:
  \[ length([\tau(\gamma)]) \geq
  K_1\lambdap^n(length([\tau\sigma^n(\gamma)]) - C). \] Finally, since
  $\tau$ induces a quasi-isometry between the universal covers of
  $\Gamma$ and $\Gamma'$, there are constants $K > 0$ and $,C' \geq 0$
  such that \[length(\gamma) \geq
  K\lambdap^n(length([\tau\sigma^n(\gamma)]) - C) - C'\].
\end{proof}

We now show that if $\gamma$ is an $i$--vanishing path for $\sigma$
then $LEG_1(\tau(\gamma))$ can be made close to 1 for large enough
$i$, thus enabling us to use Lemma \ref{lawabiding implies growth}.
To show this we need to use the version of legality from
\cite{ar:BFH97}.  Let $C$ be larger than critical constant for
$\sigma$ and the critical constant for $\sigma'$ and
define: $$LEG_2(\gamma) = \frac{\mbox{\textit{sum of the lengths of
      the legal segments of $\gamma$ of length $\geq
      C$}}}{length(\gamma)}$$There is a constant $\eta$ such that,
$length([\sigma^n(\gamma)]) \geq
\eta\lambda^nLEG_2(\gamma)length(\gamma)$ for any path $\gamma$.  We
need the following lemma regarding this version of legality.

\begin{lemma}[Lemma 5.6 \cite{ar:BFH97}]\label{lm:BFH-5.6}
  Assume that $\phi$ has no nontrivial periodic conjugacy classes (or
  equivalently that $\phi$ is not represented by a homeomorphism of a
  punctured surface).  Then there are constants $L,N,\epsilon > 0$
  such that for any path $\gamma$ with $length(\gamma) \geq L$ and
  every $n \geq N$ either $LEG_2(\sigma^n(\gamma)) \geq \epsilon$ or
  $LEG_2(\sigma'^n\tau(\gamma)) \geq \epsilon$.
\end{lemma}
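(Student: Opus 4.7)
The plan is a proof by contradiction. Assume the conclusion fails: then there exist paths $\gamma_k \subset \Gamma$ with $length(\gamma_k) \to \infty$, integers $n_k \to \infty$, and $\epsilon_k \to 0$ such that
$LEG_2(\sigma^{n_k}(\gamma_k)) < \epsilon_k$ and $LEG_2(\sigma'^{n_k}\tau(\gamma_k)) < \epsilon_k$ for every $k$. The goal is to derive from this a surface realization of $\phi$, which will contradict the hypothesis that $\phi$ has no periodic conjugacy classes.

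First I would use Lemma \ref{lm:BFH-2.9} to pin down the structure of paths of very small $LEG_2$. Applied with any $C$ larger than the critical constant, possibility 1 in that lemma produces a long legal segment and so a uniform positive lower bound on $LEG_2$, contradicting $\epsilon_k \to 0$; possibility 2 can occur only finitely many times since the number of illegal turns is a nonnegative integer. Hence, after a bounded number of $\sigma^M$ iterates, only possibility 3 remains: the path decomposes as $x \cdot y \cdot z$ with $y$ a Nielsen subpath and $x,z$ short. Feeding this back into itself shows that $\sigma^{n_k}(\gamma_k)$ is, up to bounded end segments, a long concatenation of translates of \emph{one} indivisible Nielsen path $\rho$ for $\sigma$ (using that a stable train-track map has at most one orbit of such paths, of uniformly bounded length). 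Symmetrically, $\sigma'^{n_k}\tau(\gamma_k)$ is essentially a concatenation of translates of an indivisible Nielsen path $\rho'$ for $\sigma'$.

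Second, and this is the heart of the argument, I would extract simultaneous $\phi$-- and $\phi^{-1}$--invariant laminations from the two-sided Nielsen structure. The forward Nielsen behavior says that $\gamma_k$ shadows a leaf of the attracting lamination of $\phi$; the backward Nielsen behavior says that $\tau(\gamma_k)$ shadows a leaf of the attracting lamination of $\phi^{-1}$. Combined with the homotopy commutation $\sigma'^n \tau \sigma^n \simeq \tau$, these two pieces of data produce a pair of transverse measured foliations on a 2--complex associated to $\phi$. Using the rigidity of iwip dynamics developed in \cite{ar:BFH97}, the underlying 2--complex must collapse to a surface on which $\phi$ acts as the monodromy of a pseudo-Anosov; in particular, $\phi$ preserves the conjugacy classes of boundary curves of the surface, contradicting the hypothesis.

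The main obstacle is clearly the second step. The first step is essentially bookkeeping with Lemma \ref{lm:BFH-2.9} and the stable train-track structure. The surface extraction from simultaneous Nielsen behavior on both sides, however, is the substantive content of the lemma; at the level of Outer space, it is closely parallel to the statement that geometricity of both stable trees $T^+$ and $T^-$ forces $\phi$ to be realized on a surface (cf.\ Corollary \ref{co:HM} and the discussion following it), and it is precisely this rigidity that must be reproved inside $\Gamma$ and $\Gamma'$ here.
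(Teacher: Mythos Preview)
The paper does not prove this lemma; it is quoted verbatim as Lemma~5.6 of \cite{ar:BFH97}, with only the remark that the original is stated for conjugacy classes but applies to paths. So there is no argument in the present paper to compare your proposal against.

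That said, your sketch has the right architecture but is not a proof. Step~1 is essentially correct in outline, though the sentence ``possibility~2 can occur only finitely many times'' needs more care: the number of illegal turns in $\gamma_k$ grows with $length(\gamma_k)$, so you need a counting argument relating illegal turns to length to conclude that a path with small $LEG_2$ is, after bounded iteration, almost entirely covered by Nielsen subpaths. Step~2, which you yourself flag as the main obstacle, is not an argument but a statement of what needs to be shown. Saying that two-sided Nielsen behavior ``produces a pair of transverse measured foliations on a 2--complex'' and then invoking unspecified rigidity from \cite{ar:BFH97} is circular: the rigidity you would be invoking \emph{is} essentially Lemma~5.6 and its companions in that paper. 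The actual proof in \cite{ar:BFH97} works by showing that if both $\sigma$ and $\sigma'$ have Nielsen paths whose iterates cover arbitrarily long paths, then the endpoints of those Nielsen paths assemble into a finite $\phi$--invariant set of conjugacy classes (the ``boundary'' classes), directly contradicting the hypothesis that $\phi$ has no nontrivial periodic conjugacy classes. You should either reproduce that combinatorial argument or cite it; the lamination/surface language you use is a reinterpretation of the conclusion, not a route to it.
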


In \cite{ar:BFH97}, the above lemma is stated for conjugacy classes in
$F_k$ but the proof applies equally well to this setting.

\begin{lemma}\label{lm:vp-legality}
  Let $\sigma\co \Gamma \to \Gamma$ be as in Convention \ref{con:ng}
  and suppose that $\Gamma$ does not contain an indivisible orbit of
  periodic Nielsen paths.  For all $0 < \delta < 1$, there are
  constants $L$ and $N$ such that if $\gamma$ is a vanishing path for
  $\sigma$ with length $\geq L$ then $LEG_1([\sigma'^n\tau(\gamma)])
  \geq \delta$ for all $n \geq N$.
\end{lemma}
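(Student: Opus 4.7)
The plan is to exploit the BFH dichotomy (Lemma~\ref{lm:BFH-5.6}) at a fixed iterate $N_0$, use Lemma~\ref{lm:ng-vp} to force the dichotomy onto the $\Gamma'$-side for sufficiently long vanishing paths, and then propagate a positive $LEG_2$ bound forward through iteration by $\sigma'$ into a near-$1$ $LEG_1$ bound.

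Given $\delta \in (0,1)$, first apply Lemma~\ref{lm:BFH-5.6} to obtain constants $L_0$, $N_0$, and $\epsilon > 0$. Its hypothesis is satisfied because a nongeometric $T^+$ forces $\phi$ not to be realized by a surface homeomorphism (otherwise $T^\pm$ would both be dual to measured foliations on the surface, and hence geometric). Next, apply Lemma~\ref{lm:ng-vp} with this $N_0$ to obtain $L_1$ such that every $i$-vanishing path with $i \leq N_0$ has length at most $L_1$, and set $L = \max\{L_0, L_1+1\}$. Any vanishing path $\gamma$ with $length(\gamma) \geq L$ then fails to vanish by time $N_0$, so $[\sigma^{N_0}(\gamma)]$ is nondegenerate and the dichotomy applies.

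The $\Gamma$-side of the dichotomy can now be ruled out: $LEG_2([\sigma^{N_0}(\gamma)]) \geq \epsilon$ would produce a legal subsegment of length $\geq C$ in $[\sigma^{N_0}(\gamma)]$, and since $C$ exceeds the critical constant of $\sigma$ this subsegment persists and grows under further iteration of $\sigma$, contradicting the vanishing of $\gamma$. Hence $LEG_2([\sigma'^{N_0}\tau(\gamma)]) \geq \epsilon$. Write $\beta = [\sigma'^{N_0}\tau(\gamma)]$ and $\beta_k = [\sigma'^k(\beta)]$. The train-track property bounds the number $I_k$ of illegal turns of $\beta_k$ by $I_0 \leq length(\beta)/m'$, where $m'$ is the minimum edge length in $\Gamma'$. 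On the other hand, the definition of $\eta$ together with $LEG_2(\beta) \geq \epsilon$ gives $length(\beta_k) \geq \eta\epsilon\lambdap^k length(\beta)$. Consequently the ratio $I_k/length(\beta_k)$ decays like $\lambdap^{-k}$ \emph{uniformly in $\gamma$}, so the resulting lower bound on $LEG_1(\beta_k)$ tends to $1$ uniformly; choosing $k_0$ so that this bound exceeds $\delta$ and taking $N = N_0 + k_0$ completes the argument.

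The main technical point is the uniform cancellation of the $length(\beta)$ factors in the ratio $I_k/length(\beta_k)$: the illegal-turn count in $\beta$ is only \emph{linear} in $length(\beta)$ while $length(\beta_k)$ is $length(\beta)$ times an exponential in $k$, so the $\beta$-dependence washes out and the convergence rate depends only on $\sigma'$, $\epsilon$, and $\delta$. The only substantive external appeal is the deduction of surface-freeness from nongeometricity of $T^+$, needed to invoke Lemma~\ref{lm:BFH-5.6}.
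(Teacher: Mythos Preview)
Your argument is correct and follows essentially the same route as the paper's proof: invoke the BFH dichotomy (Lemma~\ref{lm:BFH-5.6}), rule out the $\Gamma$-side for vanishing paths, and then push a positive $LEG_2$ bound on $[\sigma'^{N_0}\tau(\gamma)]$ forward to a near-$1$ $LEG_1$ bound via exponential length growth against a linearly bounded illegal-turn count.

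Two small points. First, your appeal to Lemma~\ref{lm:ng-vp} is in the wrong direction: that lemma takes a length bound as input and produces a bound on $i$, whereas you want the converse (bounded $i$ $\Rightarrow$ bounded length), which follows immediately from Proposition~\ref{prop:lb}. Second, this detour is unnecessary anyway: the paper simply observes that $LEG_2([\sigma^n(\gamma)])=0$ for \emph{every} $n$, since any iterate of a vanishing path is still vanishing and therefore has no legal segment longer than the critical constant $C$; this rules out the $\Gamma$-side of the dichotomy directly without worrying about nondegeneracy of $[\sigma^{N_0}(\gamma)]$.
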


\begin{proof}
  Let $L_1$, $N_1$ and $\epsilon$ be constants from Lemma
  \ref{lm:BFH-5.6}.  Since $\gamma$ is a vanishing path for $\sigma$,
  we have $LEG_2(\sigma^n(\gamma)) = 0$ for all $n \geq 0$ and hence
  we must have the second conclusion from this Lemma, namely:
  $LEG_2([\sigma'^{N_1}\tau(\gamma)]) \geq \epsilon$ for vanishing
  paths $\gamma$ with length at least $L$.

  Therefore there is a constant $\eta>0$ such that for any vanishing
  path with length at least $L$: $$length([\sigma'^n\tau(\gamma)])
  \geq \eta\epsilon
  \lambdap^{n-N_1}length([\sigma'^{N_1}\tau(\gamma)])$$ for $n \geq
  N_1$.  Further, there is a constant $I$ such that the number of
  illegal turns in $\sigma'^n\tau(\gamma)$ is at most
  $Ilength([\sigma'^{N_1}\tau(\gamma)])$ for all $n \geq N_1$ (since
  legal turns go to legal turns and the distance between illegal turns
  is uniformly bounded from below).  Hence set $L = L_1$ and $N$ large
  enough such that $1 -
  \frac{2B\lambdap^{-1}I}{\eta\epsilon\lambdap^{N-N_1}} \geq \delta$.
\end{proof}

Finally, we can prove that the $i$--vanishing path have length
approximately $\lambdap^i$ when there are no orbits of periodic
Nielsen paths.

\begin{proposition}\label{prop:ngub}
  Let $\sigma\co \Gamma \to \Gamma$ be as in Convention \ref{con:ng}
  and suppose that $\Gamma$ does not contain an indivisible orbit of
  periodic Nielsen paths.  There exists a constants $K > 0$ and $C
  \geq 0$ such that for every $i$--vanishing path $\gamma$ for
  $\sigma\co\Gamma \to \Gamma$ we have $length(\gamma) \geq
  K\lambdap^i - C$.
 \end{proposition}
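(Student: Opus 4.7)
The plan combines Lemma~\ref{lm:ng-vp}, Lemma~\ref{lm:vp-legality}, and Lemma~\ref{lawabiding implies growth}, using the homotopy $\sigma'^N\tau\sigma^N\simeq\tau$ (which underlies Lemma~\ref{lm:Kn}) as a bridge between the two different legality conditions that arise.

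First I reduce to $i$ large: for $i\le N_0$ the bound $length(\gamma)\ge K\lambdap^i-C$ is trivial on enlarging $C$. Fix $\epsilon\in(0,1-\lambdap^{-1})$ and set $\delta=\lambdap^{-1}+2\epsilon$; Lemma~\ref{lm:vp-legality} then supplies constants $L$ and $N$ such that any vanishing path $\beta$ of length at least $L$ satisfies $LEG_1([\sigma'^N\tau(\beta)])\ge\delta$. By Lemma~\ref{lm:ng-vp}, choosing $i$ sufficiently large ensures that $[\sigma^j(\gamma)]$, which is $(i-j)$-vanishing, has length exceeding $L$ for a suitable range of indices $j<i$; consequently $LEG_1([\sigma'^N\tau\sigma^j(\gamma)])\ge\delta$ for these $j$.

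The heart of the argument is the bridging step: I need to convert legality of $[\sigma'^N\tau\sigma^j(\gamma)]$ into legality of $[\tau\sigma^{j-N}(\gamma)]$, which is the hypothesis required by Lemma~\ref{lawabiding implies growth}. These two paths differ only at their endpoints, by tracks coming from the homotopy $\sigma'^N\tau\sigma^N\simeq\tau$ whose length is bounded by a constant depending only on $N$. Such a bounded endpoint perturbation changes $LEG_1$ by a quantitatively controlled amount; the $2\epsilon$ margin above $\lambdap^{-1}$ is built in precisely to absorb this perturbation and still yield $LEG_1([\tau\sigma^{j-N}(\gamma)])\ge\lambdap^{-1}+\epsilon$.

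With the hypothesis verified, Lemma~\ref{lawabiding implies growth} applied with $n=j-N$ produces $length(\gamma)\ge K\lambdap^{j-N}\bigl(length([\tau\sigma^{j-N}(\gamma)])-C\bigr)-C'$. I then pick $j=i-N_2+N$ for a fixed constant $N_2$ chosen via a further application of Lemma~\ref{lm:ng-vp}, large enough that the $N_2$-vanishing path $[\sigma^{j-N}(\gamma)]$ has length bounded below by a positive constant independent of $i$; the quasi-isometry property of $\tau$ then forces $length([\tau\sigma^{j-N}(\gamma)])-C$ to be strictly positive, producing $length(\gamma)\ge K'\lambdap^i-C''$. The main obstacle will be the bridging step: quantifying the effect of the endpoint tracks on $LEG_1$ must be executed with enough precision to preserve a strict margin above $\lambdap^{-1}$, since any loss of margin would vitiate the call to Lemma~\ref{lawabiding implies growth}.
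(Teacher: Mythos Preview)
Your proposal is correct and follows essentially the same route as the paper: use Lemma~\ref{lm:ng-vp} to force $[\sigma^j(\gamma)]$ long, apply Lemma~\ref{lm:vp-legality} to obtain a $LEG_1$ bound on $[\sigma'^N\tau\sigma^j(\gamma)]$ with margin to spare, bridge via the homotopy $\sigma'^N\tau\sigma^N\simeq\tau$ to a $LEG_1$ bound on $[\tau\sigma^{j-N}(\gamma)]$, and then invoke Lemma~\ref{lawabiding implies growth}. The paper organizes the indices slightly differently (it sets $\alpha=[\sigma^{i-\ell}(\gamma)]$ for a fixed $\ell$ playing the role of your $N_2$, and uses an $\epsilon/2$ margin where you use $2\epsilon$ versus $\epsilon$), but the logical structure and the identification of the bridging step as the delicate point are the same.
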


\begin{proof}
  Fix a small $0 < \epsilon < 1 - \lambdap^{-1}$, and let $L_1$ and
  $N_1$ be the constants from Lemma \ref{lm:vp-legality} using $\delta
  = \lambdap^{-1} + \epsilon$.  Also let $K_1,C_1,C'_1$ be the
  constants from Lemma \ref{lawabiding implies growth} using
  $\epsilon/2$.  Let $\ell$ be large enough such that the
  $\tau$--image of an $\ell$--vanishing path has length at least
  $2C_1$ and such that an $(\ell - N_1)$--vanishing path has length at
  least $L$.  Such an $\ell$ exists by Lemma \ref{lm:ng-vp}.

  Suppose $\alpha$ is an $\ell$-vanishing path for $\sigma$.  Hence
  $\sigma^{N_1}(\alpha)$ has length at least $L_1$ and by Lemma
  \ref{lm:vp-legality}:
  $$LEG_1([\sigma'^{N_1}\tau\sigma^{N_1}(\alpha)]) \geq \lambdap^{-1} +
  \epsilon.$$ Since $\tau$ is homotopic to
  $\sigma'^{N_1}\tau\sigma^{N_1}$, by further requiring that $\ell$ be
  sufficiently large, we can guarantee that the length of an
  $\ell$--vanishing path is large enough such
  that: $$LEG_1([\tau(\alpha)]) \geq
  LEG_1([\sigma'^{N_1}\tau\sigma^{N_1}(\alpha)]) - \epsilon/2 \geq
  \lambdap^{-1} + \epsilon/2.$$

  For $i > \ell$ we can apply the above to $\alpha =
  \sigma^{i-\ell}(\gamma)$, by Lemma \ref{lawabiding implies growth}
  we have: $$length(\gamma) \geq
  K_1\lambdap^{i-\ell}(length([\tau\sigma^{i-\ell}(\gamma)]) - C_1) -
  C'_1.$$ Since $length([\tau\sigma^{i-\ell}(\gamma)]) \geq 2C_1$,
  have $length(\gamma) \geq K_1\lambdap^{i-\ell}C_1 - C'_1$.
\end{proof}


Recall that $T^n_e$ is the span of the points in $(f^n)\inv(\uc{p_e})$
where $f\co T \to T$ is a lift of $\sigma \co \Gamma \to \Gamma$ and
$p_e$ is a lift of a point in the interior of $e$.  The path joining
any pair of points in $(f^{n})\inv(p_{e})$ is an $i$--vanishing path
for some $i \leq n$.  We define a set of equivalence relations on the
set $(f^{n})\inv(p_{e})$ by $x \sim_{i} x'$ if the path connecting
them is $j$--vanishing path where $j \leq i$.  Equivalently, $x \sim_i
x'$ if $f^i(x) = f^i(x')$ for $i \leq n$.  Notice that if $x \sim_j
x'$ and $x' \sim_i x''$ for $j \leq i$ then $x \sim_i x''$.  An
$\sim_{i}$ equivalence class is called an \textit{$i$--clump}.  Notice
that the entire set $(f^n)\inv(p_e)$ is an $n$--clump.  We will also
sometimes use the term $i$--clump to refer to the subtree of $T^n_e$
spanned by the points in an individual $i$--clump.

Contained within an $i$--clump there are several $j$--clumps where
$j<i$.  A \textit{$j$--spanning tree contained in an $i$--clump} is a
subtree spanned by choosing a point in each $j$--clump contained with
$i$--clump.  We will estimate the volume of $T^n_e$ by estimating the
size and number of $i$--spanning trees contained in $T^n_e$.

\begin{lemma}\label{lm:numberjclumps}
  Let $\sigma\co \Gamma \to \Gamma$ be as in Convention \ref{con:ng}
  and $f\co T \to T$ a lift of $\sigma$.  With the notation above, for
  any $j < i \leq n$, the number of $j$--clumps contained in an
  $i$--clump of $T^n_e$ is $\sim \lambda^{i-j}$.
\end{lemma}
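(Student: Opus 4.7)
The plan is to identify each $i$-clump with a fiber of $f^i$, push the count down to the quotient $\Gamma$ via equivariance, and then invoke the Perron--Frobenius theorem for the transition matrix of $\sigma$.

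First I would fix an $i$-clump $\mathcal{I}\subset(f^n)\inv(p_e)$ and set $y=f^i(x)$ for any $x\in\mathcal{I}$. Since $f^{n-i}(y)=p_e$, every $x'\in T$ with $f^i(x')=y$ automatically lies in $(f^n)\inv(p_e)$, so in fact $\mathcal{I}=(f^i)\inv(y)$. A $j$-clump inside $\mathcal{I}$ is then determined by the common value $z=f^j(x)$, and the constraint $f^{i-j}(z)=y$ forces $z\in(f^{i-j})\inv(y)$; conversely each such $z$ produces a $j$-clump $(f^j)\inv(z)$ inside $\mathcal{I}$. Therefore the number of $j$-clumps in $\mathcal{I}$ equals $|(f^{i-j})\inv(y)|$.

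Next I would show $|(f^k)\inv(y)|=|(\sigma^k)\inv(\bar y)|$ for every non-vertex $y\in T$, where $\bar y$ denotes the image in $\Gamma$. The lift $f$ is $\phi$-equivariant for some $\phi\in\Aut(F_k)$ lifting the given outer automorphism, and the set of lifts in $T$ of any $\bar z\in(\sigma^k)\inv(\bar y)$ is a free $F_k$-orbit. Equivariance of $f^k$ together with injectivity of $\phi^k$ selects exactly one lift $z$ of $\bar z$ satisfying $f^k(z)=y$, giving the claimed bijection. Cellularity of $f^{n-i}$ combined with the fact that $p_e$ is an interior point forces $y$, and hence $\bar y$, to lie in the interior of some edge $\bar e'\subset\Gamma$.

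The final step would apply Perron--Frobenius. If $M$ is the transition matrix of $\sigma$, then $|(\sigma^{i-j})\inv(\bar y)|$ equals the $\bar e'$-row sum of $M^{i-j}$. Full irreducibility of $\phi$ makes $M$ a Perron--Frobenius matrix whose leading eigenvalue is $\lambda$, and standard Perron--Frobenius asymptotics give a constant $K\geq 1$ independent of the row $\bar e'$ with $K\inv\lambda^{i-j}\leq\sum_{\bar e''}M^{i-j}_{\bar e',\bar e''}\leq K\lambda^{i-j}$. Concatenating the three identifications yields the asserted $\sim\lambda^{i-j}$. The main subtlety is the equivariant bijection between preimages in $T$ and in $\Gamma$, since on its face $T$ is infinite; once this identification is in place, the count reduces to a routine row-sum estimate.
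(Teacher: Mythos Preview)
Your proposal is correct and follows essentially the same route as the paper. The paper's proof is terser: it asserts directly that the $j$--clumps inside an $i$--clump are parametrized by $(\sigma^{i-j})\inv$ of a point in $\Gamma$ and then appeals to irreducibility of $\sigma$ to get the estimate $\frac{1}{K_1}\lambda^\ell - C_1 \leq |(\sigma^\ell)\inv(x)| \leq K_1\lambda^\ell + C_1$, whereas you spell out the intermediate bijections (first $j$--clumps $\leftrightarrow (f^{i-j})\inv(y)$, then $(f^{i-j})\inv(y)\leftrightarrow (\sigma^{i-j})\inv(\bar y)$ via equivariance) and phrase the final step as a row-sum estimate for the transition matrix.
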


\begin{proof}
  As $\sigma\co \Gamma \to \Gamma$ is irreducible and linearly expands
  any edge by the factor $\lambda$, there exist constants $K_1 \geq 1$
  and $C_1 \geq 0$ such that for any point $x \in \Gamma$, we have
  $\frac{1}{K_1}\lambda^\ell - C_1 \leq$ the number of points in
  $\sigma^{-\ell}(x) \leq K_1\lambda^\ell + C_1$.

  Now after applying $f^{j}$ to $T$, any $j$--clump becomes a point
  and distinct $j$--clumps become distinct points.  Further applying
  $f^{i-j}$, the image of the $j$--clumps which are contained within
  an $i$--clump all map to the same point $x$.  Hence the $j$--clumps
  contained within an $i$--clump are parametrized by
  $\sigma^{-(i-j)}(p_e)$.
\end{proof}
  
\begin{lemma}\label{lm:spanning}
  Let $\sigma\co \Gamma \to \Gamma$ be as in Convention \ref{con:ng}
  and $f\co T \to T$ a lift of $\sigma$ and suppose that $\Gamma$ does
  not contain an indivisible orbit of periodic Nielsen paths.  Fix an
  $\ell > 0$, then for any $\ell < i \leq n$, the volume of a
  $(i-\ell)$--spanning tree contained in an $i$--clump is $\sim
  \lambdap^i$.
\end{lemma}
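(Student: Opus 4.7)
The plan is to prove matching upper and lower bounds on the volume of the $(i-\ell)$--spanning tree $S$ contained in an $i$--clump $K_i$.

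For the upper bound, I would first use Lemma \ref{lm:numberjclumps} to observe that the number of $(i-\ell)$--clumps contained in $K_i$ is $\sim \lambda^\ell$, hence bounded by a constant depending only on $\ell$. Next, any two points of $K_i$ are connected by a $j$--vanishing path with $j \leq i$, so by Proposition \ref{prop:lb} the diameter of (the subtree spanned by) $K_i$ is at most $K\lambdap^i$. Any subtree spanned by $N$ points whose pairwise distances are at most $D$ has total length at most $(N-1)D$ (build it by adding one vertex at a time, each step contributing a path of length at most $D$), so $\vol(S) \leq K'\lambdap^i$ for a constant $K'$ depending only on $\ell$.

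For the lower bound, I would apply Lemma \ref{lm:numberjclumps} with $j = i-1$ to see that $K_i$ contains $\sim \lambda$ distinct $(i-1)$--clumps. Assuming this count is at least $2$, one can choose representatives $v_1, v_2$ of $(i-\ell)$--clumps that lie in distinct $(i-1)$--clumps within $K_i$. Both are vertices of $S$, so the path $[v_1, v_2]$ is contained in $S$. Since $v_1 \sim_i v_2$ but $v_1 \not\sim_{i-1} v_2$, this path is an $i$--vanishing path but not an $(i-1)$--vanishing path, and the hypothesis that $\Gamma$ contains no indivisible orbit of periodic Nielsen paths lets us apply Proposition \ref{prop:ngub}, yielding $\vol([v_1, v_2]) \geq K''\lambdap^i - C''$. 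Hence $\vol(S) \geq K''\lambdap^i - C''$.

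The main obstacle I anticipate is handling the degenerate situation where $K_i$ happens to coincide with a single $(i-1)$--clump, so that the lower bound argument fails at level $i$: this corresponds to $|\sigma\inv(x)| = 1$ for $x = f^{i-1}(K_i)$. To address this, I would iteratively replace $i$ by the largest $j \leq i$ at which $K_i$ is a genuine $j$--clump containing at least two $(j-1)$--clumps, and apply the argument there; since $\sigma$ is irreducible and $|\sigma\inv(x)| \sim \lambda > 1$, the gap $i - j$ must be bounded by a constant, so the resulting lower bound $\lambdap^j$ differs from $\lambdap^i$ only by a multiplicative constant, preserving the $\sim \lambdap^i$ equivalence.
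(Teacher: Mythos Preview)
Your argument is correct and follows essentially the paper's approach: bound the number of $(i-\ell)$--clumps by a constant via Lemma~\ref{lm:numberjclumps}, then use Propositions~\ref{prop:lb} and~\ref{prop:ngub} to control the lengths of the vanishing paths joining their representatives. One simplification the paper makes for the lower bound: there is no need to exhibit an \emph{exactly} $i$--vanishing path---any two representatives from distinct $(i-\ell)$--clumps are already joined by a $j$--vanishing path with $i-\ell < j \leq i$, so Proposition~\ref{prop:ngub} directly gives length $\geq K'\lambdap^{\,i-\ell+1} - C' \sim \lambdap^i$ (since $\ell$ is fixed), and your degenerate-case analysis becomes unnecessary.
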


\begin{proof}
  As in Lemma \ref{lm:numberjclumps}, the number of $(i -
  \ell)$--clumps contained in an $i$--clump is bounded above and below
  by some constants independent of $i$.  Thus any $(i-\ell)$--spanning
  tree is covered by a bounded number of $i$--vanishing paths.  By
  Propositions \ref{prop:lb} and \ref{prop:ngub} the length of an
  $i$--vanishing path is bounded above and below by linear functions
  of $\lambdap^i$.
\end{proof}    

Putting these together we can show:

\begin{proposition}\label{prop:ngvolslice}
  Let $\sigma\co \Gamma \to \Gamma$ be as in Convention \ref{con:ng}
  and $f\co T \to T$ a lift of $\sigma$ and suppose that $\Gamma$ does
  not contain an indivisible orbit of periodic Nielsen paths.  Then
  for any edge $e \subset T$:
  \[ \vol(T_e^n) \sim \lambda^n + \lambda^{n-1}\lambdap + \cdots +
  \lambda\lambdap^{n-1} + \lambdap^n.\]
\end{proposition}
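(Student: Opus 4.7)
The plan is to establish the recurrence $\vol(T_e^n) \sim \lambda^\ell \vol(T_e^{n-\ell}) + \lambdap^n$ for a fixed $\ell \geq 2$ and then iterate. Solving the recurrence yields $\vol(T_e^n) \sim \sum_{k=0}^{\lfloor n/\ell\rfloor} \lambda^{k\ell}\lambdap^{n-k\ell}$, a subsum of $\sum_{i=0}^{n} \lambda^{n-i}\lambdap^i$ that is equivalent to the full sum up to multiplicative constants depending on $\ell$, $\lambda$, and $\lambdap$, giving the claimed asymptotic.

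To prove the recurrence I would decompose $T_e^n$ via its $(n-\ell)$-clumps. By Lemma~\ref{lm:numberjclumps} there are $\sim \lambda^\ell$ such clumps $C_1, \ldots, C_N$; each $C_j$ corresponds to a point $y_j \in (f^\ell)^{-1}(p_e)$ (necessarily in the interior of an edge, since $f^\ell$ is cellular), and its subtree $T(C_j) \subset T_e^n$ is the span of $(f^{n-\ell})^{-1}(y_j)$. By the basepoint-independence encoded in Proposition~\ref{prop:estimate}, $\vol(T(C_j)) \sim \vol(T_e^{n-\ell})$ for every $j$. Picking one representative from each $C_j$ yields an $(n-\ell)$-spanning tree $S_{n-\ell}$, whose volume is $\sim \lambdap^n$ by Lemma~\ref{lm:spanning} applied with $i = n$ (valid once $n > \ell$). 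Since $T_e^n = S_{n-\ell} \cup \bigcup_j T(C_j)$, the upper bound $\vol(T_e^n) \lesssim \lambdap^n + \lambda^\ell \vol(T_e^{n-\ell})$ follows by summing. For the lower bound, $\vol(T_e^n) \geq \max\{\vol(S_{n-\ell}),\,\vol(\bigcup_j T(C_j))\}$; assuming the pairwise overlaps $\vol(T(C_j) \cap T(C_{j'}))$ are bounded by a constant depending on $\ell$ and $\phi$ but not on $n$, one has $\vol(\bigcup_j T(C_j)) \gtrsim \lambda^\ell \vol(T_e^{n-\ell})$, and averaging the two resulting lower bounds gives the matching recurrence lower bound.

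The main obstacle is controlling the pairwise overlaps of the subtrees $T(C_j)$. Under $f^{n-\ell}$, distinct $T(C_j)$'s collapse (after folding) onto distinct preimage points $y_j$, so any overlap corresponds to geodesic segments in $T$ whose images under $f^{n-\ell}$ lie in two different ``back-folding'' walks based respectively at $y_j$ and $y_{j'}$. Bounded cancellation should force such segments to have uniformly bounded length, independent of $n$; making this precise---perhaps by combining the control on vanishing paths from Lemma~\ref{lm:ng-vp} with bounded cancellation applied to $f^\ell$---is where the principal technical work lies. Once the overlap bound is in place, iterating the recurrence from a bounded base case delivers the proposition.
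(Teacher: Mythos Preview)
Your upper-bound recurrence is fine and parallels the paper's argument. The gap is in the lower bound: the assertion that the pairwise overlaps $\vol(T(C_j)\cap T(C_{j'}))$ are bounded independently of $n$ is false. Whenever $y_j$ and $y_{j'}$ lie in the same edge of $T$---and for $\ell\ge 2$ this happens for many pairs, since $f^\ell$ sends each edge over a legal path crossing any given edge on the order of $\lambda^\ell$ times---the preimage sets $(f^{n-\ell})^{-1}(y_j)$ and $(f^{n-\ell})^{-1}(y_{j'})$ are in canonical bijection, with corresponding points lying in the same edge of $T$ at distance $O(\lambda^{-(n-\ell)})$. The spanned subtrees $T(C_j)$ and $T(C_{j'})$ therefore coincide except near their terminal vertices, so their overlap has volume comparable to $\vol(T_e^{n-\ell})$, unbounded in $n$. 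Your bounded-cancellation heuristic does not help: bounded cancellation for $f^\ell$ says nothing about how the subtrees $T(C_j)$ sit inside $T$, and the cancellation constant for $f^{n-\ell}$ is not uniform in $n$. With overlaps this large, $\vol\bigl(\bigcup_j T(C_j)\bigr)$ is only comparable to $\vol(T_e^{n-\ell})$, not to $\lambda^\ell\vol(T_e^{n-\ell})$, and iterating your recurrence recovers nothing beyond the easy bound $\max\{\lambda,\lambdap\}^n$ of Corollary~\ref{co:lowerbound}; in particular you miss the extra factor of $n$ when $\lambda=\lambdap$.

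The paper controls a different overlap: that of the spanning tree $S_{n-\ell}$ with each individual clump $T(C_j)$. This intersection lies inside $T(C_j)$ and so has diameter at most $K\lambdap^{n-\ell}$ by Proposition~\ref{prop:lb}; it is simultaneously a subtree of $S_{n-\ell}$, which has only boundedly many leaves (on the order of $\lambda^\ell$, a constant once $\ell$ is fixed). Hence each such overlap has volume $O(\lambdap^{n-\ell})$. Removing all of them from $S_{n-\ell}$ leaves a forest whose volume is still bounded below by a linear function of $\lambdap^n$ once $\ell$ is chosen with $\lambdap^\ell$ large relative to $K$, and this forest is disjoint from every clump. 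The induction then contributes a fresh term of order $\lambdap^i$ at each scale $i$, and it is the sum of these that produces the full expression.
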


\begin{proof}
  As $T^n_e$ is covered by $i$--vanishing paths for $i \leq n$, we can
  also cover $T^n_e$ using $(i-1)$--spanning trees contained in
  $i$--clumps.  The number of such $(i-1)$--spanning trees is
  equal to the number of $i$--clumps.  Thus putting together Lemmas
  \ref{lm:numberjclumps} and \ref{lm:spanning} we get that
  $\vol(T^n_e)$ is bounded above by a linear function of $\lambda^n +
  \lambda^{n-1}\lambdap + \cdots + \lambda\lambdap^{n-1} +
  \lambdap^n$.

  To get the lower bound we need to look carefully at the overlap of
  the these spanning trees.  Let $K$ by the constant appearing in
  Proposition \ref{prop:lb} and choose $\ell$ such that
  $\lambdap^\ell$ is much bigger than $K$.  We will proceed by
  induction on $i$ and show that the volume of an $i$--clump is
  bounded below by a linear function of $\lambdap^i +
  \lambda^\ell\lambdap^{i - \ell} + \lambda^{2\ell}\lambdap^{i- 2\ell}
  + \cdots$.

  Fix an $i$--clump and let $T_0$ be an $(i - \ell)$--spanning tree
  contained within this $i$--clump.  Let $j = i - \ell$ and denote the
  $j$--clumps contained within this $i$--clump by
  $T_1,\ldots,T_{M_j}$.  By induction, the volume of any of the $T_m$
  is bounded below by a linear function of $\lambdap^j +
  \lambda^\ell\lambdap^{j - \ell} + \lambda^{2\ell}\lambdap^{j- 2\ell}
  + \cdots$.  Notice that $M_j \approx \lambda^\ell$.  For any $T_m$,
  the overlap of $T_0$ and $T_m$ has volume bounded above by
  $K\lambdap^j$.  This follows as the overlap is contained within a
  $j$--vanishing path.  Therefore the volume of $\widehat{T}_0$, the
  subforest of $T_0$ obtained by removing any overlap with some $T_m$
  has volume bounded below by a linear function of $\lambdap^i$.
  Applying induction this shows that the volume of an $i$--clump is
  bounded below by a linear function of $\lambdap^i +
  \lambda^\ell\lambdap^{i - \ell} + \lambda^{2\ell}\lambdap^{i- 2\ell}
  + \cdots \sim \lambdap^i + \lambda\lambdap^{i-1} + \cdots +
  \lambda^{i-1}\lambdap + \lambda^i$.  For $i=n$, we achieve the
  desired lower bound for $\vol(T^n_e)$.
\end{proof}

Putting together Propositions \ref{prop:estimate} and
\ref{prop:ngvolslice} we get the second case of Theorem
\ref{th:growth} from the Introduction.

\begin{theorem}\label{th:nongeo}
  Suppose $\phi \in \Out(F_k)$ is a fully irreducible automorphism
  with expansion factor $\lambda$ and a nongeometric stable tree.  Let
  $\lambdap$ be the expansion factor of $\phi\inv$.  Then for any
  $T,T' \in cv_k$:
  \[i(T,T'\phi^n) \sim \lambda^n + \lambda^{n-1}\lambdap + \cdots +
  \lambda\lambdap^{n-1} + \lambdap^n.\]
\end{theorem}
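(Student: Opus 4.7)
The plan is to reduce the statement to the two main technical results already established in the paper, namely Proposition \ref{prop:estimate} and Proposition \ref{prop:ngvolslice}, via the dictionary provided by Theorem \ref{th:geo-nongeo}.

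First I would fix a stable train-track representative $\sigma \co \Gamma \to \Gamma$ for $\phi$ (existence is guaranteed by the refinement of \cite[Lemma 3.2]{ar:BH92} cited in the text). Since the hypothesis is that the stable tree $T^+$ is nongeometric, Theorem \ref{th:geo-nongeo} tells us that $\sigma$ admits no indivisible orbit of periodic Nielsen paths. This is precisely the hypothesis required to apply the nongeometric case of the growth analysis in Section \ref{sc:growth}.

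Next, let $T$ denote the universal cover of $\Gamma$, viewed as an element of $\cv$, and let $f \co T \to T$ be a lift of $\sigma$. Choose any edge $e \subset T$ and let $p_e$ be its midpoint. Proposition \ref{prop:estimate} gives
\[ i(T,T'\phi^n) \sim \vol(T^n_e), \]
where $T^n_e$ is the subtree of $T$ spanned by $(f^n)\inv(p_e)$, and crucially this equivalence holds for any $T,T' \in \cv$ (the inherent dependence on the particular trees having been absorbed into the $\sim$ constants via Proposition \ref{prop:treeindependent}).

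Finally, since $\sigma$ has no indivisible orbit of periodic Nielsen paths, Proposition \ref{prop:ngvolslice} applies and gives
\[ \vol(T^n_e) \sim \lambda^n + \lambda^{n-1}\lambdap + \cdots + \lambda\lambdap^{n-1} + \lambdap^n. \]
Combining the last two equivalences yields the theorem. The only step where any real care is needed is verifying that the hypotheses of Proposition \ref{prop:ngvolslice} are in force; all of the hard work — the upper bound on lengths of $i$--vanishing paths via Proposition \ref{prop:lb}, the matching lower bound $\gtrsim \lambdap^i$ via Proposition \ref{prop:ngub}, and the clump/spanning-tree bookkeeping used to assemble $T^n_e$ — is already done earlier in the section, so there is no serious obstacle remaining; the main point is simply to invoke Theorem \ref{th:geo-nongeo} to translate the geometric hypothesis on $T^+$ into the combinatorial hypothesis on $\sigma$ needed by the propositions.
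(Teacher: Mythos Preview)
Your proposal is correct and follows essentially the same approach as the paper, which simply states that the theorem is obtained by putting together Propositions \ref{prop:estimate} and \ref{prop:ngvolslice}. You have additionally made explicit the use of Theorem \ref{th:geo-nongeo} to translate the nongeometricity of $T^+$ into the absence of an indivisible orbit of periodic Nielsen paths, which is the hypothesis needed for Proposition \ref{prop:ngvolslice}; this is implicit in the paper but worth spelling out.
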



\subsection{\texorpdfstring{$T^+$ geometric}{T+ geometric}}\label{ssc:geo}

We will now look at the case when $T^+$ is geometric.  We need the
following well-known result providing a special representative of an
automorphism with geometric stable tree.

\begin{proposition}\label{prop:parageo-map}
  Suppose that $T^+$, the stable tree of a fully irreducible
  automorphism $\phi$, is geometric.  Then some power of $\phi$ has a
  train-track representative $\sigma\co \Gamma \to \Gamma$ such that:
\begin{itemize}
    \item[1.]  $\sigma$ has exactly one indivisible Nielsen path;
    
    \item[2.]  $\sigma$ has a unique illegal turn;
     
    \item[3.]  $\sigma$ is homotopic to the result of iteratively
      folding the indivisible Nielsen path along the illegal turn.

\end{itemize}
\end{proposition}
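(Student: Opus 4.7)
The plan is to construct the desired representative in three stages, one per claimed property. First, invoke the Bestvina--Handel algorithm \cite{ar:BH92} to produce, after replacing $\phi$ by a power $\phi^m$, a \emph{stable} train-track representative $\sigma_0 \co \Gamma \to \Gamma$. Stability ensures (by the extension of \cite[Lemma~3.2]{ar:BH92} cited in the paper) that there is at most one orbit of indivisible Nielsen paths under $\sigma_0$. Since $T^+$ is geometric, Theorem \ref{th:geo-nongeo} forces this orbit to be nonempty. Passing to a further power so that the orbit has period one then produces a unique indivisible Nielsen path $\gamma = \alpha \cdot \bar{\beta}$, with $|\alpha| = |\beta|$, yielding property (1).

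For (2), the illegal turn where $\alpha$ and $\bar{\beta}$ meet is already forced by $\gamma$. I would rule out any other illegal turn by exploiting the fact that, when $T^+$ is geometric, $T^+$ is dual to a measured foliation on a $2$--complex in the sense of Levitt--Paulin \cite{ar:LP97} and Bestvina--Feighn \cite{un:BF}. Under this duality the illegal turns of an optimally chosen stable train-track representative correspond bijectively to the singular leaves of the foliation. Since $\gamma$ already accounts for the unique orbit of singular leaves (that is how Theorem \ref{th:geo-nongeo} is proved), a second illegal turn would either produce a second orbit of INPs (contradicting stability) or arise from an invariant subforest (contradicting the irreducibility of $\phi$). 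Hence $\sigma$ has a unique illegal turn.

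For (3), once (1) and (2) are established, the structure of $\sigma$ is essentially forced by the condition $[\sigma(\gamma)] = \gamma$ together with the train-track property. Let $\hat{\Gamma}$ be the graph obtained from $\Gamma$ by identifying the initial segments of $\alpha$ and $\bar{\beta}$ of length $(\lambda - 1)|\alpha|/\lambda$ at the illegal turn; this fold is a homotopy equivalence whose inverse, followed by remarking $\hat{\Gamma}$ back to $\Gamma$, reproduces $\sigma$ up to homotopy. Iterating the fold recovers higher powers of $\sigma$, giving (3).

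The principal obstacle is step (2): bootstrapping from ``at most one orbit of indivisible Nielsen paths'' to ``exactly one illegal turn''. I do not see a purely combinatorial way to eliminate a putative extra illegal turn that does not risk introducing a new Nielsen path somewhere else; the cleanest route is the geometric one through the foliation--tree dictionary, which converts the count of illegal turns into a count of singularities of a foliation already constrained by the hypothesis that $T^+$ is geometric.
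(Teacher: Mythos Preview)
Your approach to property~(1) matches the paper's. The difficulties are in (2) and (3).

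For (2), the paper does \emph{not} go through the foliation--tree dictionary at all; it simply cites Lemma~3.9 of \cite{ar:BH92}. That lemma (together with Lemma~3.4) says, for a stable train-track map, that each indivisible Nielsen path contains a unique illegal turn and---crucially---that every illegal turn of the map is the illegal turn of some indivisible Nielsen path in the orbit. Once you have passed to a power so there is a single indivisible Nielsen path, uniqueness of the illegal turn is immediate. Your proposed geometric argument is not needed, and as written it has a gap: the dichotomy ``a second illegal turn would produce a second orbit of INPs or arise from an invariant subforest'' is precisely the content you would have to prove, and the proof of that is the Bestvina--Handel stability argument you are trying to avoid. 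The correspondence you sketch between illegal turns and singular leaves is not a standard statement and would itself require the \cite{ar:BH92} machinery to justify. So the ``purely combinatorial way'' you say you do not see is exactly what the paper uses, and it is a one-line citation.

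For (3), the paper's argument is also different from and cleaner than yours. Rather than constructing a single explicit fold of length $(\lambda-1)|\alpha|/\lambda$ and asserting that iterating it recovers $\sigma$, the paper invokes Stallings' theorem \cite{ar:S83} that any homotopy equivalence of graphs factors as a composition of folds. Since a fold can only identify germs forming an illegal turn, and by (2) there is only one illegal turn, every fold in the Stallings factorization of $\sigma$ must occur at that turn---which is the illegal turn of the Nielsen path. The paper then uses the remark on page~23 of \cite{ar:BH92} that folding preserves stability, so the argument can be iterated. Your version asserts the conclusion for a single fold but does not explain why the full factorization consists only of such folds, and does not address why the intermediate maps remain train-track/stable so that the argument continues to apply.
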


\begin{proof}
  Let $\sigma\co \Gamma \to \Gamma$ be a stable train-track
  representative for $\phi$. Then $\sigma$ contains a unique
  indivisible orbit of periodic Nielsen paths, each path of which
  contains a unique illegal turn.  This follows by Lemmas 3.4 and 3.9
  of \cite{ar:BH92}. Thus replacing $\sigma$ by a power, we can
  achieve 1.  Further, by Lemma 3.9 of \cite{ar:BH92}, after replacing
  $\sigma$ by this power, $\sigma\co \Gamma \to \Gamma$ has a unique
  illegal turn, hence 2 holds.

  Any homotopy equivalence between graphs factors into a sequence of
  folds \cite{ar:S83}.  Since edges in a legal turn do not get
  identified by $\sigma$, the first fold in the factored sequence is
  folding the indivisible Nielsen path.  Folding a pair of edges of a
  stable train-track map results in a stable train-track map (remark
  on page 23 in \cite{ar:BH92}).  Hence the factored sequence is
  iteratively folding the indivisible Nielsen path.
\end{proof}

We train-track representative satisfying the conclusions of the
preceding proposition is called a \emph{parageometric} train-track
representative.  For $t \geq 0$ let $\Gamma_t$ be the graph resulting
from $\Gamma$ by iteratively folding the indivisible Nielsen path along
the illegal turn at a constant rate, where for $t \in \Z_{\geq 0}$ the
induced map $\sigma_{0t}\co \Gamma \to \Gamma_t$ is $\sigma^t$.  For
$0\leq s\leq t$ let $\sigma_{st}\co \Gamma_s \to \Gamma_t$ by the
induced map.

The dichotomy in Theorem \ref{th:growth} between geometric and
nongeometric growth rates stems from the following observation.
Suppose $\sigma\co \Gamma \to \Gamma$ has an indivisible Nielsen path
$\gamma$.  For small $\epsilon > 0$ let $\gamma_\epsilon$ be the
subpath of $\gamma$ obtained by removing $\epsilon$ neighborhoods of
its endpoints.  Then $\gamma_\epsilon$ is an $i$--vanishing path for
$\sigma$ and moreover, $i \to \infty$ as $\epsilon \to 0$.  This
observation follows as we can write $\gamma$ as a concatenation of
legal paths $\gamma = a_0 \cdot b_0\cdot
\overline{b_1}\cdot\overline{b_1}$ where $\sigma(a_i) = a_i\cdot b_i$
and $\sigma(b_0) = \sigma(b_1)$.  Hence for large $n$ (depending on
$\epsilon$), $[\sigma^n(\gamma_\epsilon)] \subset b_0 \cdot
\overline{b_1}$ and therefore is an $n+1$--vanishing path.  Therefore,
in contrast with Lemma \ref{lm:ng-vp} and Proposition \ref{prop:ngub}
in the nongeometric case, there are bounded length $i$--vanishing
paths for large $i$.  In fact, in the geometric case we claim that any
$i$--vanishing path is a composition tightened relative to its
endpoints of vanishing paths that are contained in this Nielsen path.
A vanishing path contained in an indivisible Nielsen path is called a
\emph{special vanishing path}.

We begin with a simple criterion for finding special vanishing paths.

\begin{lemma}\label{lm:special}
  Suppose $\sigma \co \Gamma \to \Gamma$ is a parageometric
  train-track representative.  There is a constant $T > 0$ such that
  any vanishing path for $\sigma_{0t}\co \Gamma \to \Gamma_t$ for $t
  \leq T$ is a special vanishing path.
\end{lemma}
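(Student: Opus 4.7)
The plan is to exploit the description of $\sigma_{0t}$ as a partial fold provided by Proposition~\ref{prop:parageo-map}. Let $v$ be the unique illegal turn vertex of $\sigma$ and write the indivisible Nielsen path as $\mu = \alpha\cdot\bar\beta$, where $\alpha,\beta$ are legal paths of equal length terminating at $v$. For $t \in [0,1]$ the quotient map $\sigma_{0t}\co\Gamma\to\Gamma_t$ identifies the initial segment of $\alpha$ of length $\epsilon(t)$ (measured from $v$) with the corresponding initial segment of $\beta$, where $\epsilon\co[0,1]\to [0,\mathrm{length}(\alpha)]$ is continuous and monotone with $\epsilon(0) = 0$. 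I would choose $T \in (0,1)$ small enough that $\epsilon(T) < \mathrm{length}(\alpha)$, so the folded region $N_T$ is a proper subarc of $\mu$ through $v$; for such $T$ the fold is still a genuine type II Stallings fold and $\sigma_{0t}$ is a homotopy equivalence for all $t \leq T$.

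The first step is the elementary structural observation: for $t \leq T$, two distinct points $x,y \in \Gamma$ satisfy $\sigma_{0t}(x) = \sigma_{0t}(y)$ if and only if, up to swapping, $x \in \alpha$, $y \in \beta$, and $d_\Gamma(x,v) = d_\Gamma(y,v) \leq \epsilon(t)$. In particular $\sigma_{0t}$ is injective on $\Gamma\setminus N_t$. This is immediate from the definition of the partial fold.

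The second step is to pass to the universal cover. Let $f_{0t}\co T \to T_t$ be an equivariant lift of $\sigma_{0t}$. Given a vanishing path $\iota\co[0,1]\to\Gamma$ for $\sigma_{0t}$, lift it to an arc $\tilde\iota\co[0,1]\to T$; since $T$ is a tree any immersed interval is embedded, so $\tilde\iota$ is embedded. Null-homotopy of $\sigma_{0t}\iota$ rel endpoints translates to $f_{0t}(\tilde\iota(0)) = f_{0t}(\tilde\iota(1))$. The equivariant form of the structural observation places $\tilde\iota(0)$ and $\tilde\iota(1)$ symmetrically about a lift $\tilde v$ of $v$, on the two legal halves of some lift $\tilde\mu$ of $\mu$, at a common distance at most $\epsilon(t)$. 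Because $T$ is a tree, the unique embedded arc joining these endpoints is the subpath of $\tilde\mu$ through $\tilde v$, so $\tilde\iota$ is exactly this subpath. Projecting back, $\iota$ is a subpath of $\mu$, hence a special vanishing path.

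The main obstacle is the rigorous verification of the structural observation in the first step: one must argue that no identifications of $\sigma_{0t}$ arise away from the folded region $N_t \subset \mu$, and that the equivariant lift of the partial fold over $T$ introduces no new identifications beyond those generated by the $F_k$-orbit of the single fold at $v$. This amounts to checking that $\sigma_{0t}$ restricts to a homeomorphism on $\Gamma\setminus N_t$, a purely local statement that follows from the explicit description of one step of folding in Proposition~\ref{prop:parageo-map}.
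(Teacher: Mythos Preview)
Your approach is essentially the paper's: choose $T$ small enough that the partial fold has not yet identified any vertices, so the only points identified by $\sigma_{0t}$ lie on the two edges forming the unique illegal turn, which sits inside $\mu$. The paper says this in two sentences without lifting; your passage to the universal cover makes the conclusion about the arc more explicit but is not really needed.

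One genuine imprecision: the bound $\epsilon(T) < \mathrm{length}(\alpha)$ does \emph{not} guarantee your structural observation. The legal path $\alpha$ may cross several edges, and once the fold passes an intermediate vertex the identification pattern in $\Gamma$ (and equivariantly in $T$) can become more complicated than ``equidistant points on $\alpha$ and $\beta$''; in particular the statement that $\sigma_{0t}$ is a homeomorphism off $N_t$ and that different lifts of the fold do not interact can fail. What you actually need --- and what the paper chooses --- is $T$ small enough that $\sigma_{0t}$ identifies no pair of vertices, equivalently $\epsilon(T)$ less than the lengths of the two edges forming the illegal turn. With that corrected choice your argument goes through and coincides with the paper's.
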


\begin{proof}
  Choose $T$ such that $\sigma_{0t}\co \Gamma \to \Gamma_{t}$ does not
  identify a pair of vertices of $\Gamma$ for $t \leq T$.  Then any
  vanishing path of $\sigma_{0t}$ is contained within a pair of edges
  that are partially folded together by $\sigma_{0t}$, and the turn
  between these edges is illegal.  As $\sigma$ has a unique illegal
  turn and this illegal turn is contained in the unique Nielsen path,
  any vanishing path for $\sigma_{0t}$ is contained in the Nielsen
  path and hence is a special vanishing path.
\end{proof}

We can now show that vanishing paths can be covered by special
vanishing paths.

\begin{proposition}\label{prop:geo-vp}
  Suppose $\sigma \co \Gamma \to \Gamma$ is a parageometric
  train-track representative and $\gamma$ is $i$--vanishing path for
  $\sigma$.  Then $\gamma = [\gamma_1 \cdots \gamma_\ell ]$ where for
  $j = 1,\ldots,\ell$, the path $\gamma_j$ is a special vanishing
  path.
\end{proposition}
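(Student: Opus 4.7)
The plan is induction on the vanishing index $i$, using Proposition \ref{prop:parageo-map}(3) to control the folding. The base case $i \leq T$, with $T$ the constant from Lemma \ref{lm:special}, is the lemma itself: $\gamma$ is already a special vanishing path, so we take $\ell = 1$.

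For the inductive step with $i > T$, I would consider the tightened image $\gamma'' = [\sigma^{T}(\gamma)]$, which is an $(i-T)$-vanishing path for $\sigma$. The inductive hypothesis furnishes a factorization $\gamma'' = [\gamma''_1 \cdots \gamma''_m]$ in which each $\gamma''_j$ is contained in the (unique) indivisible Nielsen path of $\sigma$. The goal is then to pull this factorization back to $\gamma$ through $\sigma^{T}$. Passing to the universal cover with a lift $f$ of $\sigma$, the folds performed by $f^{T}$ partition $\gamma$ into alternating sub-arcs: sub-arcs collapsed to a point by $f^{T}$ and sub-arcs surviving (after tightening) in $\gamma''$. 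The collapsed sub-arcs are $T$-vanishing paths for $\sigma$ and are hence already special by Lemma \ref{lm:special}. Interleaving these with the lifts of the $\gamma''_j$'s produces the desired factorization of $\gamma$ into special vanishing paths.

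The main obstacle will be showing that each surviving sub-arc of $\gamma$ actually lies inside a single lift of the indivisible Nielsen path, so that its projection to $\Gamma$ is a sub-arc of the Nielsen path and therefore a special vanishing path. This is where Proposition \ref{prop:parageo-map}(3) is essential: because $\sigma$ is obtained purely by iteratively folding the indivisible Nielsen path along its unique illegal turn, all identifications produced by $f^{T}$ occur inside lifts of the Nielsen path in the universal cover. Combined with the fact that $\sigma^{T}$ fixes the Nielsen path up to tightening and maps it onto itself, this forces the pullback along $\gamma$ of each $\gamma''_j \subset$ Nielsen path to lie in a single lift of the Nielsen path. Making this bookkeeping precise, and verifying that the two types of sub-arcs really alternate to cover all of $\gamma$, constitutes the bulk of the argument.
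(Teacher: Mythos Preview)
Your inductive framework has a real gap, and it stems from a conflation between two different kinds of decomposition. The proposition asks for paths $\gamma_j$ with $\gamma = [\gamma_1 \cdots \gamma_\ell]$ after tightening; these $\gamma_j$ are \emph{not} required to be sub-arcs of $\gamma$, and in the paper's construction they are not. Your scheme, by contrast, partitions $\gamma$ itself into alternating collapsed and surviving sub-arcs. The collapsed sub-arcs are indeed special by Lemma~\ref{lm:special}, but the surviving sub-arcs are legal segments of $\gamma$, and a legal segment is never a vanishing path. Your claimed remedy---that each surviving sub-arc lies inside a lift of the Nielsen path---is not justified and is generally false: the fact that all folds of $\sigma^T$ occur along the Nielsen path constrains only what gets identified (the collapsed parts), not the legal pieces that survive untouched. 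A legal segment of a vanishing path can perfectly well traverse edges or turns that do not appear in the Nielsen path at all. Finally, the phrase ``lifts of the $\gamma''_j$'s'' is not well-defined: the inductive decomposition $\gamma'' = [\gamma''_1\cdots\gamma''_m]$ lives in the image and bears no natural relation to the collapsed/surviving partition of $\gamma$; there is no canonical way to pull the endpoints of the $\gamma''_j$ back through $f^T$ to points on $\gamma$.

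The paper's argument runs the induction in the opposite direction and avoids all of this. Rather than pushing forward by $\sigma^T$ and pulling back a decomposition, it uses the continuous folding $\sigma_{0t}$ and looks at the path just \emph{before} it vanishes: there is a $\delta>0$ so that $[\sigma_{0,\,t_\gamma-\delta}(\gamma)]$ has a single illegal turn. That single illegal turn is the unique illegal turn of $\Gamma$, so at that turn one can align a lift of $\gamma$ with a lift of the Nielsen path $N$ in the universal cover. Writing $\gamma=\gamma_1\cdot\gamma_0\cdot\gamma_2$ and $N=\beta_1\cdot\gamma_0\cdot\beta_2$ for the common overlap $\gamma_0$, one finds subsegments $\beta'_1\subset\beta_1$, $\beta'_2\subset\beta_2$ so that $a_0=\beta'_1\cdot\gamma_0\cdot\beta'_2$ is a special vanishing path while $a_1=\gamma_1\cdot\overline{\beta'_1}$ and $a_2=\overline{\beta'_2}\cdot\gamma_2$ are vanishing paths for $\sigma_{0,\,t_\gamma-\delta}$, hence handled by induction. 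The decomposition $\gamma=[a_1\cdot a_0\cdot a_2]$ is then exactly of the required form, and the pieces $a_1,a_2$ genuinely detour off $\gamma$ into $N$---which is precisely what your sub-arc partition cannot produce.
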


\begin{proof}
  Since folding $\Gamma$ does not collapse loops and since $\Gamma$
  has a unique illegal turn, for any path $\alpha$ in $\Gamma$, there
  is a lower bound on the distance between two illegal turns in
  $\sigma_{0t}(\alpha)$ independent of $\alpha$ and $t$.  Therefore,
  there is an $\epsilon$ such that if $\alpha$ is a vanishing path
  with $length(\alpha) < \epsilon$ then $\alpha$ contains a single
  illegal turn.  Hence, there is a $\delta > 0$ such that if $\gamma$
  is a vanishing path for $\sigma_t\co \Gamma \to \Gamma$ with $t$
  minimal, then $[\sigma_{0t-\delta}(\gamma)]$ contains a single
  illegal turn.

  We will prove the proposition by induction.  The inductive claim is
  as follows: if $\gamma$ is a vanishing path for
  $\sigma_{0t_\gamma}\co \Gamma \to \Gamma_{t_\gamma}$ with $t_\gamma$
  minimal, then $\gamma = [a_1 \cdot a_0 \cdot a_2 ]$ where $a_0$ is a
  special vanishing path and $a_1$ and $a_2$ are vanishing paths for
  $\sigma_{0t_1}\co \Gamma \to \Gamma_{t_1}$ where $t_1 = t_\gamma -
  \delta$.  The basecase, where $t_\gamma \leq T$ where $T$ is the
  constant from Lemma \ref{lm:special} is proved by Lemma
  \ref{lm:special} as in this case $\gamma = a_0$ is a special
  vanishing path.
  
  By construction a single illegal turn in $\gamma$ is still present
  in $[\sigma_{0,t_1}(\gamma)]$.  In the universal cover of $\Gamma$
  there are lifts of the vanishing path $\gamma$ and the Nielsen path
  that share a lift of this illegal turn.  Write the lift of $\gamma$
  as $\gamma_1\cdot \gamma_0\cdot \gamma_2$ where $\gamma_0$ is the
  common overlap between the lift of $\gamma$ and the lift of the
  Nielsen path.  This decomposes the lift of the Nielsen path into
  $\beta_1 \cdot \gamma_0 \cdot \beta_2$.  Notice that $\beta_1$ and
  $\beta_2$ are legal paths.

  Since $[\sigma_{0t_1}(\gamma)]$ only contains a single illegal turn,
  as in Lemma \ref{lm:special} we see that $[\sigma_{0t_1}(\gamma)]$
  is contained in the image of the Nielsen path in $\Gamma_{t_1}$.
  Therefore, we can find a subpath $\beta'_1$ of $\beta_1$ such that
  $\gamma_1\cdot \overline{\beta'_1}$ is a vanishing path for
  $\sigma_{0t_1}$.  Similarly we can find a subpath $\beta'_2$ of
  $\beta_2$ such that $\gamma_2\cdot \overline{\beta'_2}$ is also a
  vanishing path for $\sigma_{0t_1}$.    Then
  $\beta'_1 \cdot \gamma_0 \cdot \beta'_2$ is a special vanishing
  path.  See Figure \ref{fig:svp}.  
  \begin{figure}[h]
    \psfrag{g1}{$\gamma_1$}
    \psfrag{g0}{$\gamma_0$}
    \psfrag{g2}{$\gamma_2$}
    \psfrag{b1}{$\beta'_1$}
    \psfrag{b2}{$\beta'_2$}
    \centering
    \includegraphics{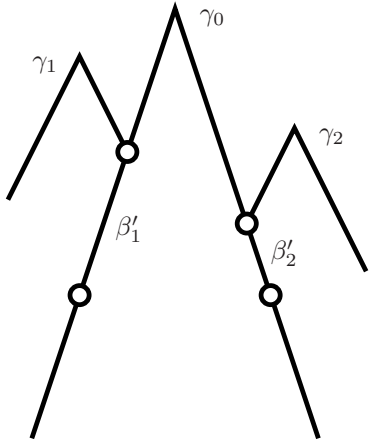}
    \caption{Decomposing the vanishing path $\gamma$ in the proof of
      Proposition \ref{prop:geo-vp}.}
    \label{fig:svp}
  \end{figure}
  By induction, we know that both $\gamma_1 \cdot\overline{\beta'_1}$
  and $\gamma_2\cdot \overline{\beta'_2}$ are the compositions of
  special vanishing paths pulled tight.  Therefore, $\gamma$ is the
  composition of special vanishing paths pulled tight.
\end{proof}

We can now prove the first case of Theorem \ref{th:growth} from the
Introduction.

\begin{theorem}\label{th:geo}
  Suppose $\phi \in \Out(F_k)$ is a fully irreducible automorphism
  with expansion factor $\lambda$ and a geometric stable tree.  Then
  for any $T',T'' \in cv_k$: \[i(T',T''\phi^n) \sim \lambda^{n}.\]
\end{theorem}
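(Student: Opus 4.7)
The plan is to reduce, as in the non-geometric case, to estimating $\vol(T^n_e)$ via Proposition \ref{prop:estimate}, and then to exploit the structural fact (Proposition \ref{prop:geo-vp}) that in the geometric case every vanishing path is a tightened concatenation of uniformly bounded pieces. First I would replace $\phi$ by a power so that, by Proposition \ref{prop:parageo-map}, there is a parageometric train-track representative $\sigma\co\Gamma\to\Gamma$ with a unique iNp $\gamma_0$ of length $L_0$; this replacement preserves the $\sim$-class of $i(T',T''\phi^n)$. The lower bound $\vol(T^n_e)\geq \frac{1}{K}\lambda^n-C$ is immediate from Corollary \ref{co:lowerbound}, so the entire content is the matching upper bound.

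The main step is the following analogue of Lemma \ref{lm:spanning} in the geometric setting, in which the growth in $i$ disappears:
\begin{quote}
\emph{Claim.} There is a constant $K_0$ such that, for every $1\le i\le n$ and every $i$-clump in $T^n_e$, any $(i-1)$-spanning tree $S$ inside the $i$-clump satisfies $\vol(S)\le K_0$.
\end{quote}
To prove this, I would use Lemma \ref{lm:numberjclumps} to see that $S$ has $O(\lambda)$ leaves, and Proposition \ref{prop:geo-vp} together with Remark \ref{rm:union-i-vp} to conclude that $S$ lies in the union of the lifts of $\gamma_0$ that arise in the special-vanishing-path decompositions of the vanishing paths between its leaves. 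Since each lift has length $L_0$ and $S$ is a tree whose combinatorial complexity is controlled by its number of leaves, a single cover of $S$ by $O(\lambda)$ lifts of $\gamma_0$ can be extracted, giving $\vol(S)=O(\lambda L_0)=K_0$.

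Granted the claim, I would sum level by level: $T^n_e$, viewed as the unique $n$-clump, decomposes as the union over $i=1,\dots,n$ of the $(i-1)$-spanning trees sitting inside each $i$-clump. By Lemma \ref{lm:numberjclumps} there are $\sim\lambda^{n-i}$ many $i$-clumps at level $i$, each contributing at most $K_0$, so
\[
\vol(T^n_e)\;\le\;\sum_{i=1}^{n}(\text{number of $i$-clumps})\cdot K_0\;\le\;K_1\sum_{i=1}^{n}\lambda^{n-i}\;=\;O(\lambda^n),
\]
which combined with the lower bound and Proposition \ref{prop:estimate} yields $i(T',T''\phi^n)\sim\lambda^n$.

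The hard part will be the claim, specifically the passage from ``each vanishing path between two adjacent leaves of $S$ is a tightened concatenation of special vanishing paths'' to ``$S$ itself is covered by $O(\#\text{leaves})$ lifts of $\gamma_0$''. Individual vanishing paths can, a priori, involve many special vanishing paths in their decomposition, so one cannot bound $\vol(S)$ by summing over adjacent pairs of leaves. Instead I expect to argue globally: since $\sigma$ has a unique illegal turn and $S$ is a vanishing tree, the illegal turns of $S$ correspond to lifts of the illegal turn of $\gamma_0$, and the tightening in Proposition \ref{prop:geo-vp} forces two lifts of $\gamma_0$ sharing a subsegment in $S$ to be organized coherently along the Nielsen-path structure. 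Using convexity of fibres (Proposition \ref{prop:convex}) to prevent overlapping lifts from creating spurious branching, one can then extract a cover of $S$ by lifts of $\gamma_0$ whose cardinality is controlled by the $O(\lambda)$ leaves of $S$, completing the proof.
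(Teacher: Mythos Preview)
Your route through the clump hierarchy is more elaborate than the paper's. The paper argues directly: once you have a parageometric representative, Proposition~\ref{prop:geo-vp} says $T^n_e$ is covered by special vanishing paths (each of length at most $L_0$), and then it asserts that $\sim\lambda^n$ such paths suffice, so $\vol(T^n_e)=O(\lambda^n)$. There is no level-by-level summation; your decomposition into $(i-1)$--spanning trees inside $i$--clumps is extra scaffolding that does not sidestep the real issue.

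That real issue is exactly what you flagged as the ``hard part'', and your sketch does not close it. From Proposition~\ref{prop:geo-vp} you only know that each $i$--vanishing path between leaves of $S$ is a \emph{tightened} concatenation $[\gamma_1\cdots\gamma_\ell]$ of special vanishing paths; the inductive proof of that proposition gives no control on $\ell$ (it doubles at each step of depth $\delta$, so a priori $\ell$ can be exponential in $i$). Hence ``$S$ is covered by lifts of $\gamma_0$'' is true but useless without an independent bound on the number of lifts, and nothing in your outline supplies one. Your appeal to Proposition~\ref{prop:convex} is misplaced: that proposition concerns fibers of the Guirardel core $\Core(T\times T')$, not the tree $T^n_e$, and it has no evident bearing on how lifts of $\gamma_0$ overlap inside $T$. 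Likewise ``the tightening in Proposition~\ref{prop:geo-vp} forces two lifts of $\gamma_0$ sharing a subsegment in $S$ to be organized coherently'' is an assertion, not an argument.

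So the proposal as written has a genuine gap precisely at the step you yourself singled out. If you want to make the clump approach work, you need a concrete reason why an $(i-1)$--spanning tree in an $i$--clump has volume bounded independently of $i$; alternatively, drop the hierarchy and argue globally, as the paper does, that $T^n_e$ is covered by $\sim\lambda^n$ special vanishing paths by relating the count to $|(f^n)^{-1}(p_e)|\sim\lambda^n$ via Lemma~\ref{lm:numberjclumps}.
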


\begin{proof}
  First suppose that $\phi$ is represented by a parageometric
  train-track map $\sigma \co \Gamma \to \Gamma$.  This lifts to a map
  $f\co T \to T$.  By Proposition \ref{prop:estimate} $i(T',T''\phi^n)
  \sim \vol(T^n_e)$ where $T^n_e \subset T$ is the subtree spanned by
  the points in $(f^n)\inv(p_e)$ for any edge $e \subset T$.  Then
  $T^n_e$ as is a union of $i$--vanishing paths for $i \leq n$ and by
  Proposition \ref{prop:geo-vp} it is also covered special vanishing
  paths.  As in Lemma \ref{lm:numberjclumps} the number of special
  vanishing paths needed to cover $T^n_e$ is $\sim \lambda^n$.

  If $\phi$ is not represented by a parageometric train-track map,
  then by Proposition \ref{prop:parageo-map} some power $\phi^\ell$
  is.  Then for any $i$ we have $i(T',T''\phi^{i + n\ell}) =
  i(T',(T''\phi^{i})\phi^{n\ell}) \sim (\lambda^{\ell})^n \sim
  \lambda^{i + n\ell}$, hence $i(T',T''\phi^n) \sim \lambda^n$.
\end{proof}

Combining Theorems \ref{th:nongeo} and \ref{th:geo} we get Theorem
\ref{th:growth} from the Introduction.  We conclude with an example of
a parageometric automorphism, illustrating the difference between the
length of vanishing paths in the geometric direction and nongeometric
direction.

\begin{example}\label{ex:geom-nongeom}
  In this example we present subtrees $T^n_a$ for the following fully
  irreducible automorphisms:
  \begin{equation*}
  \begin{array}{rclcrcl} 
    a & \mapsto & ac & \quad & a & \mapsto & b \\
    \phi\co b & \mapsto & a & \quad & \psi\co b & \mapsto & c \\
    c & \mapsto & b & \quad & c & \mapsto & ab
  \end{array}
  \end{equation*}
  As in Example \ref{ex:algorithm} we let $T$ be the universal cover
  of $R_3$, the $3$--rose marked with petals labeled $a,b,c$ and let
  $f_\phi\co T \to T$ and $f_\psi \co T \to T$ denote the lift of the
  obvious homotopy equivalences of $R_3$ representing $\phi$ and
  $\psi$ respectively.  Figure \ref{fig:parageom+} shows
  $(f_\phi^n)\inv(p_a)$ and Figure \ref{fig:nongeom+} shows
  $(f_\psi^n)\inv(p_a)$ for $n = 6,7,8,9,10$.  Notice how the points
  in $(f^n)\inv(p_a)$ stay uniformly close together (since the stable
  tree for $\phi$ is geometric) where as in $(f_\psi^n)\inv(p_a)$ they
  start to clump together (since the stable tree for $\psi$ is
  nongeometric).

  \begin{figure}[p]
    \centering
    \includegraphics{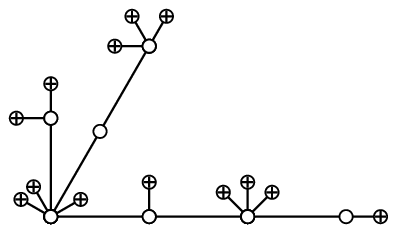}
    \includegraphics{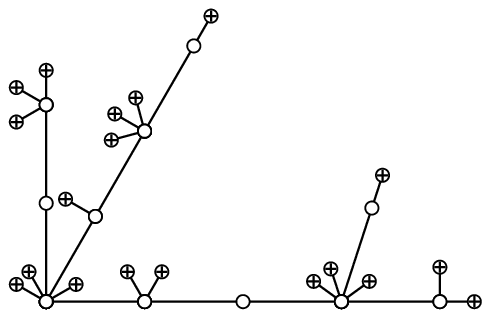}
    \includegraphics{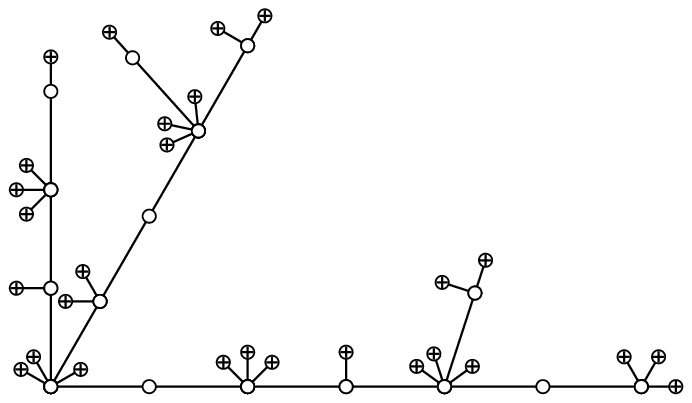}
    \includegraphics{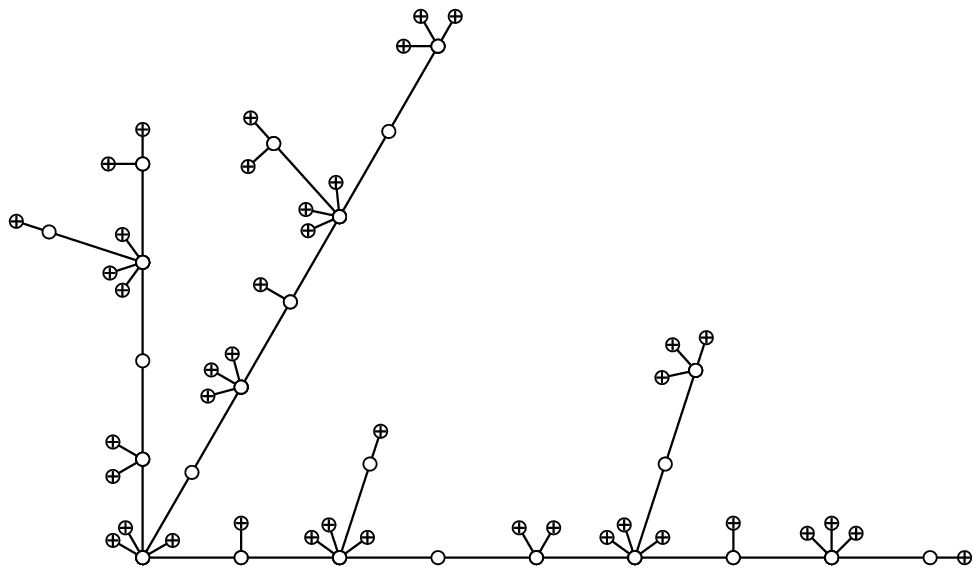}
    \includegraphics{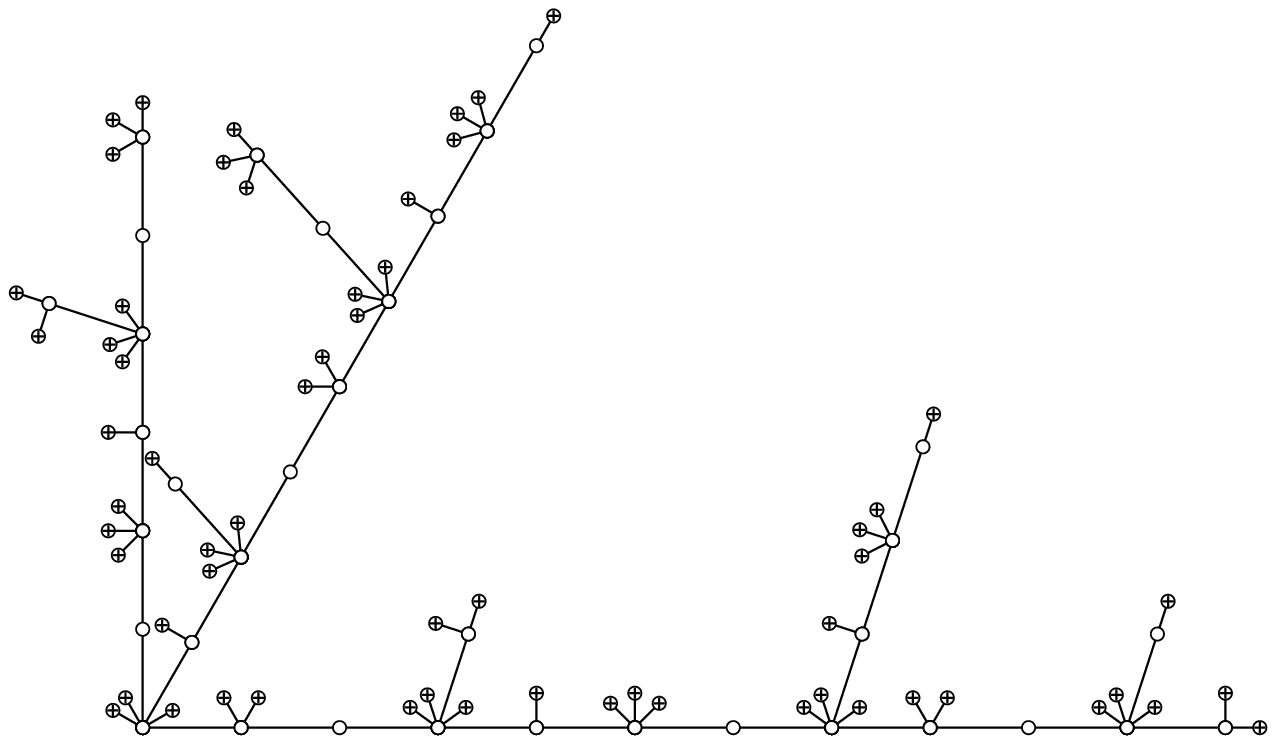}
    \caption{The trees $T^n_a$ in Example \ref{ex:geom-nongeom} for
      the automorphism with geometric stable
      tree.}\label{fig:parageom+}
  \end{figure}

  \begin{figure}[h]
    \centering
    \includegraphics{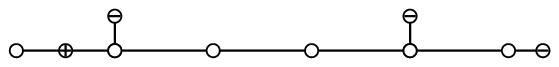}
    \includegraphics{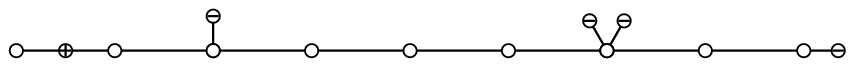}
    \includegraphics{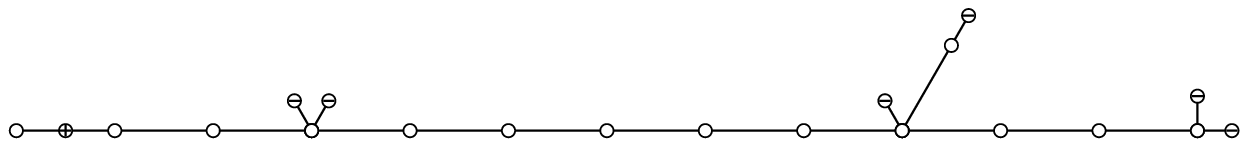}
    \includegraphics[width=5in]{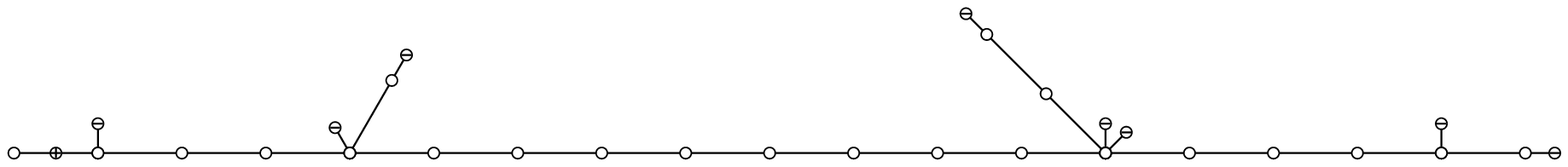}
    \includegraphics[width=5in]{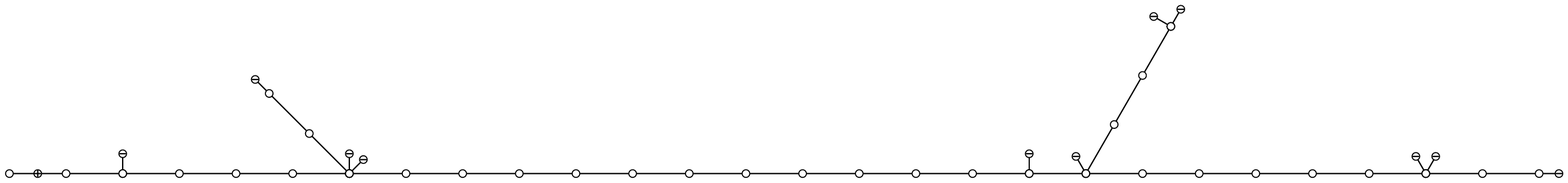}
    \caption{The trees $T^n_a$ in Example \ref{ex:geom-nongeom} for
      the automorphism with nongeometric stable
      tree.}\label{fig:nongeom+}
  \end{figure}
\end{example}


\bibliography{bibliography}

\bibliographystyle{siam}

\end{document}